 \newtheorem{thm}{Theorem}[section]
 \newtheorem{cor}[thm]{Corollary}
 \newtheorem{lem}[thm]{Lemma}
 \newtheorem{prop}[thm]{Proposition}
 \newtheorem{claim}[thm]{Claim}
 \theoremstyle{definition}
 \newtheorem{defn}[thm]{Definition}
 \newtheorem{rem}[thm]{Remark}
 \newtheorem*{propa}{Proposition 8.6}
 \numberwithin{equation}{section}
 \newcommand{\A}{\mathcal{A}}
  \newcommand{\DX}{{\rm Hom}(\Omega^{spin}_3(X), \Ar/\Zee)}
  \newcommand{\rz}{\mathbb{R}/\mathbb{Z}}
  \newcommand{\oo}{\overline}
\def\ZZ{{\mathbb Z}}
\def\RR{{\mathbb R}}
\def\D{{\bold D}}
\def\Zee{\mathbb{Z}}
\def\Z{{\mathbb Z}}
\def\Ar{\mathbb{R}}
\def\R{\mathbb{R}}
\def\Cee{\mathbb{C}}
\def\C{\mathbf{C}}
\def\cP{{\mathcal P}}
\def\ZZ{{\mathbb Z}}
\def\RR{{\mathbb R}}
\def\be{\begin{equation}}
\def\ee{\end{equation}}
\def\ba{\begin{eqneqnarray}}
\def\ea{\end{eqneqnarray}}
\def\e1{\epsilon}
\def\AAl{\mathcal{A}_{\lambda}}
\def\A0{\stackrel{\circ}{\AAl}}
\def\o1{\omega}
\def\01{\Omega}
\def\c1{\gamma}
\def\g1{\Sigma}
\def\l1{\Lambda}
\def\v1{\varphi}
\def\d1{d}
\def\f1{\frac}
\def\t1{\theta}
\def\b1{\beta}
\def\bar{\overline}
\def\bs{\begin{eqneqnarray*}}
\def\es{\end{eqneqnarray*}}
\def\m1{\Theta}
\def\w1{\wedge}
\title{The Pontrjagin Dual of $3$-Dimensional Spin Bordism}
\author{Greg Brumfiel and John Morgan}
\begin{document}
\maketitle

\section{Introduction}\label{sect1}

\subsection{Background and Statement of the Main Result}\label{sect1.1}
Motivated by his work with Gaiotto on condensed matter physics \cite{GK}, Anton Kapustin asked us
if we could compute ${\rm Hom}(\Omega^{spin}_3(B\Gamma),\Ar/\Zee)$, the Pontrjagin dual of the $3$-dimensional Spin bordism group of $B\Gamma$
for $\Gamma$ a finite group. He proposed that the group should be described in terms of triples of cochains
$(w,p,a)$, with $w\in C^3(B\Gamma;\Ar/\Zee)$, $p$ a cocycle in $Z^2(B\Gamma;\Zee/2)$ and $a$ a cocycle in $Z^1(B\Gamma;\Zee/2)$, satisfying the equation
$$dw+\frac{1}{2}p^2+\frac{1}{4}{\mathcal P}(a^2)=0.$$
Here, the function $\cP$ is the Pontrjagin square. On the cochain level, if $c$ is a $\ZZ/2$ cocycle the Pontrjagin square $\cP(c)$ is the $\ZZ/4$ cocycle given by $ C \cup C + C \cup_1 dC$, where $C$ is a canonical $\ZZ$ cochain lifting $c$ (taking only values 0 and 1 on simplices) and $\cup_1$ is the first `higher order cup product' of Steenrod. The $1/2$ and the $1/4$ in Kapustin's equation mean the maps on cochains induced by the obvious coefficient morphisms $\Z/2 \to \R/\Z$ and $\Z/4 \to \rz$.  \\

The answer we give works for any space $X$ though we find it convenient to present our work in a slightly  different form.  Namely, instead of the triples suggested by Kapustin we work with triples $(w,p,a)$ satisfying the simpler equation
$$dw+\frac{1}{2}p^2=0.$$ 
Let us give a brief discussion of the difference between Kapustin's original suggested equation and our equation.
The point is that ${\mathcal P}(a^2)$ is a $\Zee/4$-cocycle that is exact as an $\Ar/\Zee$ cocycle. In fact there is a natural way to make it exact. Namely, given a $\Zee/2$ cocycle $a$ we let $A\in C^1(X;\Zee)$ be the lift of $a$
that takes values only $0$ and $1$ on the basis of singular 1-simplices. Then $dA=2A^2$ so that $A^2$ is a canonical integral $cocycle$ lifting $a^2$.
Hence ${\mathcal P}(a^2)=A^4$, reduced mod 4. Thus, letting $1/n$  also denote the cochain maps induced by $\Z \to \rz$ with $1 \mapsto 1/n$, we have
$$d\bigl(\frac{1}{8}A^3\bigr)=\frac{1}{4}A^4=\frac{1}{4}{\mathcal P}(a^2).$$
This allows us to transform triples $(w,p,a)$ which satisfy our equation to triples
$(w-A^3/8,p,a)$ satisfying Kapustin's equation. Thus, our discussion can be transported in its entirety into
that context.\\

Here are our main results.

\begin{thm}\label{1.1} Fix a space $X$.
For any abelian group $M$ let
$C^*(X; M)$ and $Z^*(X; M)$ denote the singular cochains and singular cocycles with values in $M$.

Let
$${\bf C}(X)=\bigl(C^3(X;\Ar/\Zee)/dC^2(X;\Ar/\Zee)\bigr)\times Z^2(X;\Zee/2)\times Z^1(X;\Zee/2)$$
with its natural compact topology. (See Section~\ref{sect1.2}.)
\begin{enumerate}
\item There is a continuous multiplication on ${\bf C}(X)$ defined by  $$(w, p, a) \centerdot (v, q, b) = (u, p+q+ab, a+b),\ \ \     where$$  $$u =  w+v+\frac{1}{2}\Big[ p\cup_1q+(p+q)\cup_1(ab)+a(a\cup_1b)b\Big] + \frac{1}{4}AB^2,$$ making ${\bf C}(X)$  a (non-abelian) compact group. (See Proposition~\ref{3.8}.)  ($A$ and $B$ are the canonical integral lifts of $a$ and $b$ discussed in the previous paragraph.)
\item The function 
$${\bf D}\colon {\bf C}(X)\to C^4(X;\Ar/\Zee)$$
defined by
$${\bf D}(w,p,a)=dw+\frac{1}{2}p^2$$
is a continuous homomorphism. (See Proposition~\ref{3.2}.)
\item Define ${\bf C'}(X)=C^1(X;\Zee/2)\times C^0(X;\Zee/2)$ with multiplication
$$(t,x)\cdot (s,y)=(t+s+xdy,x+y).$$
This is a compact group and the map
$${\bf D'}\colon {\bf C'}(X)\to {\bf C}(X)$$ given by
$${\bf D'}(t,x)=((1/2)tdt,dt,dx)$$
is a continuous homomorphism whose image is a normal subgroup of ${\bf C}(X)$ containing the commutator subgroup. (See Propositions~\ref{3.7} and~\ref{3.10}.)
\item  Furthermore, ${\bf D}\circ {\bf D'}=0.$ (See Claim~\ref{DD'claim}.)
\end{enumerate}
We define
$$G(X)={\rm Ker}({\bf D})/{\rm Im}({\bf D'}).$$
Then $G$ is a functor from the homotopy category to the category of compact abelian groups. (See Corollary~\ref{homfunct}.)
\end{thm}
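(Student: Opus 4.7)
My plan is to deduce the corollary from the four structural properties asserted in Theorem~\ref{1.1}, together with the standard naturality and homotopy properties of singular cochains. First I verify that $G(X) = {\rm Ker}({\bf D})/{\rm Im}({\bf D}')$ is a well-defined compact abelian topological group. Part~(4) gives ${\rm Im}({\bf D}') \subseteq {\rm Ker}({\bf D})$, and part~(3) asserts that ${\rm Im}({\bf D}')$ is normal in ${\bf C}(X)$ and contains the commutator subgroup, so it is normal in ${\rm Ker}({\bf D})$ and the quotient is abelian. Compactness follows because ${\rm Ker}({\bf D})$ is closed in the compact group ${\bf C}(X)$ (preimage of $\{0\}$ under a continuous homomorphism into a Hausdorff group) and ${\rm Im}({\bf D}')$ is the continuous image of the compact group ${\bf C}'(X)$, hence compact and closed.

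Next, I make $G$ into a contravariant functor on the honest category of continuous maps. A map $f \colon X \to Y$ induces a chain map $f_\# \colon C_*(X) \to C_*(Y)$ on singular chains, and dualization gives $f^* \colon C^*(Y;M) \to C^*(X;M)$ for $M = \Zee/2$ and $M = \Ar/\Zee$. The cup product, the Steenrod $\cup_1$ product, and the canonical integral lift of a $\Zee/2$ cochain (which takes value $0$ or $1$ on each basis singular simplex) are all defined simplex-by-simplex and so are preserved by $f^*$. Hence $f^*(w,p,a) = (f^*w, f^*p, f^*a)$ defines a continuous group homomorphism ${\bf C}(Y) \to {\bf C}(X)$ that commutes with both ${\bf D}$ and ${\bf D}'$, and therefore descends to a continuous homomorphism $f^* \colon G(Y) \to G(X)$; strict functoriality $(g \circ f)^* = f^* \circ g^*$ is inherited from the corresponding identity on cochains.

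The main obstacle is homotopy invariance. Given a homotopy from $f$ to $g$, the Eilenberg--MacLane prism construction produces a cochain homotopy $h \colon C^n(Y;M) \to C^{n-1}(X;M)$ with $g^* - f^* = dh + hd$. For a representative $(w,p,a) \in {\rm Ker}({\bf D}) \subseteq {\bf C}(Y)$ I would set $t = h(p) \in C^1(X;\Zee/2)$ and $x = h(a) \in C^0(X;\Zee/2)$, so that $dt = g^*p - f^*p$ and $dx = g^*a - f^*a$ since $p$ and $a$ are cocycles. I would then establish the identity
$$g^*(w,p,a) \;\equiv\; f^*(w,p,a) \centerdot {\bf D}'(t,x) \pmod{{\rm Im}({\bf D}')},$$
which implies that $f^*$ and $g^*$ induce the same map $G(Y) \to G(X)$. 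The $a$- and $p$-components of this identity follow directly from the definitions of $t$ and $x$, up to cross terms from the non-abelian product law that are absorbed into ${\rm Im}({\bf D}')$. The delicate part is the $w$-component: here the correction $\tfrac12[p\cup_1 q + \cdots] + \tfrac14 AB^2$ in the product law of ${\bf C}(X)$ must combine with Steenrod's coboundary formula $d(c \cup_1 c') = c \cup c' + c' \cup c + dc \cup_1 c' + c \cup_1 dc'$ to reproduce precisely the term $\tfrac12\, t\,dt$ appearing in ${\bf D}'(t,x)$, while the cubical term $\tfrac14 AB^2$ accounts for the discrepancy between the canonical integral lifts of $f^*a$ and $g^*a$. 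Once this cochain-level identity is verified, $G$ descends to a functor on the homotopy category, and continuity of the induced maps is automatic from continuity of $f^*$.
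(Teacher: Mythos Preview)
Your first two paragraphs --- that $G(X)$ is a well-defined compact abelian group and that $G$ is a contravariant functor on honest continuous maps --- are correct and agree with the paper's treatment (Section~5, Proposition~\ref{functorial}).

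For homotopy invariance, however, you take a different route from the paper, and your sketch has a gap. You propose to set $t=h(p)$, $x=h(a)$ with $h$ the prism operator and verify $g^*(w,p,a)\equiv f^*(w,p,a)\centerdot\mathbf{D}'(t,x)$. But already in the second coordinate the product $f^*(w,p,a)\centerdot\mathbf{D}'(t,x)$ gives $f^*p+dt+(f^*a)(dx)$, not $g^*p=f^*p+dt$; the cross term $(f^*a)(dx)$ does not vanish and must itself be corrected by a further element of ${\rm Im}(\mathbf{D}')$. After such corrections the first-coordinate identity you describe is far from a routine application of the $\cup_1$ coboundary formula --- the paper handles analogous first-coordinate verifications (associativity in Proposition~\ref{3.8}, and the special case $(0,0,a)\equiv(0,0,a+dx)$ in Proposition~\ref{4.6}) not by explicit calculation but by invoking its key Lemma~\ref{key} on natural cochain operations. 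Your outline does not supply either that lemma or an explicit computation, so the ``delicate part'' is genuinely missing.

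The paper's argument for homotopy invariance (Propositions~\ref{5.2}--\ref{5.3}) sidesteps all of this. It first establishes the filtration $G\supset G^1\supset G^2\supset 0$ (Section~\ref{sect4}) with successive quotients $H^1(X;\Zee/2)$, $SH^2(X;\Zee/2)$, $H^3(X;\Ar/\Zee)$. Any map inducing cohomology isomorphisms in the relevant range therefore induces an isomorphism on $G$ by the 5-lemma. Homotopy invariance then follows from the standard trick: the inclusions $i_0,i_1\colon X\to X\times I$ and projection $p\colon X\times I\to X$ all induce isomorphisms on $G$, and $i_t^*\circ p^*={\rm id}$ forces $i_0^*=i_1^*$, hence $f_0^*=f_1^*$ for any homotopy $F\colon X\times I\to Y$. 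This never touches the prism operator or the explicit multiplication formula in $\mathbf{C}(X)$, which is what makes it clean.
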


We define an explicit $\Ar/\Zee$-valued pairing  between elements $(w,p,a)\in {\rm Ker}\,  {\bf D}$ and maps $f\colon M\to X$ where $M$ is a closed Spin $3$-manifold and $f$ is continuous. The value of the pairing depends only the equivalence class of $(w,p,a)$ in $G(X)$ and on the Spin bordism class of $f\colon M \to X$. This leads to the following result.\\

\begin{thm}\label{1.2}
There is an explicit continuous bilinear pairing
$$G(X)\times \Omega_3^{Spin}(X)\to \Ar/\Zee$$
whose adjoint maps each group isomorphically (and homeomorphically) to the Pontrjagin dual of the other.
Thus, the compact abelian groups $G(X)$ and ${\rm Hom}(\Omega_3^{spin}(X),\Ar/\Zee)$ are explicitly identified.
\end{thm}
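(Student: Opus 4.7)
The plan is to (1) construct the pairing explicitly on cochains using the Spin structure, (2) show it descends to a well-defined map $G(X) \times \Omega_3^{Spin}(X) \to \R/\Z$, and (3) prove the adjoint is an isomorphism of compact abelian groups by matching it against the Atiyah--Hirzebruch filtration. For step (1), given $(w,p,a) \in {\rm Ker}({\bf D})$ and a continuous $f\colon M \to X$ with $M$ a closed Spin $3$-manifold, I pull back to obtain cochains on $M$ satisfying $d(f^*w) = -\frac{1}{2}(f^*p)^2$. The Spin structure on $M$ will produce a canonical $\R/\Z$-valued $3$-cochain $\eta = \eta_M(f^*p,f^*a)$ with $d\eta = \frac{1}{2}(f^*p)^2$, so that $f^*w + \eta$ is an $\R/\Z$-valued $3$-cocycle on $M$, and I set the pairing to be $\int_M (f^*w + \eta) \in \R/\Z$. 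Producing this $\eta_M$ with the correct $a$-dependence, encoding an Arf/Brown-type refinement coming from the Spin structure, is the conceptual core of the construction and dictates the form of the algebraic structure in Theorem~\ref{1.1}.

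Next I verify well-definedness. Invariance under changing $w$ by a coboundary in $C^2(X;\R/\Z)$ is automatic since $\int_M d\alpha = 0$ on a closed $3$-manifold. Vanishing on ${\rm Im}({\bf D'})$ is a direct cochain check: substituting $(w,p,a) = {\bf D'}(t,x) = ((1/2)t\,dt,\, dt,\, dx)$, the factor $(1/2)t\,dt$ in $w$ combines with $\eta_M(dt,dx)$ to become an explicit cochain coboundary on $M$ that integrates to zero. Spin-bordism invariance is the main obstacle: given a Spin $4$-manifold $W$ with $\partial W = M$ and an extension $\tilde f\colon W \to X$ of $f$, Stokes' theorem reduces the change in the pairing to $\int_W \tilde f^*({\bf D}(w,p,a)) = 0$ plus a correction coming from extending $\eta$ across $W$. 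I expect this correction to vanish via the Wu formula: $v_2(W) = 0$ for a Spin $4$-manifold, so $\mathrm{Sq}^2$ evaluates trivially on $H^2(W;\Z/2)$, which is exactly what is needed to trivialize $\frac{1}{2}(\tilde f^*p)^2$ at the cochain level through the boundary.

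For step (3), bilinearity of the pairing follows from the multiplication on ${\bf C}(X)$: the $\cup_1$ and $\frac{1}{4}AB^2$ correction terms in Theorem~\ref{1.1}(1) are set up precisely so that the cochain pairing is bilinear once one quotients by ${\rm Im}({\bf D'})$, and continuity is immediate from the definitions. To identify $G(X)$ with the Pontrjagin dual of $\Omega_3^{Spin}(X)$ I compare filtrations. The Atiyah--Hirzebruch spectral sequence with $E_2^{p,q}=H_p(X;\Omega_q^{Spin}({\rm pt}))$, using $\Omega_q^{Spin}({\rm pt}) = \Z,\Z/2,\Z/2,0$ for $q=0,1,2,3$, provides a three-step filtration on $\Omega_3^{Spin}(X)$ with graded pieces inside $H_3(X;\Z)$, $H_2(X;\Z/2)$, and $H_1(X;\Z/2)$. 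On the $G(X)$ side, the projections $(w,p,a)\mapsto a$ and $(w,p,0)\mapsto p$ produce a dual filtration with graded pieces in $H^1(X;\Z/2)$, $H^2(X;\Z/2)$, and $H^3(X;\R/\Z)$. I will check that the pairing respects both filtrations and that on each associated graded piece it recovers the standard Pontrjagin duality between $H_i(X;\Z/2)$ (resp.\ $H_3(X;\Z)$) and $H^i(X;\Z/2)$ (resp.\ $H^3(X;\R/\Z)$); a five-lemma argument then yields the isomorphism of abstract groups, and the homeomorphism statement follows because both sides carry compatible compact topologies refining the filtration quotients. The delicate step will be matching the $d_2$ and $d_3$ differentials of the AHSS (given by $\mathrm{Sq}^2$ and secondary operations) with the relations imposed by ${\bf D}$ and ${\rm Im}({\bf D'})$ --- both of which ultimately arise from the Spin condition $v_2=0$ already exploited in step (2).
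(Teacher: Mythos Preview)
Your outline captures the filtration endgame correctly, and that part matches the paper: both sides carry three-step filtrations with associated gradeds $H^3(X;\R/\Z)$, $SH^2(X;\Z/2)$, $H^1(X;\Z/2)$, and once the pairing is shown to be bilinear and filtration-compatible the five-lemma finishes the job. But there are two genuine gaps in your construction and in your additivity argument.

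First, the cochain $\eta_M(f^*p,f^*a)$ you posit does not exist in the form you describe. The paper does \emph{not} produce a canonical $\R/\Z$-valued $3$-cochain on $M$ with $d\eta = \tfrac{1}{2}(f^*p)^2$ from the Spin structure. Instead it splits the evaluation into two pieces of quite different character: a term $\rho^1(w,p,0)$ computed by choosing a Spin $4$-manifold $W$ with $\partial W = M$, extending $f^*p$ to a cocycle $P$ on $W$, and taking $\tfrac{1}{2}\int_W P^2 + \int_M f^*w$; and a term $\lambda^1(a)$ given by the $\Z/8$-valued Brown--Arf invariant of the $Pin^-$ surface Poincar\'e dual to $f^*a$. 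Neither of these is the integral of a local cochain on $M$. Your Wu-formula remark is morally the reason $\rho^1$ is well defined (the parity of $\int_W P^2$ is independent of $W$ because closed Spin $4$-manifolds have even intersection form), but it does not give you a cochain primitive on $M$ itself.

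Second, and more seriously, your claim that bilinearity in the $G(X)$ variable ``follows from the multiplication on $\mathbf{C}(X)$'' inverts the logical structure of the argument. The product formula in Theorem~\ref{1.1}(1), and in particular the $\tfrac{1}{4}AB^2$ term, was \emph{designed} to match the non-additivity of the Arf invariant: one must prove that
\[
\mathrm{Arf}(M,\alpha)+\mathrm{Arf}(M,\beta)-\mathrm{Arf}(M,\alpha+\beta)
= \rho^1\bigl(\tfrac{1}{2}a(a\cup_1 b)b + \tfrac{1}{4}AB^2,\ ab,\ 0\bigr)(M,f).
\]
This is the compatibility condition of Proposition~\ref{4.8}, and it is the hardest step in the paper. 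Naturality arguments pin $\varphi(a,b)=\mathrm{Arf}(\alpha)+\mathrm{Arf}(\beta)-\mathrm{Arf}(\alpha+\beta)$ down to one of two possibilities differing by the sign of the $\tfrac{1}{4}AB^2$ term, and resolving that sign requires an explicit combinatorial computation on an ordered triangulation of $T_S S^2 = \R P^3$ with its bounding Spin structure (Proposition~\ref{3.6}, carried out in \S\ref{A.3}). Without this computation you do not know that your pairing is additive in the first variable, and the entire isomorphism argument collapses. There is no purely formal or Wu-formula route around this step; the paper explicitly flags it (Remark~\ref{3.3}) as the point where the choice of right-hand versus left-hand twist convention in the Arf invariant is matched to the sign in the algebraic product.
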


Let us describe the pairing in Theorem~\ref{1.2} in a special case. Given $(w,p,a)$ representing an element of $G(X)$ and a Spin bordism element $f\colon M\to X$, suppose that $f^*p$ is of the form $\varphi^*u$ where 
$\varphi$ is a  map from  $M$ to $S^2$ and $u$ is a reduced singular cocycle (reduced in the sense of vanishes on degenerate singular simplices) on $S^2$ representing the generating cohomology class. 
Then the value of $(w,p,a)$ on $(M,f)$ is
$$\Big(\int_{[M]} f^*w\Big) \ +\ (1/2)[Spin(Z)] \ +\ (1/8){\rm Arf}(\Sigma_{f^*a})   \in \RR / \ZZ.$$
Here, $[M]$ is the fundamental cycle of a triangulation of $M$, and $Z$ is the preimage under $\varphi$ of a regular value of $\varphi$. The Spin structure on $M$ and the trivialization of the normal bundle of $Z\subset M$ induced by $\varphi$  produce a Spin structure on $Z$ and 
$[Spin(Z)]$ is the bordism class of this element, viewed as an element of $\Zee/2$. Finally,
the surface $\Sigma_{f^*a} \subset M$ is a Poincar\'e dual to $f^*a$. It inherits a $Pin^-$ structure from the Spin structure on $M$ and ${\rm Arf}(\Sigma_{f^*a})\in \Zee/8$ is the Arf invariant of a quadratic form on $H^1(\Sigma_{f^*a};\Zee/2)$ related to that structure. This Arf invariant  records the bordism class of $
\Sigma$ in $2$-dimensional $Pin^-$ bordism, a bordism group isomorphic to $\Zee/8$.\\

The most complicated parts of the argument establishing Theorems~\ref{1.1} and~\ref{1.2} are (i) showing that the product we define on ${\bf C}(X)$ is a group structure and induces an abelian group structure on $G(X)$   and (ii) showing that the pairing we define is additive in the $G(X)$-variable.
Once these are established, the proof that the pairing passes to Spin bordism and that it induces an isomorphism of Pontrjagin duals is straight-forward using natural filtrations of $G(X)$ and ${\rm Hom}(\Omega_3^{spin}(X),\Ar/\Zee)$. (See  \S \ref{sect2}). \\

It is apparent in the statement of Theorem \ref{1.1}, and even more so in the proof, that our cochain complexes must have enough structure that cup products and $\text{cup}_i$ products are defined. This is certainly the case for singular complexes, but also for simplicial complexes with vertex partial orderings so that vertices of each simplex are totally ordered. We will call such structures $ordered$ $simplicial$ $complexes$.    For example, the barycentric subdivision of any simplicial complex has this structure.  Or, one might just have a total ordering of the set of vertices. Ordered simplicial complexes form a category where the $ordered$ $simplicial$ $maps$ are those that are non-decreasing in the vertex order on simplices. 
One reason for emphasizing these other cochain complexes is that for finite complexes $X$ our group $G(X)$ is finitely computable, just as the ordinary cohomology groups of a space, along with cup products, are finitely computable given a finite triangulation.  Namely, in \S5  we prove that for ordered simplicial complexes, the groups $G$ defined using singular cochains  are naturally isomorphic to the groups defined in the same way using ordered simplicial cochains.\\

There is significant overlap of our results presented here and the mathematical parts of \cite{GK}. We thank Kapustin for arousing our interest in these questions and for many helpful conversations during the course of this work.

\subsection{Remarks on Compact Topologies}\label{sect1.2}

Let $A$ be a compact abelian topological group.  Chain groups $C_k(X, \Z)$ are always free abelian, with basis a  set of `$k$-cells'.  The cochain group $C^k(X, A) = Hom(C_k(X, \Z), A)$ is thus the direct product of copies of $A$, indexed by these $k$-cells.  As such, the cochain group is a compact abelian group.  The product topology is the weakest topology  so that the projections onto the individual factors $A$ are continuous. The differentials $d\colon C^k(X, A) \to C^{k+1}(X, A)$ are therefore continuous, since  the boundary of each $k+1$-cell involves only finitely many $k$-cells.  Thus the cocycles $Z^k(X, A)$ and the coboundaries $B^k(X, A)$ are compact groups, as are the cohomology groups $H^k(X, A)$. \\

Operations on cochains such as cup products and $\text{cup}_i$ products are continuous, when defined, since these operations are also duals of chain operations given by finite formulas on cells.   It follows that groups built from continuous operations on finite products of compact cochain sets, together with constructions of various subquotients, will all be compact groups. Moreover, whenever these constructions are functorial in $X$ at the cochain level, the maps induced by maps of spaces will also be continuous.  For example, ordered simplicial maps $X \to Y$  induce continuous cochain maps $C^k(Y, A) \to C^k(X, A)$, as well as continuous maps of cocycle groups, coboundary groups, cohomology groups, and other groups constructed from operations such as cochain products.\\

Throughout our work, all the cochain groups we consider will have these compact topologies, and all the constructions we make are continuous and keep us in the world of functors from complexes to compact sets and groups.  In particular, these considerations explain the compact topology on our group $G(X)$. Since all this is quite routine, pretty much involving nothing more than the comments in the two paragraphs above, we will not emphasize continuity and the compact topologies in most statements about $G(X)$.  Our view is that the algebra  we work out is somewhat complicated, and there is no need to clutter every statement about $G(X)$ with the topological aspects. Also, for finite complexes $X$, these topological considerations are almost trivial, as no topological groups are encountered beyond finite products of finite groups and $\R/ \Z$. In fact, for spaces with $\it{finite}$ reduced homology groups in low dimensions, such as $X = B\Gamma$ with $\Gamma$ a finite group, the groups $G(X)$ and $\Omega_3^{spin}(X)$ are also finite, so the topological content of Theorem~\ref{1.2} evaporates.\\

\subsection{Comparison with Homotopy Theoretic Approaches}\label{sect1.3}
While our approach to $\DX$ is direct and ``hands on," there are other methods and results in algebraic topology and homotopy theory that are at least conceptually relevant for the computation of Spin bordism in general, and 3-dimensional Spin bordism in particular. For completeness, we briefly discuss some of these methods. We will discuss the Atiyah-Hirzebruch spectral sequence in Section~\ref{sect2.1} below.  In low dimensions the differentials in this spectral sequence are known, and hence this computes Spin bordism up to the determination of certain group extensions, which are not at all transparent. Our results do determine these extensions in dimension 3. In higher dimensions, the differentials in the Atiyah-Hirzebruch spectral sequence become difficult to compute, as they correspond to various higher order cohomology operations.\\

 Spin bordism is represented by the spectrum $MSpin$ and the reduced Spin bordism $\widetilde{\Omega}^{spin}_j(X)$ is the stable homotopy group $\pi_j(X \wedge MSpin)$.  Since $\Omega^{spin}_3(pt) = 0$, reduced and unreduced Spin bordism coincide in dimension 3. Since the inclusion of the sphere spectrum $S \to MSpin$ is 3-connected, it is easy to see $\widetilde{\Omega}^{spin}_j (X) \simeq \widetilde{\Omega}^{framed}_j(X)$, the reduced framed bordism, for $j\leq 3$. Also, the $K$-theory orientation $MSpin \to bO$ is 8-connected, so $\widetilde{\Omega}^{spin}_j(X) \simeq \widetilde{kO}_j(X)$ for $j \leq 8$.  Here, $kO$ is 0-connected real $K$-theory, represented by the 0-connected cover $bO$ of the $BO$ spectrum. The full spectrum $MSpin$ is a sum of $bO$ and Eilenberg-MacLane spectra $k(\Z/2)$ of various stable dimensions and various  more highly connected covers of $bO$.\\

Classical Adams spectral sequence methods in homotopy theory allow sophisticated computations of  connective $K$-theory for a wide range of dimensions for many spaces $X$.  See for example \cite{Bruner} when $X = B\Gamma$, the classifying space of a finite group. From the known description of $MSpin$, these methods thus also compute Spin bordism in a wide range of dimensions.  But the typical output of these methods is a list of abelian groups. It is often not at all clear how to translate a specific Spin bordism representative $M \to X$ into an element of one of these groups, even in low dimensions. So there is something more constructive about our answer than answers produced by classical algebraic topological methods in stable homotopy theory.  On the other hand, our constructions are quite technical, and we would be hard pressed to extend them to higher dimensions.  Dimensions 1 and 2 are easy enough, and it is plausible that we could carry out a discussion in dimension 4.  But that's probably about it.\\

As mentioned above, our group $G(X)$ will be constructed from equivalence classes of tuples of cocycles and cochains that satisfy certain relations.  Before getting to the complicated details, we want to try to offer a sort of conceptual explanation of what is  happening behind the scenes.\\

The Brown classifying space of an  abelian group-valued homotopy functor is a homotopy commutative and associative H-space.  For an ordinary cohomology group, the classifying space is just an Eilenberg-MacLane space $K(M, m)$.  A simplicial model has as $q$-cells the $M$-valued $m$-cocycles on the standard $q$-simplex. So an explicit simplicial map $X \to K(M,m)$ is just a cocycle on $X$. In the simplicial world, a homotopy between such cocycle maps is interpretable as writing the difference of the cocycles as a coboundary. So one obtains $H^m(X; M) = Z^m(X)/ B^m(X) = [X, K(M,m)]$. That is, homotopy classes of maps identify with cocycles mod coboundaries; i.e., cohomology.\\

Next, consider a classifying space with a two-stage Postnikov tower $E$ with a $k$-invariant
$K(M,m) \to K(N,n)$, realized by an actual map, not just a homotopy class. Now a simplicial map $X \to E$ can be interpreted as a cocycle $X \to K(M,m)$ together with a choice of lifting to $E$, which means the cocycle  determined by the composition $X \to K(M,m) \to K(N,n)$ is written as a coboundary.  The $H$-space structure can be represented by a map $E \times E \to E$, which is more cocycle and coboundary data, as is associativity and commutativity of the $H$-space structure up to homotopy. Homotopy classes of maps $X \to E$ become equivalence classes of collections of cocycles and cochains satisfying  some coboundary relations, and the product operation is described by other cochain and cocycle formulas.\\

For a more complicated Postnikov tower, extending this viewpoint would not be manageable.  But our construction of the Pontrjagin dual of three-dimensional Spin bordism  is exactly this sort of description of homotopy classes of maps to a classifying space which is a relatively simple three-stage Postnikov tower. We will not explicitly deal with the Brown representing space for our functor $G(X)$, but  the comments in these last paragraphs do put the constructions we actually make in a broader context. Strictly speaking, this Brown representing space represents a compact functor, which would add another layer of complexity to the above discussion of  Postnikov systems.\\

Even for a very simple Postnikov tower, the $H$-space structure can exhibit complications.  For example, consider the abelian group valued functor $$ H^2(X; \Z/2) \times H^1(X; \Z/2)\ \text{with product}\ (p,a)(q,b) = (p+q+ab, a+b).$$  Its classifying space $E$ is just $K(\Z/2 , 2) \times K(\Z/2, 1)$, but the H-space product $E\times E\to E$ will include a representative for the cup product of cocycles, $$K(\Z/2, 1) \times K(\Z/2, 1) \to K(\Z/ 2, 2).$$

On the cochain level, the cup product $ab$ is not commutative.  In fact, the $\text{cup}_1$ operation on cocycles is exactly built to satisfy $d(a \cup_1 b) = ab + ba.$ (With $\Z/2$ coefficients, we can ignore $\pm$ signs.)  The operation $\text{cup}_1$ will occur in the cochain formula establishing homotopy commutativity of the H-space structure on $E$.  As a matter of fact, this simple example occurs as part of our construction of $G(X)$, namely it is a description of a natural quotient of $G(X)$.\\

\section{The Atiyah-Hirzebruch Spectral Sequence and Filtrations}\label{sect2}

\subsection{The Spectral Sequence and Filtration of $\Omega_3^{spin}(X)$}\label{sect2.1}
In Theorems~\ref{1.1}  and ~\ref{1.2} we allowed $X$ to be any space.  For some arguments we need $X$ to be a complex. Any space $X$ is weakly homotopy equivalent to the geometric realization of its singular complex.  Weak homotopy equivalences always induce isomorphisms of functors like bordism.  Thus, it is no loss of generality to assume $X$ is a complex of some kind (e.g. simplicial, cell, CW, semi-simplicial.)\\

To establish the isomorphism between $G(X)$ and the Pontrjagin dual of Spin bordism of $X$, we compare natural filtrations.
The $E^2_{i,n-i}$ term of Atiyah-Hirzebruch spectral sequence for $\Omega^{spin}_n(X)$ is identified with the homology group 
$H_i(X;\Omega^{spin}_{n-i})$. The diagonal line in total degree 3 on the $E^2$ page thus has entries $$E^2_{3,0} = H_3(X;\Zee),\  E^2_{2,1} = H_2(X;\Z/2),\  E^2_{1,2} = H_1(X;\Z/2),\  E^2_{0,3} = 0.$$  These coefficient groups correspond to the low dimensional Spin bordism groups, which in degrees $0, 1, 2, 3$ are $\Z, \Z/2, \Z/2, 0$. The $E^\infty$ terms on the degree 3 diagonal are the successive  quotients of an increasing filtration $$0 \subset
F_1(X)\subset F_2(X)\subset F_3(X) =\Omega^{spin}_3(X)$$
where $F_j(X)$ is   ${\rm Im}  \bigl(\Omega^{spin}_3(X^{(j)}) \to\Omega^{spin}_3(X)\bigr)$ where $X^{(j)}$ is the $j$ skeleton of $X$.\\

$\Ar/\Zee$ is injective, and the exact functor ${\rm Hom}(*, \R/\Z)$ converts the Atiyah-Hirzebruch spectral sequence for $\Omega_*^{spin}(X)$ to a cohomology spectral sequence for the Pontrjagin dual of $\Omega_*^{spin}(X)$. In the dual spectral sequence the groups on the degree 3 diagonal on the $E_2$ page are $$E_2^{3,0}=H^3(X;\R/\Z),\  E_2^{2,1}=H^2(X;\Z/2),\  E_2^{1,2}=H^1(X;\Z/2),\  E_2^{0,3}=0.$$ 
There is one differential in this dual spectral sequence involving the degree 3 diagonal, namely, for $[p] \in H^2(X;\Z/2) = E_2^{2,1}$ one has $d_2([p]) = (1/2)[p]^2 \in H^4(X;\R/\Z) = E_2^{4,0}$.  Basically, this differential corresponds to the first $k$-invariant of $MSpin$.  We will denote by $SH^2(X;\Z/2)$ the kernel of this differential.  That is, $SH^2(X;\Z/2)$ consists of those classes $[p] \in H^2(X;\Z/2)$ so that $(1/2)[p]^2 = 0 \in H^4(X;\R/ \Z)$. An equivalent condition is that $[p]^2$ is the $\ZZ/2$ reduction of a $torsion$ integral class in $H^4(X; \ZZ)$. \\

As far as the degree 3 diagonal is concerned $E_3 = E_\infty$. The groups on the $E_\infty$ page occur as successive quotients of the groups in a filtration
$$
Hom(\Omega^{spin}_3(X),\R/\Z) =F^0(X) \supset F^1(X) \supset F^2(X) \supset 0,$$
where $F^i(X)={\rm Ann}(F_i(X))$. Thus
\[ \begin{array}{ccc}
F(X) / F^1(X)  & = & H^1(X; \ZZ / 2)\\
F^1(X)/ F^2(X) & = &SH^2(X; \ZZ /2)\\
 F^2(X) &= &H^3(X; \RR / \ZZ) \end{array}\]

The $E_2$ and $E_3$ pages of the spectral sequence involve cohomology groups, but cochain groups underlie everything, back on the $E_1$ page. So, lurking in $E_1$ are cochains in $C^3(X;\R/\Z), C^2(X;\Z/2)$, and $C^1(X;\Z/2)$.\\

\subsection{The Filtration of $G(X)$}\label{sect2.2}

Here is a description of the natural filtration of $G(X)$.

 \begin{prop}\label{gradeds}\label{2.1}
There is a natural filtration on $G(X)$:
$$G(X)=G^0(X)\supset G^1(X)\supset G^2(X)\supset 0$$
where $G^1(X)$ and $G^2(X)$ are the compact subgroups consisting of all elements of $G(X)$ represented by triples of the form $(w,p,0)$ and $(w,0,0)$, respectively.
Then:
\begin{enumerate}
\item the map $(w,p,a)\mapsto [a]$ defines a natural  isomorphism of compact abelian groups (see Proposition~\ref{4.3})
$$G(X)/G^1(X)\to H^1(X;\Zee/2),$$ 
\item the map $(w,p,0)\mapsto [p]$ defines a natural isomorphism of compact abelian groups (see Proposition~\ref{4.2})
$$G^1(X)/G^2(X)\to SH^2(X;\Zee/2),$$

\item  the map $(w,0,0)\mapsto [w]$ defines a natural isomorphism of compact abelian groups (see Proposition~\ref{4.1})
$$G^2(X)\to H^3(X;\Ar/\Zee).$$
\end{enumerate}
\end{prop}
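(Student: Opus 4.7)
The plan is to define, for each subquotient, a natural candidate homomorphism to the target and then verify it is a bijection. Set $\pi_1:G(X)\to H^1(X;\Zee/2)$ by $(w,p,a)\mapsto [a]$; set $\pi_2:G^1(X)\to SH^2(X;\Zee/2)$ by $(w,p,0)\mapsto [p]$; and set $\pi_3:G^2(X)\to H^3(X;\Ar/\Zee)$ by $(w,0,0)\mapsto [w]$. Naturality in $X$ is automatic from the cochain-level functoriality discussed in Section~\ref{sect1.2}. Each map is a group homomorphism on its relevant domain: the third and second coordinates of the product in ${\bf C}(X)$ are simply $a+b$ and $p+q+ab$ (which becomes $p+q$ on $G^1$), while the $\cup_1$- and $(1/4)AB^2$-corrections in the first coordinate vanish when both $p$'s and $a$'s are zero, so that $(w,0,0)\cdot(v,0,0)=(w+v,0,0)$.

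For well-definedness, use that ${\bf D}'(t,x)=((1/2)tdt,dt,dx)$. The map $\pi_1$ descends to $G(X)$ because ${\bf D}'(t,x)$ alters $a$ only by the exact cochain $dx$. For $\pi_2$, two representatives of a class in $G^1$ must both have the form $(w,p,0)$, hence differ by a ${\bf D}'(t,x)$ with $dx=0$, which changes $p$ only by $dt$; moreover $[p]$ lies in $SH^2(X;\Zee/2)$ because $dw+(1/2)p^2=0$ is precisely the statement $(1/2)[p]^2=0$ in $H^4(X;\Ar/\Zee)$. For $\pi_3$, two representatives of the form $(w,0,0)$ differ by a ${\bf D}'(t,x)$ with $dt=dx=0$, forcing ${\bf D}'(t,x)=(0,0,0)$; since the first coordinate already lives in $C^3(X;\Ar/\Zee)/dC^2(X;\Ar/\Zee)$ by the definition of ${\bf C}(X)$, the cocycle $w$ represents a well-defined class in $H^3(X;\Ar/\Zee)$.

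Surjectivity is witnessed by explicit representatives: $(0,0,a)$ for $\pi_1$; for $\pi_2$, given $[p]\in SH^2(X;\Zee/2)$ choose $w\in C^3(X;\Ar/\Zee)$ with $dw=-(1/2)p^2$ and take $(w,p,0)$; for $\pi_3$, any cocycle $w\in Z^3(X;\Ar/\Zee)$. The kernel computations reduce to simple cancellations. If $\pi_1(w,p,a)=0$, write $a=dx$ and multiply $(w,p,a)$ by ${\bf D}'(0,x)=(0,0,dx)$ to reach a representative with third coordinate $0$, placing the class in $G^1$. If $\pi_2(w,p,0)=0$, write $p=dq$ and multiply by ${\bf D}'(q,0)=((1/2)qdq,dq,0)$ to reach a representative with both second and third coordinates zero, placing the class in $G^2$. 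Finally $\pi_3$ is injective since $[w]=0$ means $w\equiv 0$ in $C^3(X;\Ar/\Zee)/dC^2(X;\Ar/\Zee)$, so $(w,0,0)$ is the identity of ${\bf C}(X)$.

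The mildly delicate point throughout is that such one-sided ``subtractions'' in a non-abelian group $\bf C(X)$ genuinely compute the equivalence relation on the quotient; this is precisely what Theorem~\ref{1.1}(3) guarantees, since $\text{Im}({\bf D}')$ contains the commutator subgroup, so $G(X)$ is abelian and $\text{Im}({\bf D}')$-equivalence reduces to ordinary subtraction of representatives. Apart from that, the only real content is identifying when a ${\bf D}'(t,x)$ preserves the vanishing of the third (or second and third) coordinates, which tracks the filtration step exactly.
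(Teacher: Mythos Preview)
Your proof is correct and follows essentially the same approach as the paper's Propositions~\ref{4.1}--\ref{4.3}: define the three maps, check well-definedness by analyzing which ${\bf D}'(t,x)$ preserve the vanishing coordinates, and compute kernels by multiplying through by an explicit ${\bf D}'$-image to strip off the last nonzero entry. Your added remark that ${\rm Im}({\bf D}')$ contains commutators, so coset arithmetic is unambiguous, is a nice clarification the paper leaves implicit.
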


\subsection{Comparison of the Filtrations}\label{sect2.3}

Here is the result we state in \S\ref{sectstatement} and establish in \S\ref{sectproof}, which yields Theorem~\ref{1.2}

\begin{thm}\label{dualitythm}\label{2.2}
The adjoint of the bilinear pairing given in Theorem~\ref{1.2} is compatible with the filtrations 
of  $Hom(\Omega_3^{spin}(X),\Ar/\Zee)$ and of $G(X)$ given in this section. Furthermore, with the given identifications of the associated gradeds in cohomological terms, the
induced map of the associated gradeds is the identity. Consequently, the adjoint of the pairing identifies the compact groups $G(X)$ and $Hom(\Omega_3^{spin}(X),\Ar/\Zee)$.
\end{thm}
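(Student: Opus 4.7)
The plan is to view the duality theorem as a morphism of three-step filtered compact abelian groups and apply the five-lemma. Let $\Phi\colon G(X) \to \operatorname{Hom}(\Omega_3^{spin}(X),\R/\Z)$ be the adjoint of the pairing of Theorem~\ref{1.2}. One needs to establish three things: (i) $\Phi(G^i(X)) \subset F^i(X)$ for $i=1,2$; (ii) the induced maps on the associated gradeds agree, via the identifications of Proposition~\ref{2.1} and of \S\ref{sect2.1}, with the identity maps on $H^3(X;\R/\Z)$, $SH^2(X;\Z/2)$, and $H^1(X;\Z/2)$; and (iii) the five-lemma then delivers the global isomorphism, with continuity and compactness automatic from the discussion in \S\ref{sect1.2}.

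For filtration compatibility, the key observation is that $F^i(X) = \operatorname{Ann}(F_i(X))$, so I would show that triples of the form $(w,0,0)$ (respectively $(w,p,0)$) pair trivially with any $[M,f]$ that factors through a map to the $i$-skeleton of $X$. If $f$ factors through $X^{(1)}$, then $f^*w$ vanishes on any $3$-chain and the Poincar\'e dual $\Sigma_{f^*a}$ can be chosen to carry no cohomological information, giving vanishing of $\int_{[M]} f^*w$ and of the Arf term. If $f$ factors through $X^{(2)}$, then $f^*p$ has a representative of the form $\varphi^*u$ with $\varphi\colon M \to S^2$ null-homotopic after restriction to $M$, so $Z = \varphi^{-1}(\mathrm{pt})$ can be taken empty and the pairing collapses to $\int_{[M]} f^*w$, which vanishes since $w$ is a cocycle pulled back from $X^{(2)}$ evaluated on a $3$-chain in that skeleton.

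The heart of the proof is the computation of the induced maps on the three graded quotients. On the bottom piece $G^2(X) \to F^2(X)$, a triple $(w,0,0)$ with $dw=0$ pairs with $[M,f]$ as $\int_{[M]} f^*w = \langle [w], h(M,f)\rangle$ where $h\colon \Omega_3^{spin}(X) \to H_3(X;\Z)$ is the Hurewicz/edge map, whose adjoint gives exactly the identification $F^2(X) \cong H^3(X;\R/\Z)$. On the middle piece, I would take $(0,p,0)$ with $p$ in $SH^2(X;\Z/2)$ and show that the $(1/2)[\mathrm{Spin}(Z)]$ term computes the edge-homomorphism pairing between $SH^2(X;\Z/2)$ and $F_2/F_1 \subset H_2(X;\Z/2)$ given by $\langle [p], \varphi_*[M]\rangle \cdot (\text{class in } \Omega_1^{spin}=\Z/2)$; this is the content of the geometric interpretation of the $d_2$ differential and its $\operatorname{Hom}$-dual. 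On the top piece, the analogous analysis identifies the $(1/8)\operatorname{Arf}(\Sigma_{f^*a})$ term with the pairing between $H^1(X;\Z/2)$ and $F_1/0 \subset H_1(X;\Z/2)$ realized through the $\operatorname{Pin}^-$-bordism isomorphism $\Omega_2^{\operatorname{Pin}^-} \cong \Z/8$.

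The main obstacle will be the two geometric identifications on the middle and top gradeds: one must verify that the Spin-bordism class of $Z$ and the $\operatorname{Pin}^-$-bordism Arf invariant of $\Sigma_{f^*a}$ precisely compute the differentials (or rather, their $\operatorname{Hom}$-duals) in the Atiyah-Hirzebruch spectral sequence for $\Omega_*^{spin}(X)$. This demands a careful unwinding of the filtration on the bordism side: representing a class in $F_2(X)$ by a Spin $3$-manifold mapping into the $2$-skeleton, pulling back $p$ to extract a framed circle $Z$, and matching $[\operatorname{Spin}(Z)] \in \Omega_1^{spin} = \Z/2$ with the edge map; and the corresponding argument at the $1$-skeleton level for the Arf term. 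Once these three identifications are in hand, steps (i) and (ii) combine through the five-lemma (applied to the short exact sequences $0 \to G^i/G^{i+1} \to G/G^{i+1} \to G/G^i \to 0$ and their $F$-counterparts) to yield that $\Phi$ is a continuous isomorphism, hence a homeomorphism by compactness.
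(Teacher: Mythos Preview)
Your strategy---check that $\Phi$ respects the filtrations, identify the three associated-graded maps with the natural cohomology--homology pairings, and conclude by the five-lemma---is exactly the paper's approach (Proposition~\ref{filterprop} for the bottom two gradeds, Claim~\ref{claim2.1} for the top, and \S8.3 for the assembly). Two small remarks: in your filtration-compatibility paragraph you invoke the Arf surface $\Sigma_{f^*a}$ and the framed circle $Z$ in the cases $a=0$ and $p=0$ respectively, where those terms are vacuously absent---the paper's cleaner argument simply observes that reduced cochains of degree exceeding $k$ pull back to zero under an ordered simplicial map into $X^{(k)}$; and be aware that the bilinearity you take as given by the hypothesis is, in the paper's actual logical order, the deepest step, requiring the Arf non-additivity identity $\varphi(a,b)=((1/2)a(a\cup_1 b)b+(1/4)AB^2,ab,0)$ and the explicit $T_SS^2$ computation of Proposition~\ref{3.6}.
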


\section{Construction of the  Group G(X)}\label{sect3}

Throughout this section the space $X$ is fixed, and we work with cochains which can be either singular cochains or simplicial cochains  of an  ordered simplicial complex. In both cases, cup products and higher order $\text{cup}_i$ products are then defined by specific cochain formulas coming from the Alexander-Whitney diagonal approximation and related higher homotopies. 
In this section, we will often suppress $X$ from our notation. Thus we will write $G$ instead of $G(X)$ and for an abelian group $M$ we will write $C^j(M), Z^j(M), B^j(M), H^j(M)$ for the cochain, cocycle, coboundary, and cohomology groups of $X$ with $M$-coefficients.

\subsection{The Definition of the Product on Triples of Cochains}\label{sect3.1}

Recall from the statement of Theorem~\ref{1.1} in the introduction that we work with triples $$(w, p, a) \in  \C = \bar{C}^3(\R/\Z) \times Z^2(\Z/2) \times Z^1(\Z/2),$$ where $$ \oo{C^3}(\R/\Z) = C^3(\R/\Z) / dC^2(\R/\Z) = C^3(\R/\Z) / B^3(\R/\Z).$$
We defined a function $\D: \C  \to C^4(\R / \Z) = \C''$ by 
$$\mathbf{D}(w, p, a) = dw + \frac{1}{2}p^2  \in C^4(\R/\Z).$$
(Obviously $\D(w+df, p,a) = \D(w, p, a)$.) \\ 

 In the statement of Theorem~\ref{1.1} we also defined a product on the set of triples $(w,p,a) \in \C$, which we repeat in Definition~\ref{3.1} below.
 We will prove that  $\D$ is `additive'. 
An important part of that product formula involves certain integral cochains, which we mentioned in the introduction.  We will discuss these integral cochains in somewhat more detail here. We use the notation $A, B\in C^1(\Z)$ to denote the unique  $\Z$ cochain lifts of $a, b \in Z^1(\Z/2)$ taking only values 0 and 1 on the basis of  $C_1(\Z)$ consisting of $1$-simplices.  We call these the $\it special$ lifts.\\

By evaluating on a $2$-simplex one sees that $dA=2A^2$ in $C^2(\Zee)$.
Thus $A^2$ is an integral cocycle, lifting the $\Z/2$ cocycle $a^2$,  and representing a torsion integral cohomology class of order 2.
The class $A^4$  then gives a canonical integral torsion lifting of the Pontrjagin square $\cP(a^2)$, which we alluded to in the Introduction. With $\R / \Z$ coefficients, $d(A^3/8) =  A^4/4 = (1/4)\cP(a^2)$.\\

The special lift of $c = a+b$ is $C = A + B + 2(A \cup_1 B)$, which follows from the fact that $(A \cup_1 B)(0,1) = -A(0,1)B(0,1)$.  One sees that $C^2 = A^2 + B^2 + d(A \cup_1B)$, since in the torsion free abelian group $C^2(\ZZ)$ we have $$2C^2 = dC = dA + dB + 2d(A \cup_1B) = 2A^2 + 2B^2 + 2d(A \cup_1B).$$
More details about special lifts are given in Section~\ref{sectcochain}.\\

\begin{defn}\label{3.1} The product of elements $(w,p,a), (v,q,b) \in \C$ is 
$$(w, p, a) \centerdot (v, q, b) = (u, p+q+ab, a+b)\ \ \     where$$  $$u =  w+v+\frac{1}{2}\Big[ p\cup_1q+(p+q)\cup_1(ab)+a(a\cup_1b)b\Big] + \frac{1}{4}AB^2$$
Here, $(1/4)$ means the coefficient map $C^*(\ZZ) \to C^*(\RR / \ZZ)$ defined by\\ $1 \mapsto 1/4 \in \RR / \ZZ$.\\
\end{defn}

\begin{prop}\label{3.2} $\bold{D} [(w,p,a)\centerdot(v,q,b)] =  \bold{D} (w,p,a) + \bold{D} (v,q, b)$.  In particular, the set of triples $\{(w,p,a) \mid \D(w,p,a)  = 0\}$ is closed under the product operation.\end{prop}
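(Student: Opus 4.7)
The plan is to verify the identity $\mathbf{D}(u, p+q+ab, a+b) = \mathbf{D}(w,p,a) + \mathbf{D}(v,q,b)$ by an explicit cochain-level calculation in $C^4(X;\RR/\ZZ)$. After cancelling the common $dw + dv$ from both sides and substituting the definition of $u$, the required identity becomes
$$\tfrac{1}{2}d\bigl[p\cup_1 q + (p+q)\cup_1(ab) + a(a\cup_1 b)b\bigr] + \tfrac{1}{4}d(AB^2) + \tfrac{1}{2}(p+q+ab)^2 \;=\; \tfrac{1}{2}p^2 + \tfrac{1}{2}q^2.$$
My first step would be to expand the square on the left in the $\ZZ/2$-cochain algebra (where all doubles vanish), obtaining $p^2 + q^2 + (pq + qp) + ((p+q)(ab) + (ab)(p+q)) + (ab)^2$. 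This isolates the $\ZZ/2$-valued summands that must be produced with coefficient $1/2$ by the $d$-expressions on the left.

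Next I would compute each differential, using that $p, q, a, b$ and hence $ab$ are $\ZZ/2$-cocycles together with the mod-$2$ Leibniz identity $d(x \cup_1 y) = xy + yx$ for cocycles $x, y$. This yields $d(p\cup_1 q) = pq + qp$, $d((p+q)\cup_1(ab)) = (p+q)(ab) + (ab)(p+q)$, and $d(a(a\cup_1 b)b) = a(ab + ba)b = a^2b^2 + (ab)^2$. For the integral summand I would invoke the special-lift identities $dA = 2A^2$ and $dB = 2B^2$; a short Leibniz calculation gives $d(B^2) = 2B^3 - 2B^3 = 0$ and hence $d(AB^2) = 2A^2B^2$, so that $\tfrac{1}{4}d(AB^2) = \tfrac{1}{2}A^2B^2$, which as an $\RR/\ZZ$-cochain agrees with $\tfrac{1}{2}a^2b^2$.

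The verification then reduces to bookkeeping: every $\ZZ/2$-valued cochain that appears, namely $pq+qp$, $(p+q)(ab)+(ab)(p+q)$, $(ab)^2$, and $a^2b^2$, shows up on the left with combined coefficient $\tfrac{1}{2} + \tfrac{1}{2} \equiv 0$ in $\RR/\ZZ$, leaving exactly $\tfrac{1}{2}p^2 + \tfrac{1}{2}q^2$. The main subtle point, and the reason the product formula for $u$ is as baroque as it is, is that the unwanted term $a^2b^2$ produced by $d(a(a\cup_1 b)b)$ cannot be killed by any $\cup_1$-expression in $\ZZ/2$ built from $a$ and $b$ alone: the only way to annihilate it in $\RR/\ZZ$ is to find an integral cochain whose differential is $2A^2B^2$, and the special-lift property $dA = 2A^2$ (together with $d(B^2) = 0$) is exactly what makes $\tfrac{1}{4}AB^2$ play that role. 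Once this pairing between the $\ZZ/2$-level and integer-level computations is recognized, additivity of $\mathbf{D}$ falls out, and the ``in particular'' statement that $\{(w,p,a) : \mathbf{D}(w,p,a)=0\}$ is closed under the product is then immediate.
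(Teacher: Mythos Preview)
Your proof is correct and follows essentially the same route as the paper's: both expand $(p+q+ab)^2$ noncommutatively and compute the same four differentials $d(p\cup_1 q)$, $d((p+q)\cup_1(ab))$, $d(a(a\cup_1 b)b)$, and $d((1/4)AB^2)$, then match terms. Your write-up simply makes the final bookkeeping (and the reason the $\tfrac14 AB^2$ term is needed) a bit more explicit than the paper does.
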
 

\begin{proof} In the notation of Definition~\ref{3.1}, we need to prove $$(dw + (1/2)p^2) + (dv + (1/2)q^2) = du + (1/2)(p + q + ab)^2.$$ The key facts for computing $du$ are that $$d(p \cup_1 q) = pq + qp$$  $$d((p+q)\cup_1(ab)) = (p+q)ab + ab(p+q)$$  $$d(a(a \cup_1b) b) = a(ab+ba)b$$  $$d((1/4)AB^2) = (2/4)A^2B^2 = (1/2)a^2b^2,$$ and the (non-commutative!) expansion of $(1/2)(p+q+ab)^2$. \end{proof}

\begin{rem}\label{3.3}
A second product on $\C$ can be defined by replacing the term $+ AB^2/4$ in Definition 3.1 by $- AB^2/4$.  The map $(w,p,a) \mapsto (-w,p,a)$ defines an isomorphism between these product structures.  Thus if one product leads to a group isomorphic to $\DX$ then so will the other.  It is just a matter of changing the definition of the evaluation of a triple $(w,p,a)$ on a Spin bordism element.\\

But just changing the sign of $w$ is not very interesting.  What is far more interesting is the relation of the product formula with the evaluation of  triples $(0,0,a)$ on $f \colon M \to X$.  Associated to the cohomology class  $[f^*(a)] = \alpha \in H^1(M; Z/2)$ there is a Poincar\'e dual surface $\Sigma_{\alpha} \subset M$, and a Spin structure on $M$ gives rise to a quadratic function $q\colon H_1(\Sigma_{\alpha}; \Z/2) \to \Z/4$ refining the intersection pairing.  Such a quadratic function has a Brown-Arf invariant in $\Z/8$, and we will define our evaluation of $(0,0,a)$ on $f \colon M \to X$ to be this Arf invariant. (See \S\ref{7.1}).\\

The definition of the quadratic function $q$ is geometric and requires making a choice of whether one counts right-hand twists or left-hand twists in $M$ of certain bands around curves in $\Sigma_a$.  
Reversing this choice changes the sign of $q$ and hence of its
Brown-Arf invariant in $\Z/8$.  We follow the standard choice of many authors and count right-hand twists, see for example \cite{Brown, Guillou, KirbyTaylor, Matsumoto}.

  One of the most difficult parts of our work was showing that these two choices coincide with the two choices of products, using $+ AB^2/4$ or $- AB^2/4$ in Definition 3.1.  Still that leaves the question of which twist choice goes with which sign choice. It was only after a  calculation involving a complicated triangulation of the tangent circle bundle of $S^2$, denoted $T_SS^2$, that we were able to determine that the Brown-Arf invariant associated to counting right-hand twists, with a specific Spin structure on $T_SS^2$, is consistent with our choice of sign $+ AB^2/4$ in the product formula of Definition 3.1. \end{rem}

\begin{rem}\label{3.4}
As another comment on the product formula, we have discussed in Section~\ref{sect1.1} replacing our assumed relation $dw +(1/2)p^2 = 0$ by  Kapustin's original suggestion $dw + (1/2)p^2 + (1/4)A^4 = 0$.  This can be accomplished by replacing $(w,p,a)$ by $(w - (1/8)A^3, p, a) = 0$.  The product formula in Definition~\ref{3.1} can then be rewritten in terms of triples $(w,p,a)$ satisfying Kapustin's relation. In this form, the product becomes more complicated.  The term $(1/4)AB^2$ that occurs in the formula for $u$ in Definition~\ref{3.1} gets replaced by the more complicated term $$\frac{1}{4}[(A^2 \cup_1B^2) - (A^2+B^2)(A \cup_1 B) - (A \cup_1 B)(A^2 + B^2) - (A \cup_1 B)\ d(A\cup_1 B) )].$$
This indicates one practical advantage of our relation.
\end{rem}

\subsection{Natural Operations From Cochains to Cocycles}\label{sectkey}

In several proofs that follow we will need to show that certain cocycles constructed naturally from cochains are coboundaries. The following lemma provides a very general method for dealing with these situations.
\begin{lem}\label{key}
Fix integers $k\ge 0$ and $\ell\ge 0$ and abelian groups $M_1,\ldots, M_k$ and $N_1,\ldots, N_\ell$ as well as a group $M$.
Suppose that for each space $X$ we have a function
$$\psi_X(z_1,\ldots,z_k,x_1,\ldots,x_\ell),$$
taking values in $M$-cocycles of $X$ and 
defined for all ordered sets
$$\{z_1,\ldots, z_k,x_1,\ldots, x_\ell\}$$
consisting of singular cocycles of $z_i$ with coefficients $M_i$ and singular cochains $x_j$ of $X$  with coefficients $N_j$.
We suppose that for any continuous map $f\colon X\to Y$  we have
$$f^*\psi_Y(z_1,\ldots,z_k,x_1,\ldots,x_\ell)=\psi_X(f^*z_1,\ldots,f^*z_k,f^*x_1,\ldots,f^*x_\ell)$$ 
(that is to say $\psi$ is natural for continuous maps).
If, for each $1\le i\le k$, the cohomology class of $z'_i$  equals that of $z_i$ in $H^*(X; M_i)$  then the cohomology class of $\psi_X(z_1,\ldots,z_k,x_1,\ldots,x_\ell)$ equals that of $\psi_X(z'_1,\ldots,z'_k, 0,\ldots , 0)$.  In particular, the cocycle
$\psi_X(z_1,\ldots,z_k,x_1,\ldots,x_\ell)$ is cohomologous to both $$\psi_X(z_1,\ldots,z_k,0,\ldots,0)\ \text{and}\  \psi_X(z'_1,\ldots,z'_k,0,\ldots,0).$$
\end{lem}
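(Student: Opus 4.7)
The plan is to prove both assertions via a single cylinder argument. Let $I = [0,1]$, let $\pi \colon X \times I \to X$ be the projection, and let $i_0, i_1 \colon X \to X \times I$ be the two end inclusions. I will lift each cochain and cocycle in the statement to $X \times I$ in such a way that pulling back via $i_1$ recovers the original data $(z_i, x_j)$ while pulling back via $i_0$ recovers $(z'_i, 0)$. Evaluating $\psi_{X \times I}$ on the lifts produces a single cocycle on $X \times I$; its pullbacks along $i_0$ and $i_1$ are the two cocycles whose cohomology classes we wish to compare. Because $i_0$ and $i_1$ are homotopic, these pullbacks are automatically cohomologous in $H^*(X; M)$, which is the claim.

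To build the lifts, let $u \in C^0(I; \Zee)$ be the $0$-cochain dual to the vertex $1$ (so $u(0) = 0$ and $u(1) = 1$). Using the cochain cross product, set
$$\tilde{x}_j = x_j \times u \in C^{n_j}(X \times I; N_j),$$
which evidently satisfies $i_0^* \tilde{x}_j = 0$ and $i_1^* \tilde{x}_j = x_j$. For each pair $z_i, z'_i$, choose $y_i \in C^{m_i-1}(X; M_i)$ with $dy_i = z_i - z'_i$ and define
$$\tilde{z}_i = z'_i \times (1-u) + z_i \times u + (-1)^{m_i+1}\, y_i \times du \in C^{m_i}(X \times I; M_i).$$
A direct computation with the Leibniz rule $d(\alpha \times \beta) = d\alpha \times \beta + (-1)^{|\alpha|} \alpha \times d\beta$ shows that $d\tilde{z}_i = 0$; the three $du$-terms cancel thanks to the chosen sign. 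Since $i_\epsilon^* u$ is a constant $0$-cochain on $X$, we have $i_\epsilon^*(du) = d(i_\epsilon^* u) = 0$, so $i_0^*\tilde{z}_i = z'_i$ and $i_1^* \tilde{z}_i = z_i$.

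With the lifts in hand, consider
$$\Psi := \psi_{X \times I}(\tilde{z}_1, \ldots, \tilde{z}_k, \tilde{x}_1, \ldots, \tilde{x}_\ell) \in Z^*(X \times I; M).$$
By the naturality hypothesis applied to $i_0$ and $i_1$,
$$i_0^* \Psi = \psi_X(z'_1, \ldots, z'_k, 0, \ldots, 0), \qquad i_1^* \Psi = \psi_X(z_1, \ldots, z_k, x_1, \ldots, x_\ell),$$
and the homotopy $i_0 \simeq i_1$ forces these two cocycles on $X$ to represent the same class in $H^*(X; M)$. The case $z'_i = z_i$ gives $\psi_X(z, x) \sim \psi_X(z, 0)$, and the general case is the stated cohomology relation; the ``in particular'' conclusion follows by combining the two.

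The main obstacle is the sign bookkeeping in the definition of $\tilde{z}_i$: one must verify that the cross terms in $du$ contributed by $z'_i \times (1-u)$ and $z_i \times u$ cancel against $dy_i \times du = (z_i - z'_i) \times du$, with the correct Koszul sign depending on the parity of $m_i$. In the singular cochain setting this bookkeeping is immediate. For ordered simplicial complexes the cross product first requires fixing a compatibly ordered prism triangulation of $X \times I$, which is standard and compatible with the naturality hypothesis by functoriality in ordered simplicial maps.
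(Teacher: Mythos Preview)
Your proof is correct and follows essentially the same cylinder argument as the paper: lift the data to $X\times I$ so that the two end inclusions recover $(z_i,x_j)$ and $(z'_i,0)$, apply naturality of $\psi$, and use that the end inclusions induce the same map on cohomology. The only difference is that you supply explicit cross-product formulas for the lifts (and swap the roles of the two ends), whereas the paper simply asserts the existence of such lifts; your added detail is harmless and the argument goes through.
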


\begin{proof}
Let $j_0$ and $j_1$ be the inclusions of $X$ into the $0$- and $1$-end of $X\times I$.
 Suppose given $(z_1,\ldots,z_k)$ and $(z'_1,\ldots,z'_k)$  cocycles  in $C^*(X)$
with $z_i$ cohomologous to $z'_i$ for all $1\le i\le k$ and suppose given cochains $(x_1,\ldots, x_\ell)$. Then, for $1\le i\le k$, there are singular cocycles $\hat z_i$ of $X\times I$  with
$j_0^*\hat z_i=z_i$ and $j_1^*\hat z_i=z'_i$ and for $1\le i\le \ell$, there are singular cochains $\hat x_i$ of $X\times I$ and with 
$j_0^*\hat x_i=x_i$ and $j_1^*\hat x_i = 0$ for $1\le i\le \ell$.
Then by naturality we have the following equations of cocycles
$$j_0^*\psi_{X\times I}(\hat z_1,\ldots,\hat z_k,\hat x_1,\ldots,\hat x_\ell)= \psi_X(z_1,\ldots,z_k,x_1,\ldots,x_\ell),$$
and
$$j_1^*\psi_{X\times I}(\hat z_1,\ldots,\hat z_k,\hat x_1,\ldots,\hat x_\ell)= \psi_X(z'_1,\ldots,z'_k, 0,\ldots, 0).$$
 It follows that 
$\psi_X(z_1,\ldots,z_k,x_1,\ldots,x_\ell)$ and  $\psi_X(z'_1,\ldots,z'_k, 0\ldots, 0)$ represent the same cohomology class in $H^*(X)$.
\end{proof}

\begin{rem}\label{Coops}
Although the lemma is stated very generally, it is equivalent to the same statement with specific cohomological dimensions $m_i$, $n_j$, and $m$ assigned to the cochains $z_i$, $x_j$, and $\psi_X(z_i, x_j)$.   Let $[z_i] \in H^{m_i}(X; M_i)$ denote the cohomology class of $z_i$.  Suppose $[z_i] = [z'_i]$. Then in $H^m(X; M)$  $$[\psi_X(z_1, \ldots, z_k, 0, \ldots, 0)] = [\psi_X(z'_1, \ldots, z'_k, 0, \ldots, 0)] \in H^m(X; M).$$
 Since $\psi$ is natural in $X$, it follows that $\psi$ determines a natural cohomology operation $\psi\colon \prod H^{m_i}(X; M_i) \to H^m(X; M)$.  Of course, such operations correspond to elements of $H^m( \prod K(M_i, m_i);\ M)$.\\
 
 As another application of the lemma, suppose there are no $z_i$.  Then it follows that $[\psi(x_1, \ldots, x_\ell)] = [\psi(0, \ldots, 0)] \in H^m(X; M)$ does not depend on the $x_j$.  Applying naturality to the map from $X$ to a point, one sees that $[\psi] \equiv 0$ if $m > 0$.  If $m = 0$, then $[\psi] \equiv \mu$, the constant degree zero cocycle,  for some element $\mu \in M$. (Note $H^0(X; M)$ is the set of functions from path components of $X$ to $M$.) In other words, there are no non-trivial natural operations from cochains to cohomology.
 \end{rem}

\begin{rem}
The analogous statement and proof of the lemma hold in the ordered simplicial case. 
\end{rem}

\subsection{The Definition of the Relations on Triples of Cochains}\label{sect3.2}

Here is a preview of things to come.  The product defined in \S\ref{sect3.1} on $\C$ actually induces a group structure on  $\C$. That is, the  product  is associative and all elements have inverses.  This is difficult to prove directly, but fairly easy using Lemma~\ref{key} of the previous section.  We will  form the quotient of  $\C$ by a normal subgroup.  The subgroup  is the image of another differential $\bold{D'}$ with target $\C$, a differential also defined in the statement of Theorem~\ref{1.1},   $$\C' = C^1(\Z/2) \times C^0(\Z/2) \xrightarrow{\D'} \oo{C^3}(\R/\Z) \times Z^2(\Z/2) \times Z^1(\Z/2) = \C.$$ We define a group structure on $\C'$ and prove  $\D'$ is a group homomorphism whose image contains the commutator subgroup of $\C$. This is another  result in which Lemma~\ref{key} finesses difficult direct computations.
Then $\D$ and $\D'$ will become differentials in a little  non-abelian cochain complex, $$ \C' \xrightarrow{\D'} \C \xrightarrow{\D} \C'' ,$$ whose cohomology is  an abelian group. The group $G$
is defined to be the cohomology of this little cochain complex.
\\

Here is the differential formula for $(t,x) \in \C' = C^1(\Z/2) \times C^0(\Z/2).$ $$ \bold{D'} ( t, x) = ( \frac{1}{2}t dt , \ dt,\  dx) \in \C.$$

\begin{claim}\label{DD'claim}\label{3.5}
 $\bold{D} \circ \bold{D'} = 0$.
 \end{claim}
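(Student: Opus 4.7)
The plan is a direct cochain-level computation; the statement is a compatibility check between the two differentials and no serious obstacle is expected. The only points of care are (i) that the $a$-slot of $\mathbf{D}'$ never enters the formula for $\mathbf{D}$, and (ii) keeping track of the coefficient map $\tfrac{1}{2}\colon \ZZ/2\to \RR/\ZZ$.

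Unpacking the definition, $\mathbf{D}'(t,x) = (w,p,a)$ with $w = \tfrac{1}{2}(t\cup dt)$, $p = dt$, and $a = dx$. Since $\mathbf{D}(w,p,a) = dw + \tfrac{1}{2}p^{2}$ does not involve $a$, the claim reduces to
\[
d\bigl(\tfrac{1}{2}(t\cup dt)\bigr) + \tfrac{1}{2}(dt)^{2} = 0 \in C^{4}(X;\RR/\ZZ).
\]
The coefficient map $\tfrac{1}{2}\colon C^{*}(X;\ZZ/2)\to C^{*}(X;\RR/\ZZ)$ is a chain map, hence commutes with $d$, and at the $\ZZ/2$-level the Leibniz rule for the cup product together with $d^{2}=0$ yields $d(t\cup dt) = (dt)\cup (dt)\pm t\cup d(dt) = (dt)^{2}$. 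Therefore $dw = \tfrac{1}{2}(dt)^{2}$.

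The conclusion is then immediate: additivity of $\tfrac{1}{2}$ gives
\[
\tfrac{1}{2}(dt)^{2} + \tfrac{1}{2}(dt)^{2} = \tfrac{1}{2}\bigl(2(dt)^{2}\bigr) = 0,
\]
since $2(dt)^{2} = 0$ in $C^{4}(X;\ZZ/2)$. Equivalently, the cochain $\tfrac{1}{2}(dt)^{2}$ takes values in the $2$-torsion subgroup $\{0,\,1/2\}\subset \RR/\ZZ$, which is killed by doubling. This gives $\mathbf{D}\circ\mathbf{D}'(t,x) = 0$, as required.
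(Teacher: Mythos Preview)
Your proof is correct and follows the same approach as the paper: both reduce to the cochain identity $d\bigl(\tfrac{1}{2}t\,dt\bigr)=\tfrac{1}{2}(dt)^{2}$ and then observe that twice a $2$-torsion element vanishes in $\RR/\ZZ$. The paper states this in a single line, while you spell out the role of the coefficient map and the Leibniz rule, but the argument is identical.
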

 
 \begin{proof}
  This follows because $(1/2)d(t dt) = (1/2)(dt)^2$.
  \end{proof}
  
    The coboundaries $\D'(\C')$  will evaluate 0 on Spin bordism elements, once we explain how to evaluate $\bold{D}$ cocycles on Spin bordism elements.  This explains how we will arrive at a function $$G(X) = {\rm Ker}(\D) / {\rm Im}(\D')\to \DX.$$
    
To understand the substructure of the $\D$ cocycles generated by the image of $\bold {D'}$, it is important to define a product on the pairs $(t, x)$ so that $\bold{D'}$ preserves products.  
Here is our  product formula on $\C'$. \\

\begin{defn}\label{defn3.6} The product of elements $(t,x), (s,y) \in \C'$ is defined by
$$ ( t, x) \centerdot (s,y) = (  t + s + xdy,\ x + y)$$  \end{defn}

\begin{prop}\label{3.7} $\C'$ is a group and $\D'$ preserves products. That is,
$$\bold{D'}[ ( t, x) \centerdot ( s, y) ] = \bold{D'}( t, x) \centerdot \bold{D'} ( s, y).$$\end{prop}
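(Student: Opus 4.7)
My plan splits into two parts. For the group structure on $\mathbf{C}'$, I would verify the axioms by direct calculation. The element $(0,0)$ is plainly a two-sided identity. Associativity reduces to checking that both $((t,x)\centerdot(s,y))\centerdot(r,z)$ and $(t,x)\centerdot((s,y)\centerdot(r,z))$ equal $(t+s+r+x\,dy+x\,dz+y\,dz,\, x+y+z)$; this uses only distributivity of the cup product and $d(y+z)=dy+dz$. A two-sided inverse of $(t,x)$ is $(t+x\,dx,\,x)$, since $(t,x)\centerdot(t+x\,dx,\,x) = (t+t+x\,dx+x\,dx,\,x+x) = (0,0)$ in mod-$2$ coefficients.

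For the claim that $\mathbf{D}'$ preserves products, I would first compute both sides. By the Leibniz rule together with $d^{2}y=0$ we have $d(x\,dy)=dx\,dy$, so $d(t+s+x\,dy)=dt+ds+dx\,dy$. Consequently the second components of $\mathbf{D}'((t,x)\centerdot(s,y))$ and $\mathbf{D}'(t,x)\centerdot\mathbf{D}'(s,y)$ both equal $dt+ds+dx\,dy$, and the third components both equal $dx+dy$, so these components match on the nose. Letting $u_L$ and $u_R$ denote the first components of the two sides (viewed in $\overline{C^{3}}(X;\R/\Z)$), everything reduces to showing $u_L=u_R$.

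The hard part is precisely this comparison: a monomial-by-monomial match is awkward because $u_R$ mixes $(1/2)$-scaled $\Z/2$-valued cup and $\cup_{1}$ terms with the $(1/4)$-scaled integral-lift term $(1/4)AB^{2}$, whose values live in $\{0,1/4\}$ rather than $\{0,1/2\}$. My strategy is instead to invoke Lemma~\ref{key}. First, $u_L - u_R$ is a genuine $\R/\Z$-cocycle: by Claim~\ref{DD'claim}, each of $\mathbf{D}'(t,x)$, $\mathbf{D}'(s,y)$, and $\mathbf{D}'(t+s+x\,dy,\,x+y)$ lies in $\ker\mathbf{D}$; by Proposition~\ref{3.2}, $\ker\mathbf{D}\subset\mathbf{C}$ is closed under the product of Definition~\ref{3.1}; hence both $(u_L,\cdot,\cdot)$ and $(u_R,\cdot,\cdot)$ satisfy $du+(1/2)p^{2}=0$ with the common second component $p=dt+ds+dx\,dy$, which forces $d(u_L-u_R)=0$.

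Second, $u_L-u_R$ is constructed functorially from the cochains $t,s,x,y$ with no cocycle inputs, so by the $k=0$ form of Lemma~\ref{key} highlighted in Remark~\ref{Coops}, the cohomology class of such a natural $\R/\Z$-cocycle in the positive degree $3$ is automatically trivial (apply naturality to the collapse map $X\to\{\mathrm{pt}\}$, under which $t,s,x,y$ all pull back from the $0$ cochain). Therefore $u_L-u_R\in dC^{2}(X;\R/\Z)$, i.e., $u_L=u_R$ in $\overline{C^{3}}(X;\R/\Z)$, completing the proof. The anticipated obstacle is the preliminary cocycle verification: one must arrange the matching of second components before the $\ker\mathbf{D}$ argument can force $d(u_L-u_R)=0$; once that is in hand, Lemma~\ref{key} does all the remaining work.
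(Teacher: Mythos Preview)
Your proposal is correct and follows essentially the same approach as the paper: direct verification of the group axioms on $\mathbf{C}'$, direct matching of the second and third components of $\mathbf{D}'$, and then invoking Lemma~\ref{key} (in the $k=0$ form of Remark~\ref{Coops}) to show the difference of first components is a coboundary, after using Claim~\ref{DD'claim} and Proposition~\ref{3.2} to see that difference is a cocycle. One small wording issue: in your parenthetical about the collapse map, the cochains $t,s,x,y$ on $X$ are not themselves pulled back from a point; the actual argument (as in Remark~\ref{Coops}) is that Lemma~\ref{key} first shows $[\psi(t,s,x,y)]=[\psi(0,0,0,0)]$, and \emph{then} naturality along $X\to\mathrm{pt}$ kills the latter in positive degree.
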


\begin{proof} Associativity of the product $(t,x)(s,y) = (t+s+xdy,\ x+y)$ on $\C'$ is straightforward.  We also have inverses in $\C'$, specifically one has $(t,x)(t + x dx, x) = (0,0)$.\\

We turn to the statement that $\D'$ preserves products.  Equality of the two sides in the third $Z^1(\Z/2)$ coordinate is trivial, both are $dx+dy$.  Almost as easy is equality in the second $Z^2(\Z/2)$ coordinate, since $d(t+s+xdy) = dt+ds+dxdy$. The proof we give that the first coordinates in $\oo{C^3}(\R/ \Z)$ of the two sides of the equation in Proposition~\ref{3.7} coincide relies on Lemma~\ref{key}. What we will prove  is that when we write out the difference of the  first coordinates in this equation, we find that it is a coboundary $dh \in C^3(\R/\Z)$.  But coboundaries  $ dh$ are trivial in $\oo{C^3}(\R/\Z).$\\

Write $$\bold{D'}[ ( t, x) \centerdot ( s, y) ] = (C_1(t,s, x,y), dt+ds+dxdy, dx+dy)$$ and $$\bold{D'}( t, x) \centerdot \bold{D'} ( s, y) = (C_2(t,s, x,y), dt+ds+dxdy, dx+dy).$$

Both $C_1$ and $C_2$ are natural operations defined on cochains $t,s,x,y$.  For $C_1$, from the definition of $\D'$  (above Claim~\ref{3.5}) and the definition of the product in $\C'$  (Definition~\ref{defn3.6}), one just sees sums of products of the cochains and their differentials, along with the coefficient morphism $(1/2)\colon H^*(\ZZ / 2) \to H^*(\RR / \ZZ)$. For $C_2$, from the product formula in $\C$  (Definition~\ref{3.1}) one also sees terms involving $\text{cup}_1$ products and the term $(1/4)(Dx)(Dy)^2$, where $Dx$ and $Dy$ are the special integral lifts of $dx$ and $dy$.  But the special integral lift is a natural cochain operation $C^*(\ZZ / 2) \to C^*(\ZZ)$.\\

Arbitrary elements in ${\rm Im}(\D')$ are $\D$-cocycles and arbitrary products of $\D$-cocycles are $\D$-cocycles, by Claim~\ref{3.5} and Proposition~\ref{3.2}. Thus $$dC_1 = (1/2)(dt + ds + dxdy)^2 = dC_2.$$  It follows that $$\psi(t,s,x,y) = C_1(t,s,x,y) - C_2(t,s,x,y)$$ is a natural operation from cochains (no cocycle variables) to 3-dimensional cocycles with $\RR /\ZZ$ coefficients.  By Remark~\ref{Coops} following Lemma~\ref{key}, the cohomology class $[\psi(t,s,x,y)] = 0 \in H^3(\RR / \ZZ)$ for all $t,s,x,y$.  But this means exactly that $C_1 - C_2$ is always a coboundary and hence is zero in $\overline{C}(\Ar/\Zee)$. 
\end{proof}

\subsection{Discussion of Associativity and Inverses}\label{sect3.3}

Now we turn to the proof that ${\bold C}$ is a group.

\begin{prop}\label{3.8}  The product on $\C $  is associative and defines a group structure on this set. The identity element is $(0,0,0)$ and the inverse of $(w,p,a)$ is $$(w,p,a)^{-1} = (-w + \frac{1}{2}p\cup_1a^2+ \frac{1}{4}A^3,\ p+a^2,\ a).$$  Also, $$(w,p,a)^{-1} = (w,p,a)^3(-4w+ \frac{1}{2}a^3,\ 0, \ 0).$$\end{prop}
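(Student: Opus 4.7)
My plan is to verify the group axioms in the order identity, inverse, associativity, and then deduce the alternative form of the inverse.

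\textbf{Identity and inverse.} Setting $(v,q,b)=(0,0,0)$ in Definition~\ref{3.1}, every $\cup_1$-term and the $\frac{1}{4}AB^2$ term vanishes, giving $(w,p,a)\cdot(0,0,0)=(w,p,a)$; the left identity is symmetric. For the proposed inverse, substituting $(v,q,b)=(-w+\frac{1}{2} p\cup_1 a^2+\frac{1}{4} A^3,\ p+a^2,\ a)$ into the product formula yields third coordinate $2a=0$ and second coordinate $p+(p+a^2)+a\cdot a=2(p+a^2)=0$ in $\Z/2$. In the first coordinate the two $w$'s cancel, leaving a $3$-cochain $\psi(p,a)$ depending only on the cocycles $p,a$. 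Because both $\Z/2$-components of the product vanish, Proposition~\ref{3.2} forces $\psi(p,a)$ to be a $3$-cocycle. By Lemma~\ref{key} its class in $H^3(X;\R/\Z)$ is a natural cohomology operation in $[p]\in H^2(\Z/2)$ and $[a]\in H^1(\Z/2)$. I would expand $\psi$ explicitly and, using $dA=2A^2$, $d(A^3)=2A^4$, and the $\cup_1$ identity $d(x\cup_1 y)=dx\cup_1 y+(-1)^{|x|}x\cup_1 dy+xy+(-1)^{|x||y|+1}yx$, exhibit $\psi(p,a)$ as a coboundary. The left-inverse computation is symmetric.

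\textbf{Associativity.} The second coordinates of the two associations both expand to the symmetric expression $p+q+r+ab+ac+bc$, and the third coordinates both expand to $a+b+c$, by straightforward algebra. The first coordinates differ by a cochain $\Delta$ natural in all nine inputs; since both associations have the same image under $\D$ by Proposition~\ref{3.2} and the lower two coordinates already agree, $\Delta$ is a $3$-cocycle. Lemma~\ref{key} allows the free cochain inputs $w,v,u$ to be set to zero without changing the cohomology class of $\Delta$, reducing associativity to the statement that a certain natural cohomology operation $H^2(\Z/2)^3\times H^1(\Z/2)^3\to H^3(\R/\Z)$ vanishes. I would verify this by expanding both associations and simplifying with the $\cup_1$-identities used in the proof of Proposition~\ref{3.2}, together with the formula $C=A+B+2(A\cup_1 B)$ for the special integral lift of $a+b$. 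This is the main obstacle: although Lemma~\ref{key} reduces associativity to a purely cohomological statement, the verification still requires disciplined bookkeeping with $\cup_1$-identities, the special integral lifts, and the cross-terms $AB^2$ and $a(a\cup_1 b)b$ appearing in Definition~\ref{3.1}.

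\textbf{Alternative inverse formula.} With associativity in hand, I would compute $(w,p,a)^2$ and then $(w,p,a)^3$ by two applications of Definition~\ref{3.1}: the third coordinate of $(w,p,a)^3$ turns out to be $a$ and the second is $p+a^2$, already matching those of $(w,p,a)^{-1}$. On triples with vanishing $\Z/2$-components the product formula reduces to addition in the first slot, so right multiplication of $(w,p,a)^3$ by $(-4w+\frac{1}{2} a^3,0,0)$ simply adds this term in the first coordinate. The resulting first coordinate, simplified by the same cochain identities invoked above, matches that of the inverse formula modulo a coboundary, yielding the stated identity.
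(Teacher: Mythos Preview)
Your overall plan---reduce the first-coordinate discrepancies to natural $\R/\Z$-valued $3$-cocycles and then invoke Lemma~\ref{key}---matches the paper's strategy, but for associativity you miss the key idea that makes the argument work. After the reduction you propose to ``verify this by expanding both associations and simplifying with the $\cup_1$-identities,'' and you correctly flag this as the main obstacle. The paper does \emph{not} do this expansion, and explicitly says in \S\ref{sect3.2} that direct verification is difficult. Instead, once the difference $C_1-C_2=\psi(p,a,q,b,r,c)$ is identified as a universal cohomology operation (the $w,v,u$ already cancel, incidentally, since they appear only as $(w+v)+u$ versus $w+(v+u)$), the paper observes that $[\psi]$ must lie in $H^3\bigl((K(\Z/2,2)\times K(\Z/2,1))^3;\R/\Z\bigr)$, which is a $\Z/2$-vector space spanned by the degree-$3$ monomials in $p,a,q,b,r,c$. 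One then rules out every monomial by checking associativity in \emph{easy special cases}: if any one of the pairs $(p,a),(q,b),(r,c)$ is $(0,0)$ the triple product is trivially associative (eliminating all monomials except $abc$), and setting all three elements equal eliminates $abc$. No $\cup_1$-bookkeeping, no special-lift cross terms. This is the step you are missing, and without it your ``disciplined bookkeeping'' is likely to be intractable.

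A few smaller points. For the inverse, the paper does the computation directly rather than via Lemma~\ref{key}; the main ingredient you did not name is that $(1/2)\,p\cup_1 p$ is a coboundary in $\overline{C}^3(\R/\Z)$, by (SL4): $dP=2(P\cup_1 P)$ gives $(1/2)p\cup_1 p=d(P/4)$. Also, your claim that ``the left-inverse computation is symmetric'' is not obviously true, since the product formula is visibly asymmetric (it contains $AB^2$, not $BA^2$); once associativity is in hand this is moot, but your ordering (inverse before associativity) makes the claim premature. For the alternative inverse formula the paper computes $(w,p,a)^2=(2w-\tfrac14 A^3,\,a^2,\,0)$ and then $(w,p,a)^4=(4w+\tfrac12 a^3,\,0,\,0)$, from which the formula is immediate; this is slightly cleaner than computing $(w,p,a)^3$ and comparing.
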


\begin{proof} For the inverse statement, we write out the product and do a little canceling

$$(w,p,a)(-w + \frac{1}{2}p\cup_1a^2+ \frac{1}{4}A^3,\ p+a^2,\ a) = $$
$$(\frac{1}{4}A^3 + \frac{1}{2}[(p\cup_1p  + a(a\cup_1a)a]+ \frac{1}{4}A^3,\ 0,\ 0).$$
But by (SL4) from  Section~\ref{sectcochain}, we have $(1/2)p\cup_1p = d(P/4) $,  which is 0 in $\overline{C}^3(\R / \Z)$.  Here, $P$ is the special integral lift of the $\Z/2$ cocycle $p$. (Alternatively, $p \cup_1 p$ represents $Sq^1(p)$, which is the $\ZZ/ 2$ reduction of the integral Bockstein of $p$, which is a torsion integral class.)\\

Next, $(1/2)a(a\cup_1a)a = (1/2)a^3$, since $a\cup_1a = a$, and $(2/4)A^3 = (1/2)a^3$, so $$\frac{1}{4}A^3 + \frac{1}{2}a(a\cup_1a)a + \frac{1}{4}A^3 = 0 \in C^3(\R/\Z),$$ 
 proving the inverse statement.\\

Another way to find the inverse (after associativity is established!) is to note $$(w,p,a)(w,p,a) = (2w + \frac{1}{4}A^3 + \frac{1}{2}a^3, a^2,0)= (2w- \frac{1}{4}A^3, a^2, 0).$$ So then
$$(w,p,a)^4 = (2w- \frac{1}{4}A^3, a^2, 0)(2w- \frac{1}{4}A^3,\ a^2,\ 0) = (4w+ \frac{1}{2}a^3,\ 0,\ 0).$$  Therefore $(w,p,a)^{-1} = (w,p,a)^3(-4w +(1/2)a^3,\ 0,\ 0)$.\\

For associativity, we need to compare $$[(w,p,a)(v,q,b)]\ (u,r,c)\ and\ (w,p,a)\ [(v,q,b)(u,r,c)].$$  The third coordinates are $(a+b)+c$ and $a+(b+c)$ respectively, hence coincide.  The second coordinates are $p+q+ab + r + (a+b)c$ and $p+q+r+bc+a(b+c)$, respectively, and these also coincide.
 To complete the proof of associativity  in $\C$, we need to show the first coordinates in the two ways of associating a triple product differ by a coboundary $df \in B^3(\R/\Z)$.  We will do this using Lemma~\ref{key}, but we will need the more sophisticated part of Remark~\ref{Coops} concerning cohomology operations defined on cocycles.\\
 
 Let $C_1$ and $C_2$ denote the first coordinates of the two ways of associating the triple products. Since both triple products are $\D$-cocycles, with the same second coordinate, we have  $dC_1 = dC_2$. A cursory examination of the product in Definition~\ref{3.1} shows that the only occurrences of $w,v,u$ in the two triple products are $(w+v) +u$ on the left and $w + (u+v)$ on the right.  Therefore $C_1 - C_2 = \psi(p,a,q,b,r,c) \in Z^3(\RR / \ZZ)$  is a natural function of six cocycles $p,a, q, b, r, c$. By Remark~\ref{Coops}, $[\psi]$ must be given by a natural cohomology operation; i.e., by an element  in  $H^3((K(\ZZ/2, 2) \times K(\ZZ/2, 1))^3; \ \RR / \ZZ)$.\\
 
For convenience, we will just call the universal $\ZZ / 2$ classes in the Eilenberg-MacLane spaces by the same names $p,a,q,b,r,c$. The $\RR / \ZZ$ cohomology group of the indicated product is a $\ZZ / 2$ vector space spanned by the $(1/2)$ images of the classes of degree 3 in $\ZZ / 2[p,a,q,b,r,c]$ in the cohomology of the  Eilenberg-MacLane spaces.  (In $\RR / \ZZ$ cohomology, the image of $(1/2)Sq^1$ is $(0)$. Thus $Sq^1p, Sq^1q, Sq^1r$ map to 0.  In fact,  $Sq^1(ab), Sq^1(ac), Sq^1(bc)$ also map to 0.)  So $[\psi]$ can be expressed as a degree 3 sum of products of $p,a,q,b,r,c$, with some redundancy.\\

It is quite easy to directly use Definition~\ref{3.1} to compute the two triple products if one of the pairs $(p,a), (q,b), (r,c)$ is $(0,0)$ and see that associativity holds. Therefore in these cases $[\psi] = 0$, hence the terms $$pa, pb, \ldots, rb, rc, a^2b, a^2c, b^2c, a^3, b^3, c^3$$ cannot appear in the formula for $[\psi]$.  This leaves only the term $abc$ as a possibility.  But associativity holds, hence $[\psi] = 0$, when the three elements $(w,p,a), (v,q,b), (u,r,c)$ coincide.   So it cannot be that $[\psi] = abc$. The only remaining possibility is that $[\psi] = 0$ and hence $C_1 - C_2$ is always a coboundary, as desired.

\end{proof}

\subsection{Discussion of Commutators and Commutativity}\label{sect3.4}

Next we take up the issue of commutativity.  First, we observe $$(w,p,0)(v,q,b) = (w+v+ \frac{1}{2}p\cup_1q,\  p+q,\ b) = (v,q,b)(w,p,0) (df,0,0)\},$$ where $f = p\cup_2q$. Thus, the elements $(w,p,0)$ commute with everything in $$\C = \oo{C^3}(\R/\Z) \times Z^2(\Z/2) \times Z^1(\Z/2) .$$ We thus have the general product computation $$ (w,p,a)(v,q,b) = (w,p,0)(v,q,0)(0,0,a)(0,0,b).$$ Also $(w,p,0)^{-1} = (-w, p, 0)$.  The commutator of two arbitrary elements $(w,p,q) = (w,p,0)(0,0,a)$ and $(v,q,b) = (v,q,0)(0,0,b)$ is then seen to be 
$$(0,0,a)(0,0,b)(0,0,a)^{-1}(0,0,b)^{-1}.$$  We also know $(0,0,a)^{-1} = ((1/4)A^3, a^2, a)$.  We have proved the first result below. \\ 

\begin{prop}\label{3.9} The most general basic commutator in $\C$ is 
$$(w,p,a)(v,q,b)(w,p,a)^{-1}(v,q,b)^{-1} = (0,0,a)(0,0,b)(0,0,a)^{-1}(0,0,b)^{-1}$$
$$= (0,0,a)(0,0,b) (\frac{1}{4}A^3, a^2, a) (\frac{1}{4}B^3, b^2, b)$$\end{prop}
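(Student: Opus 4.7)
The proof is essentially a formalization of the observations already recorded in the paragraphs immediately preceding the proposition. The plan rests on two ingredients: (i) the factorization $(w,p,a) = (w,p,0)\centerdot(0,0,a)$, which one reads off directly from Definition~\ref{3.1} by noticing that the $\mathrm{cup}_1$, $(1/4)AB^2$, and $a(a\cup_1 b)b$ correction terms all vanish as soon as one of the third-coordinate factors is zero; and (ii) the centrality of $(w,p,0)$-type elements in $\C$, which was established above by showing that the commutation defect equals $(1/2)(p\cup_1 q + q\cup_1 p) = d((1/2)\,p\cup_2 q)$ in the first coordinate, hence is trivial in $\oo{C^3}(\R/\Z)$.

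Given these, I would expand
\[
(w,p,a)(v,q,b)(w,p,a)^{-1}(v,q,b)^{-1}
\]
by inserting the factorization above and its analogues, together with $(w,p,0)^{-1} = (-w,p,0)$ (immediate from Proposition~\ref{3.8}). Centrality of the four $(w,p,0)$-type factors then allows me to shuffle them past the $(0,0,\cdot)$-type factors and to rearrange them among themselves, yielding
\[
(w,p,0)(-w,p,0)(v,q,0)(-v,q,0)\centerdot(0,0,a)(0,0,b)(0,0,a)^{-1}(0,0,b)^{-1}.
\]
A one-line application of Definition~\ref{3.1} shows $(w,p,0)\centerdot(-w,p,0) = ((1/2)\,p\cup_1 p,\,0,\,0)$, and identity (SL4) of Section~\ref{sectcochain} rewrites $(1/2)\,p\cup_1 p$ as the coboundary $d(P/4)$, so this factor is trivial in $\oo{C^3}(\R/\Z)$; the same reasoning disposes of the $(v,q,0)$ pair. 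The left block collapses to $(0,0,0)$, producing precisely the right-hand side of the first equality of the proposition.

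The second equality is then pure substitution, using the inverse formula from Proposition~\ref{3.8}: $(0,0,a)^{-1} = ((1/4)A^3,\,a^2,\,a)$ and $(0,0,b)^{-1} = ((1/4)B^3,\,b^2,\,b)$. The principal obstacle is not mathematical depth but careful bookkeeping: each centrality move must be verified to introduce no stray cochains in the $\oo{C^3}(\R/\Z)$ coordinate. This is controlled by the vanishing of the $a,b$-dependent cross terms in Definition~\ref{3.1} whenever one third coordinate is zero, together with the (SL4) coboundary identity for $(1/2)\,p\cup_1 p$ that is needed to kill the surviving $w$- and $v$-diagonal contributions.
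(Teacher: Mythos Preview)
Your proof is correct and follows essentially the same approach as the paper: factor $(w,p,a)=(w,p,0)(0,0,a)$, use centrality of the $(w,p,0)$-type elements (with the $\cup_2$ coboundary identity) to reduce the commutator to that of $(0,0,a)$ and $(0,0,b)$, and then substitute the inverse formula from Proposition~\ref{3.8}. Your explicit verification that $(w,p,0)(-w,p,0)=((1/2)\,p\cup_1 p,0,0)$ vanishes via (SL4) is a slightly more hands-on version of the paper's bare assertion that $(w,p,0)^{-1}=(-w,p,0)$, but the argument is the same.
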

 
\begin{prop}\label{3.10} In $C^3(\R/\Z) \times Z^2(\Z/2) \times Z^1(\Z/2)$ one has 
$$(0,0,a)(0,0,b) =(dg+ \frac{1}{2}sds, ds, 0)(0,0,b)(0,0,a) $$ with $s = a\cup_1b \in C^1(\Z/2)$ and some $g \in C^2(\R/\Z)$. Thus, in the group $$\C = \overline{C^3}(\R/\Z) \times Z^2(\Z/2) \times Z^1(\Z/2)$$ the basic commutator of Proposition~\ref{3.9}  is $$((1/2)(a\cup_1b)d(a\cup_1b), d(a\cup_1b), 0) = \D'(a\cup_1b,0).$$
In particular, the image of ${\bold D'}$ contains the commutator subgroup of ${\bold C}$.
\end{prop}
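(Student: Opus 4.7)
The plan is to compute both sides of the displayed equation directly from Definition~\ref{3.1}, identify the unique pre-multiplier $Y\in\mathbf{C}$ satisfying $(0,0,a)\centerdot(0,0,b)=Y\centerdot(0,0,b)\centerdot(0,0,a)$, and then apply Lemma~\ref{key} to show that the first coordinate of $Y$ agrees with $\tfrac12 s\,ds$ up to a coboundary. Unwinding Definition~\ref{3.1} gives
$$(0,0,a)\centerdot(0,0,b)=\bigl(\tfrac{1}{2}a(a\cup_1 b)b+\tfrac{1}{4}AB^2,\ ab,\ a+b\bigr)$$
and symmetrically for the swap. Inverses exist in $\mathbf{C}$ by Proposition~\ref{3.8}, so $Y$ is unique. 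Matching third coordinates forces its third coordinate to be $0$; matching second coordinates and using $ab+ba=d(a\cup_1 b)=ds$ forces the second coordinate to be $ds$. Writing $Y=(c,ds,0)$ and substituting into Definition~\ref{3.1} (most terms vanish because the third coordinate of $Y$ is $0$) expresses $c\in C^3(X;\Ar/\Zee)$ explicitly as a natural $3$-cochain in $a,b$.

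Both products lie in $\operatorname{Ker}(\mathbf{D})$ by Proposition~\ref{3.2}, so $Y$ does, giving $dc=\tfrac12(ds)^2$; the Leibniz rule gives the same for $d(\tfrac12 s\,ds)$. Hence $\psi(a,b):=c-\tfrac12 s\,ds$ is a natural $3$-cocycle with $\Ar/\Zee$ coefficients, depending only on the two $\Z/2$ $1$-cocycles $a,b$. By Lemma~\ref{key} and Remark~\ref{Coops}, the class $[\psi]$ corresponds to a natural cohomology operation $H^1(-;\Z/2)^2\to H^3(-;\Ar/\Zee)$, classified by an element of $H^3(K(\Z/2,1)^2;\Ar/\Zee)$. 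This target group is spanned by $(1/2)[a]^3$, $(1/2)[b]^3$, and $(1/2)[a]^2[b]$, with the relation $(1/2)[a]^2[b]=(1/2)[a][b]^2$ coming from the fact that their sum equals $Sq^1([a][b])$ and so lies in the kernel of the coefficient map $(1/2):H^*(\Z/2)\to H^*(\Ar/\Zee)$.

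Next I evaluate $[\psi]$ in three special cases. Setting $a=0$ (or by symmetry $b=0$) makes one factor the identity of $\mathbf{C}$, so $Y$ is the identity and $s=0$; thus $[\psi(0,b)]=0=[\psi(a,0)]$, which kills the two pure-monomial generators. Setting $a=b$ at the cochain level turns the defining identity into $XX=Y\cdot XX$ with $X=(0,0,a)$, again forcing $Y=\text{id}$; meanwhile $ds=2a^2=0$ makes $\tfrac12 s\,ds=0$, so $[\psi(a,a)]=0$. Since the mixed generator $(1/2)[a]^2[b]$ pulls back along the diagonal to $(1/2)[a]^3\neq 0$, its coefficient in $[\psi]$ must also vanish. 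Hence $[\psi]=0$, i.e.\ $c-\tfrac12 s\,ds=dg$ for some $g\in C^2(X;\Ar/\Zee)$, which proves the first displayed identity.

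Finally, the defining identity of $Y$ rearranges to $Y=(0,0,a)\centerdot(0,0,b)\centerdot(0,0,a)^{-1}\centerdot(0,0,b)^{-1}$, so in $\mathbf{C}$ this basic commutator equals $(\tfrac12 s\,ds,\,ds,\,0)=\mathbf{D}'(a\cup_1 b,0)$. Proposition~\ref{3.9} shows every basic commutator in $\mathbf{C}$ reduces to this form, so $\operatorname{Im}(\mathbf{D}')$ contains the commutator subgroup. The main obstacle is the operation-theoretic argument in the third paragraph — identifying the small universal group $H^3(K(\Z/2,1)^2;\Ar/\Zee)$ and killing each of its three generators. The diagonal specialization $a=b$ is indispensable for the mixed monomial, since antisymmetry alone does not suffice in characteristic~$2$.
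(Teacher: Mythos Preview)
Your proof is correct and follows essentially the same route as the paper. Both arguments reduce to showing that a natural $\Ar/\Zee$-valued $3$-cocycle $\psi(a,b)$ built from the two $1$-cocycles is a coboundary, invoke Lemma~\ref{key} to interpret $[\psi]$ as an element of $H^3(K(\Zee/2,1)^2;\Ar/\Zee)$, and then kill the three generators by the specializations $a=0$, $b=0$, and $a=b$; your framing in terms of the pre-multiplier $Y$ is just a repackaging of the paper's direct comparison of the first coordinates of $(0,0,a)(0,0,b)$ and $((1/2)sds,ds,0)(0,0,b)(0,0,a)$.
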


\begin{proof}
 We have $$(0,0,a)(0,0,b) = (C_1, ab, a+b)\ \text{and}\  (0,0,b)(0,0,a) = (C_2, ba, b+a)$$ for certain natural cochain functions $C_1(a, b), C_2(a,b) \in C^3(\RR /\ZZ)$ given by the product formula in $\C$ (Definition~\ref{3.1}).
Since $d(a\cup_1b) = ab+ba$, it is easy to see that with $s = a\cup_1b$ we have for some $C = C(a,b)$
$$((1/2)s ds, ds, 0)\ (0,0,b)\ (0,0,a) = (C, ab, a+b).$$
All terms are $\D$-cocycles, hence $dC_1 = (1/2)(ab)^2 = dC \in C^4(\RR / \ZZ)$. Therefore, $\psi(a,b) = C_1(a,b) - C(a,b)$ defines a natural $\RR / \ZZ$-cocycle valued function of the cocycles $a,b \in H^1(\ZZ / 2)$.  By Lemma~\ref{key},  $\psi$ must be a universal cohomology operation.  The only possibilities are $\ZZ/2$ linear combinations of the classes $$(1/2)a^3, (1/2)a^2b = (1/2)ab^2, (1/2)b^3 \in H^3(K(\ZZ /2, 1) \times K(\ZZ / 2, 1); \RR / \ZZ).$$
But if either $a$ or $b$ is 0, the commutator is 0.  Also if $a = b$ the commutator is 0.  Therefore, the only possibility is $[\psi] = 0$, which says $\psi$ is always a coboundary as asserted in the proposition.

\end{proof}

\begin{cor}\label{3.11} $$  \frac{\C}{\{\mathbf{D'} (t,0)\}}= \frac{\oo{C^3}(\RR/\ZZ) \times Z^2(\ZZ/2) \times Z^1(\ZZ/2)}{\{\mathbf{D'}(t,0)\}}$$
is an abelian group.\end{cor}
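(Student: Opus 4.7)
The plan is to deduce Corollary~\ref{3.11} as a formal consequence of Propositions~\ref{3.7} and~\ref{3.10} together with the general group-theoretic fact that any subgroup containing the commutator subgroup is automatically normal and yields an abelian quotient. It therefore suffices to establish two things: (a)~that $S := \{\mathbf{D'}(t,0):t\in C^1(\Zee/2)\}$ is genuinely a subgroup of $\C$, and (b)~that it contains the commutator subgroup $[\C,\C]$.

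For (a), I would observe that the projection $\C'\to C^0(\Zee/2)$ onto the second coordinate is a group homomorphism, since the product formula $(t,x)\centerdot(s,y)=(t+s+xdy,\,x+y)$ of Definition~\ref{defn3.6} has $x+y$ in the second slot. Its kernel is precisely $\{(t,0):t\in C^1(\Zee/2)\}$, which is therefore a subgroup of $\C'$. Because $\mathbf{D'}$ is a group homomorphism by Proposition~\ref{3.7}, the image $S$ of this subgroup is a subgroup of $\C$.

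For (b), Proposition~\ref{3.10} shows that every basic commutator in $\C$ can be written as $\mathbf{D'}(a\cup_1 b,\,0)$ for some $a,b\in Z^1(\Zee/2)$, and so lies in $S$. Since basic commutators generate $[\C,\C]$, we conclude $[\C,\C]\subset S$. Now in any group $G$, a subgroup $H\supset [G,G]$ is automatically normal: for $g\in G$ and $h\in H$ we have $ghg^{-1}=[g,h]\,h\in [G,G]\cdot H\subset H$. Applying this with $G=\C$ and $H=S$ yields the normality of $S$, the existence of the quotient $\C/S$, and the triviality of the image of $[\C,\C]$ there, i.e.\ that $\C/S$ is abelian.

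There is no serious obstacle here, as all the hard work is already contained in Propositions~\ref{3.7} and~\ref{3.10}; what remains is a purely group-theoretic wrap-up. The only easy-to-overlook point is that one must verify $S$ is closed under the product and inversion \emph{before} invoking normality or forming the quotient, which is why I single out the subgroup check in step~(a) via the kernel description.
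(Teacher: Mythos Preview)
Your proposal is correct and takes essentially the same approach as the paper's proof, which is a one-sentence argument: ``The left side is the group $\C$ divided by a subgroup containing commutators.'' You have simply unpacked this sentence carefully, making explicit why $\{\mathbf{D'}(t,0)\}$ is a subgroup (via the kernel description and Proposition~\ref{3.7}) and why a subgroup containing commutators is automatically normal with abelian quotient---details the paper leaves implicit.
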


\begin{proof} The left side is the group $\C$ divided by a subgroup containing commutators. \end{proof}

\subsection{The Definition of the Group $G = G(X)$}\label{sect3.5}
We can now officially define the (compact) abelian group $G$ that is our main object of study. We have constructed a cochain complex 
$$ C^1(\ZZ/2) \times C^0(\ZZ/2) \xrightarrow{\D'} \oo{C^3}(\RR/\ZZ) \times Z^2(\ZZ/2) \times Z^1(\ZZ/2) \xrightarrow{\D} C^4(\R/\Z)$$ of (compact) non-abelian groups.\\

\begin{defn}\label{3.12} $$G =   \frac{{\rm Ker}(\D)}{{\rm Im}(\D')} = \frac{\{(w,p,a) \in \C \mid  dw + (1/2)p^2 = 0\}}{\{\D'(t,x)\}}$$\end{defn}

Since ${\rm Im}(\D')$ contains commutators it is a normal subgroup of ${\rm Ker}(\D)$ with an abelian quotient.\\

Summarizing the construction, we begin with triples $$(w,p,a) \in C^3(\RR/\ZZ) \times Z^2(\ZZ/2) \times Z^1(\ZZ/2)$$
satisfying $dw+(1/2)p^2=0$.  We have the product $$(w, p, a) \centerdot (v, q, b) = (u, p+q+ab, a+b)\ \ \     where$$  $$u =  w+v+\frac{1}{2}\Big[ p\cup_1q+(p+q)\cup_1(ab)+a(a\cup_1b)b\Big] + \frac{1}{4}AB^2.$$
Work modulo all elements $\{(df,0,0)\} $  and get a group.  Divide further by the subgroup consisting of all elements $\{(1/2)tdt, dt, 0)\}$  and get an abelian group.  Finally divide by the subgroup consisting of all elements $\{(1/2)tdt,\ dt,\ dx)\}$ and that produces the group $G$.

\section{ A Filtration of Group $G$}\label{sect4}
\subsection{The Filtration and the Successive Quotients}\label{sect4.1}
Define a filtration of our group, $G  \supset G^1 \supset G^2 \supset 0$, by
 $$G = \{(w,p,a)\} \supset
G^1 = \{(w,p,0)\} \supset G^2 = \{(w,0,0)\} \supset (0,0,0).$$
The notation here for the subgroups means the elements of $G$ $\it{ represented}$ by triples $(w,p,a)$ with  0 entries where indicated.  Now it is trivial from what we have done that $G^1$ and $G^2$ are subgroups. The important point is to clarify their structure and compute the successive quotients in the filtration.\\

 We will see that the successive quotients are exactly the same as the successive quotients in Section~\ref{sect2.1} arising from the Atiyah-Hirzebruch filtration $F \supset F^1 \supset F^2 \supset 0$ of the Pontrjagin dual of $\Omega^{spin}_3(X)$. That is,  $$G^2 \cong H^3(\R/\Z)\hspace{.3in} G^1/G^2 \cong SH^2(\Z/2) \hspace{.3in} G/G^1 \cong H^1(\Z/2),$$ where $SH^2(\ZZ / 2\ZZ) = \{p] \in H^2(\ZZ /2\ZZ)\ \vert\ (1/2)[p]^2 = 0 \in H^4(\RR / \ZZ)$\}.\\

Recall we work with triples $$(w, p, a) \in \C = \oo{C^3}(\R/\Z) \times Z^2(\Z/2) \times Z^1(\Z/2).$$ Triples representing elements of $G$ satisfy $\D(w,p,a) = dw+(1/2)p^2 = 0 \in C^4(\R/\Z).$ 
Triples representing the identity element of $G$ are exactly the triples of the form $((1/2)tdt, dt, dx).$\\

\begin{prop}\label{4.1}  The map $(w,0,0)\mapsto [w]$ defines an isomorphism
$G^2\to H^3(\Ar/\Zee)$.
\end{prop}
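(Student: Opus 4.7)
The plan is to verify that $(w,0,0)\mapsto [w]$ is a well-defined continuous group isomorphism from $G^2$ to $H^3(X;\R/\Z)$. On the level of triples, the constraint $\mathbf{D}(w,0,0)=dw=0$ is well-defined on $\oo{C^3}(\R/\Z)$ (since coboundaries are cocycles) and says exactly that any representative of $w$ is a cocycle, so the class $[w]\in H^3(X;\R/\Z)$ is defined. Moreover, inspection of Definition~\ref{3.1} shows that every correction term in the product formula involves at least one factor from $\{p,q,a,b,A,B\}$, so when all of these vanish the product collapses to $(w,0,0)\centerdot(w',0,0)=(w+w',0,0)$. Hence the assignment $w\mapsto [w]$ is already additive at the level of triples.

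To verify that the map descends to $G^2$ and is injective, I use Proposition~\ref{3.8}, which with $p=a=0$ gives $(w',0,0)^{-1}=(-w',0,0)$. Two triples $(w,0,0)$ and $(w',0,0)$ then represent the same class in $G$ if and only if
$$(w-w',\,0,\,0)=\mathbf{D'}(t,x)=\bigl(\tfrac{1}{2}t\,dt,\ dt,\ dx\bigr)$$
for some $(t,x)\in\C'$. Vanishing of the second and third coordinates forces $dt=0$ and $dx=0$, and with $dt=0$ the quadratic correction $\tfrac{1}{2}t\,dt$ also vanishes. Therefore $w-w'=0$ in $\oo{C^3}(\R/\Z)$, that is, the representing cochains differ by a coboundary, so $[w]=[w']$ in $H^3(X;\R/\Z)$. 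This one step is the only place requiring care: a priori the quadratic term in $\mathbf{D'}$ might have produced genuinely nonzero elements of the $(w,0,0)$-slice of $\C$, but its appearance is forced to be accompanied by $dt$ in the second coordinate, and this is precisely what makes $G^2$ collapse cleanly onto cohomology with no further identifications from $\mathrm{Im}(\mathbf{D'})$.

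Surjectivity is then immediate: any class $[w]\in H^3(X;\R/\Z)$ has a cocycle representative $w\in Z^3(X;\R/\Z)$, and $(w,0,0)$ lies in $\ker(\mathbf{D})$ and represents an element of $G^2$ that maps to $[w]$. Continuity is automatic from the general remarks in Section~\ref{sect1.2}, since the map is induced by the coordinate projection $\C\to\oo{C^3}(\R/\Z)$ followed by passage to cohomology; a continuous bijection between compact Hausdorff abelian groups is a homeomorphism, which completes the identification.
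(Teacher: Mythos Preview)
Your proof is correct and follows essentially the same approach as the paper's proof. The paper is terser---it simply observes that $(w,0,0)\in\ker(\mathbf{D})$ iff $dw=0$, and that $(w,0,0)\in\mathrm{Im}(\mathbf{D}')$ forces $dt=0$, $dx=0$, hence $w=0$ in $\overline{C}^3(\R/\Z)$---while you additionally spell out additivity, surjectivity, and continuity explicitly, but the core argument is identical.
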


\begin{proof}  The result is immediate from the following two facts. First, 
$(w,0,0)\in {\rm Ker}(\mathbf{D})$ if and only if $dw=0$. Second,
$(w,0,0)\sim (0,0,0)\in G^2$ if and only if it is in the image of ${\bold D'}$ if and only if
$(w,0,0)=((1/2)tdt,dt,dx)\in {\bold C}$ if and only if  $(w,0,0)=(0,0,0)\in {\bold C}$
if and only if $w=0\in \bar C^3(\Ar/\Zee)$.
\end{proof}

\begin{prop}\label{4.2} The map $G^1 /\ G^2 \to SH^2(\Z/2)$ defined by  $(w,p,0) \mapsto [p]$ is an isomorphism.\end{prop}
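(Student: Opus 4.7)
The plan is to verify, in order: well-definedness at the cocycle level, well-definedness on $G^1/G^2$, the homomorphism property, surjectivity, and injectivity. Surjectivity is immediate; injectivity is the step where the definition of ${\rm Im}(\D')$ has to be used in a nontrivial way.

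For well-definedness, if $(w,p,0) \in {\rm Ker}(\D)$ then $dw + (1/2)p^2 = 0$, which forces $(1/2)[p]^2 = 0 \in H^4(\R/\Z)$, so $[p] \in SH^2(\Z/2)$. If $(w,p,0)$ and $(w',p',0)$ represent the same class in $G^1/G^2$, they differ in $\C$ by a product of an element of $G^2$ (represented by some $(w'',0,0)$) and an element of ${\rm Im}(\D')$; since the third coordinate of this difference must vanish, only $\D'(t,x) = ((1/2)t\,dt,\,dt,\,dx)$ with $dx = 0$ appear, and these alter the second coordinate only by $dt$. Hence $[p] = [p'] \in H^2(\Z/2)$. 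The homomorphism property is immediate from Definition~\ref{3.1}: the second coordinate of $(w,p,0) \centerdot (v,q,0)$ is $p + q + 0 \cdot 0 = p+q$.

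For surjectivity, given $[p] \in SH^2(\Z/2)$, represent it by a cocycle $p$; since $(1/2)p^2$ is an $\R/\Z$ coboundary, choose $w \in C^3(\R/\Z)$ with $dw = -(1/2)p^2$, and then $(w,p,0) \in {\rm Ker}(\D)$ represents an element of $G^1$ mapping to $[p]$.

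The main point is injectivity. Suppose $(w,p,0)$ represents an element of $G^1$ that maps to $0 \in SH^2(\Z/2)$; then $p = dt$ for some $t \in C^1(\Z/2)$. I would compute $(w,p,0) \centerdot \D'(t,0) = (w,dt,0) \centerdot ((1/2)t\,dt,\,dt,\,0)$ using Definition~\ref{3.1}: the third coordinate is $0$, the second is $dt + dt = 0$, and the first is $u = w + (1/2)t\,dt + (1/2)(dt \cup_1 dt) \in C^3(\R/\Z)$. Since $\D'(t,0) \in {\rm Im}(\D')$ projects to the identity in $G$, the class of $(w,p,0)$ coincides in $G$ with the class of $(u,0,0) \in G^2$, so its image in $G^1/G^2$ is trivial, completing the proof.
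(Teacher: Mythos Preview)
Your proof is correct and follows essentially the same route as the paper's. The paper's proof is terser: for injectivity it writes $(w,dt,0)((1/2)tdt,dt,0) = (w+(1/2)tdt,0,0)$, silently absorbing the $(1/2)(dt\cup_1 dt)$ term you carry along, since that term is a coboundary in $\overline{C}^3(\R/\Z)$ by (SL4).
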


\begin{proof} The map is certainly well-defined since relations $((1/2)tdt, dt, 0)$ in $G^1$ and elements of $G^2$ map to 0. The map is surjective by definition of $SH^2(\Z/2)$. An element of the kernel would be $(w, dt, 0)$, with $dw +(1/2)(dt)^2 = 0$.  The element $(w,dt,0)$ is equivalent to
$$(w, dt, 0)((1/2)tdt, dt, 0) = (w+(1/2)tdt, 0,0) \in G^2.$$ \end{proof}  

\begin{prop}\label{4.3} The map $G/G^1 \to H^1(\Z/2)$ defined by $(w,p,a) \mapsto [a]$ is an isomorphism.\end{prop}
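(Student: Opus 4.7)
The plan is to exhibit $\phi(w,p,a)=[a]$ as a descent of the obvious third-coordinate map. By Definition~\ref{3.1} the projection $\pi\colon \mathbf{C}\to Z^1(X;\ZZ/2)$, $(w,p,a)\mapsto a$, is a group homomorphism from $\mathbf{C}$ to an abelian group (sending the product on $\mathbf{C}$ to addition of cocycles). Composing with the quotient $Z^1(X;\ZZ/2)\twoheadrightarrow H^1(X;\ZZ/2)$ gives a homomorphism $\overline{\pi}\colon \mathbf{C}\to H^1(X;\ZZ/2)$. Since $\overline{\pi}(\mathbf{D'}(t,x))=[dx]=0$, this kills $\mathrm{Im}(\mathbf{D'})$, and since it vanishes on every triple of the form $(w,p,0)$, it further kills $G^1$. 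Hence $\overline{\pi}$ descends to a well-defined homomorphism $\phi\colon G/G^1\to H^1(X;\ZZ/2)$.

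For surjectivity, I would note that any cocycle $a\in Z^1(X;\ZZ/2)$ is realized by the triple $(0,0,a)$, which lies in $\mathrm{Ker}(\mathbf{D})$ since $\mathbf{D}(0,0,a)=0$, and clearly maps to $[a]$ under $\phi$.

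For injectivity, suppose the class of $(w,p,a)\in \mathrm{Ker}(\mathbf{D})$ satisfies $[a]=0$ in $H^1(X;\ZZ/2)$, and write $a=dx$ for some $x\in C^0(X;\ZZ/2)$. Then $\mathbf{D'}(0,x)=(0,0,dx)=(0,0,a)$, so $(0,0,a)$ already represents the identity of $G$. I would then multiply $(w,p,a)$ by the inverse of $(0,0,a)$ in $\mathbf{C}$; by Proposition~\ref{3.8} the inverse has third coordinate $a$, and by Definition~\ref{3.1} the product has third coordinate $a+a=0\in \ZZ/2$. The resulting triple represents the same class of $G$ as $(w,p,a)$ but now has vanishing third coordinate, so this class lies in $G^1$. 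The only real obstacle is bookkeeping in the third coordinate; all the nontrivial algebra (that $\mathbf{C}$ is a group, that $\mathrm{Im}(\mathbf{D'})$ is normal, and the explicit inverse formula of Proposition~\ref{3.8}) has already been done.
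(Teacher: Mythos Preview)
Your proof is correct and follows essentially the same approach as the paper. The only cosmetic difference is in the injectivity step: the paper observes directly that $(w,p,dx) = (w,p,0)\cdot(0,0,dx)$ with $(0,0,dx) = \mathbf{D'}(0,x)$ trivial in $G$, whereas you multiply $(w,p,a)$ by $(0,0,a)^{-1}$ to kill the third coordinate; these are the same move.
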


\begin{proof}The map is well-defined since relations $((1/2)tdt, dt, dx)$ and elements of $G^1$ map to 0. It is obviously surjective since $(0,0,a) \mapsto [a]$. An element in the kernel would be $(w,p,dx) = (w,p,0)(0,0,dx) \equiv (w,p,0) \in G^1$.\end{proof}

\subsection{The  Extensions for the Filtration}\label{sect4.2}

We can easily describe the structure of the extension $$ 0 \to SH^2(\Z/2) = G^1 / G^2 \to G / G^2 \to G /G^1 =  H^1(\Z/2) \to 0.$$ Consider the abelian group $SH^2(\Z/2) \ltimes H^1(\Z/2)$ defined as $$SH^2(\Z/2) \times H^1(\Z/2)\ \text{with product}\ ([p], [a]) ([q], [b]) = ([p] + [q] + [a][b], [a]+[b]).$$ Essentially the same proof as that of Proposition~\ref{4.2} gives the following.

\begin{prop}\label{4.4}The map $G/G^2 \to SH^2(\Z/2) \ltimes H^1(\Z/2)$ defined by $(w, p,a) \mapsto ([p], [a])$ is an isomorphism.\end{prop}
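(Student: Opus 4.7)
The plan is to adapt the proof of Proposition~\ref{4.2}, verifying in turn that the map $\phi\colon G/G^2 \to SH^2(\ZZ/2) \ltimes H^1(\ZZ/2)$ sending $(w,p,a) \mapsto ([p],[a])$ is well-defined, a homomorphism, and a bijection. Well-definedness is almost built into the construction: passing to $G^2$ leaves $p$ and $a$ unchanged, while elements of $\operatorname{Im}(\mathbf{D}')$ modify $p$ and $a$ only by the coboundaries $dt$ and $dx$. The containment $[p] \in SH^2(\ZZ/2)$ is forced by the cocycle relation $dw + (1/2)p^2 = 0$, which exhibits $(1/2)p^2$ as an $\RR/\ZZ$-coboundary.

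The homomorphism property is immediate from Definition~\ref{3.1}: the second and third coordinates of $(w,p,a)\centerdot(v,q,b)$ are $p+q+ab$ and $a+b$, and their cohomology classes give exactly the semidirect-product formula $([p]+[q]+[a][b],\ [a]+[b])$. Surjectivity is equally direct: given $([p_0],[a_0])$ with $[p_0] \in SH^2(\ZZ/2)$, the defining hypothesis $(1/2)[p_0]^2 = 0$ in $H^4(X;\RR/\ZZ)$ produces a cochain $w_0$ with $dw_0 = -(1/2)p_0^2$, so $(w_0, p_0, a_0)$ lies in $\operatorname{Ker}(\mathbf{D})$ and maps to $([p_0],[a_0])$.

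The substantive step, mirroring the template of Proposition~\ref{4.2}, is injectivity. Suppose $(w,p,a) \in \operatorname{Ker}(\mathbf{D})$ is sent to $(0,0)$, so $p = dt$ and $a = dx$ for some cochains $t, x$. The strategy is to show $(w, dt, dx)$ equals $\mathbf{D}'(t,x) \centerdot (w', 0, 0)$ for a genuine cocycle $w'$, which identifies it with an element of $G^2$ modulo $\operatorname{Im}(\mathbf{D}')$. When Definition~\ref{3.1} is applied to this candidate product, most terms collapse: because the second factor has $a_2 = 0$, the $a_1 a_2$ contribution, each $\cup_1$ cross term in $u$, and the $A_1 A_2^2$ correction all vanish, and what remains is
$$\mathbf{D}'(t,x) \centerdot (w', 0, 0) = \bigl((1/2)t\, dt + w',\ dt,\ dx\bigr).$$
This forces $w' = w - (1/2)t\, dt$, and the cocycle condition $dw + (1/2)(dt)^2 = 0$ in $C^4(\RR/\ZZ)$ then gives $dw' = 0$. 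The main delicate point is the $\RR/\ZZ$ sign arithmetic, where $-1/2 = 1/2$ makes the cancellation work exactly as in the proof of Proposition~\ref{4.2}; no deeper obstacle is anticipated.
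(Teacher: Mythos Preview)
Your proof is correct and follows exactly the approach the paper indicates: the paper simply says ``essentially the same proof as that of Proposition~\ref{4.2} gives the following'' without further detail, and you have carried out precisely that adaptation, checking well-definedness, the homomorphism property, surjectivity, and then reducing an element of the kernel to $G^2$ by multiplying by $\mathbf{D}'(t,x)$.
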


We next characterize the extension $$0 \to H^3(\RR/\ZZ) = G^2 \to G^1 \to G^1 / G^2 = SH^2(\ZZ/2) \to 0.$$ It follows directly from our construction of $G$ that $$G^1 \simeq \frac{\{(w,p) \in \bar{C}^3(\R/\Z) \times Z^2(\Z/2) \vert dw = (1/2)p^2\}} { \{ (1/2)tdt, dt) \} }.$$ The product is given by $(w,p)(v,q) = (w+v+(1/2)p \cup_1 q, p+q)$. \\

In general, if $V$ is a $\Z/2$ vector space then isomorphism classes of extensions $$0 \to K \to E \to V \to 0$$ correspond bijectively to homomorphisms $\alpha \colon V \to K / 2K$.  The correspondence is defined as follows. Given an extension, lift $v \in V$ to $\hat{v} \in E$ and set $\alpha(v) = 2\hat{v} \in K/2K$.  In our case, we lift $p \in SH^2(\Z/2)$ to $(w,p) \in G^1$.  We have $(w,p)(w,p) = (2w, 0)$.  If $b\colon H^3(\R/ \Z) \to  H^4(\Z/2)$ is the Bockstein for the coefficient sequence $$0 \to  \Z/2 \xrightarrow{1/2} \R / \Z \xrightarrow{2} \R / \Z \to 0, $$we have $b(2w) = p^2$. We conclude the following.

\begin{prop}\label{4.5}The characteristic homomorphism for the extension $$0 \to H^3(\RR/\ZZ) \to G^1 \to SH^2(\ZZ/2) \to 0$$ is given by $\alpha(p) = b^{-1}(p^2) \in H^3(\R/\Z) / 2H^3(\R/\Z)$. \end{prop}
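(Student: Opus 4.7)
The plan is to apply, in the explicit cochain model
$$G^1 \;\simeq\; \frac{\{(w,p)\in \bar{C}^3(\RR/\ZZ)\times Z^2(\ZZ/2)\ \mid\ dw = (1/2)p^2\}}{\{((1/2)tdt,\,dt)\mid t\in C^1(\ZZ/2)\}}$$
recorded just above the proposition, the standard extension-theoretic prescription $\alpha([p]) = 2\,\widehat{[p]}\in H^3(\RR/\ZZ)/2H^3(\RR/\ZZ)$ to an explicit lift $\widehat{[p]}$ of $[p]$. Given $[p]\in SH^2(\ZZ/2)$, I would first pick a cocycle representative $p\in Z^2(\ZZ/2)$; the defining condition $(1/2)[p]^2=0$ in $H^4(\RR/\ZZ)$ says exactly that $(1/2)p^2$ is a coboundary, so there exists $w\in C^3(\RR/\ZZ)$ with $dw=(1/2)p^2$, and the pair $(w,p)$ is a lift of $[p]$ to $G^1$.

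Next I would square the lift. By Definition~\ref{3.1}, specialized to $a=b=0$,
$$(w,p)\centerdot(w,p) \;=\; \bigl(2w + (1/2)\,p\cup_1 p,\ 2p,\ 0\bigr),$$
whose second coordinate vanishes in $Z^2(\ZZ/2)$. The identity $(1/2)(p\cup_1p) = d(P/4)$ invoked in the inverse computation of Proposition~\ref{3.8} (where $P$ denotes the special integral lift of $p$) shows that the $(1/2)\,p\cup_1p$ term is a coboundary in $C^3(\RR/\ZZ)$, hence vanishes in $\bar{C}^3(\RR/\ZZ)$. Therefore $(w,p)^2 = (2w,0)\in G^2$, which under the isomorphism of Proposition~\ref{4.1} represents the class $[2w]\in H^3(\RR/\ZZ)$.

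It remains to identify $[2w]$ with $b^{-1}([p]^2)$ directly from the definition of the Bockstein for $0\to\ZZ/2\xrightarrow{1/2}\RR/\ZZ\xrightarrow{2}\RR/\ZZ\to 0$. To evaluate $b$ on $[2w]$ one halves the $\RR/\ZZ$-cocycle $2w$ to recover $w$, applies $d$, and reinterprets the output as a $\ZZ/2$ cocycle via the inclusion $1/2$; since by construction $dw = (1/2)p^2$, this procedure returns the cocycle $p^2$, so $b([2w]) = [p]^2$. Hence $[2w]$ is a preimage of $[p]^2$ under $b$, i.e.\ $\alpha([p]) = [2w] = b^{-1}([p]^2)\in H^3(\RR/\ZZ)/2H^3(\RR/\ZZ)$, as claimed. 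The only step requiring any input beyond unwinding definitions is the vanishing of $(1/2)\,p\cup_1p$ in $\bar{C}^3(\RR/\ZZ)$, and that is already established via (SL4) in the proof of Proposition~\ref{3.8}; well-definedness modulo $2H^3(\RR/\ZZ)$ is automatic from the abstract classification of extensions of a $\ZZ/2$ vector space.
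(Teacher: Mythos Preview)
Your proof is correct and follows essentially the same approach as the paper: lift $[p]$ to $(w,p)\in G^1$, compute $(w,p)^2 = (2w,0)$ (using that $(1/2)p\cup_1 p$ is a coboundary via (SL4)), and identify $[2w]$ as a Bockstein preimage of $[p]^2$. Your write-up is slightly more detailed than the paper's, which simply asserts $(w,p)(w,p)=(2w,0)$ and $b(2w)=p^2$ without spelling out the $p\cup_1 p$ step or the Bockstein computation.
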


\subsection{The Group $G$ as an Extension}

We now want to examine the extension $$ 0 \to G^1 \to G \to H^1(\Z/2) \to 0 $$ of Proposition~\ref{4.3} in greater detail. It turns out that the element represented as $(0,0,a)$ in our group $G$ depends only on the cohomology class $\oo{a} \in H^1(Z/2)$.  This is the content of the proposition below.\\

\begin{prop}\label{4.6}
$(0,0,a) \equiv (0,0,a+dx) \pmod {image(\bold{D'})}$. \end{prop}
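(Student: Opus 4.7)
The plan is to exhibit an explicit element of $\text{Im}(\mathbf{D'})$ whose product with $(0,0,a)$ equals $(0,0,a+dx)$ in $\mathbf{C}$, that is, with second and third coordinates matching on the nose and first coordinates differing by a coboundary in $dC^2(\mathbb{R}/\mathbb{Z})$. The natural candidate, suggested by the requirement that the third coordinate of the correcting element be $dx$, is $\mathbf{D'}(ax, x) \in \text{Im}(\mathbf{D'})$.

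Since $a$ is a cocycle, the Leibniz rule (with mod 2 coefficients) gives $d(ax) = a\,dx$, so
$$\mathbf{D'}(ax, x) = \Bigl(\tfrac{1}{2}(ax)(a\,dx),\ a\,dx,\ dx\Bigr).$$
Applying the product formula of Definition~\ref{3.1} to $(0,0,a) \centerdot \mathbf{D'}(ax, x)$, the third coordinate is $a + dx$ as desired, and the second coordinate is $0 + a\,dx + a \cdot dx = 2\,a\,dx = 0$ in $\mathbb{Z}/2$. So the second and third coordinates already agree with those of $(0,0, a+dx)$, and the only thing to check is that the first coordinate $u \in C^3(\mathbb{R}/\mathbb{Z})$ produced by the formula is a coboundary.

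For this, the key observation is that both $(0,0,a+dx)$ and $(0,0,a)\centerdot \mathbf{D'}(ax,x)$ lie in $\text{Ker}(\mathbf{D})$ (the first trivially, the second by Claim~\ref{DD'claim} and Proposition~\ref{3.2}), and their second coordinates coincide, so the difference $u$ of first coordinates is automatically an $\mathbb{R}/\mathbb{Z}$-cocycle. Moreover, $u = u(a, x)$ is visibly a natural function of the cocycle $a$ and the cochain $x$, built by specific universal cochain-level formulas (cup products, $\cup_1$ products, special integral lifts, and the coefficient morphisms $1/2$ and $1/4$). Setting $x = 0$ sends every term in the formula to $0$ (since then $dx = 0$ and $ax = 0$), so $u(a, 0) = 0$. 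By Lemma~\ref{key} applied to the natural cocycle-valued operation $u$ in the cocycle variable $a$ and cochain variable $x$, the cohomology class $[u(a,x)] = [u(a,0)] = 0 \in H^3(X;\mathbb{R}/\mathbb{Z})$, so $u$ is a coboundary. This exhibits $(0,0,a)$ and $(0,0,a+dx)$ as equal in $\mathbf{C}$ modulo $\text{Im}(\mathbf{D'})$, proving the proposition.

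The only mildly delicate point is bookkeeping — making sure the full $u$ produced by Definition~\ref{3.1} really is natural in $(a, x)$ as a cocycle-valued operation (so that Lemma~\ref{key} applies), rather than a calculation that needs to be done term-by-term. Once this is observed, Lemma~\ref{key} does the work and no explicit identification of $u$ as a coboundary is required.
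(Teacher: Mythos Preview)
Your proof is correct and essentially identical to the paper's own argument: the paper also multiplies $(0,0,a)$ by the relation $\mathbf{D'}(ax,x) = ((1/2)(ax)\,d(ax),\ d(ax),\ dx)$, observes the second and third coordinates come out as $(0,\ a+dx)$, and then invokes Lemma~\ref{key} with the cochain variable $x$ set to $0$ to conclude the first coordinate is a coboundary.
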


\begin{proof}  Note $d(ax) = adx$. Using a relation in $G$, we get $$(0,0,a) \equiv (0,0,a) \centerdot ( (1/2)axd(ax), d(ax), dx) = (C, 0, a+dx)$$  where $C = C(a,x) \in C^3(\RR / \ZZ)$ is a natural cochain operation given in Definition~\ref{3.1}.  We want to prove $C$ is a coboundary. Since all elements are $\D$-cocycles, we have $dC = 0$. By Lemma~\ref{key}, we know that $C(a, x)$ and $C(a,0)$ are cohomologous. But if $x = 0$, obviously $C = 0$. Thus, $C(a,x)$ is always a coboundary.
\end{proof}

It follows that products $(0,0,a)(0,0,b) \in G$ only depend on $[a], [b] \in H^1(\ZZ/2)$.  We can now give a group extension cocycle $$z(\alpha, \beta):H^1(\ZZ/2) \times H^1(\ZZ/2) \to G^1$$ for the extension 
$$ 0 \to G^1 \to G \to H^1(\Z/2) \to 0 .$$  Recall in group $G$ we have the computations $$(w,p,a)(v,q,b) = (w,p,0)(0,0,a)(v,q,0)(0,0,b) = (w,p,0)(v,q,0)(0,0,a)(0,0,b)$$   $$ and\ \ \ (0,0,a)(0,0,b) = ((1/2)a(a\cup_1 b)b + (1/4)AB^2, ab, 0)(0,0, a+b) .$$ As in the previous section, we will abbreviate names for elements of $G^1$ as pairs $(w,p)$, rather than $(w,p,0)$.

\begin{prop}\label{4.7} With $(w,p), (v,q) \in G^1$ and $\alpha, \beta \in H^1(\Z/2)$, we have that the group $G$ is isomorphic to the set $G^1 \times H^1(\Z/2)$, with the product
$$((w,p), \alpha)((v,q), \beta) = ((w,p)(v,q) z(\alpha, \beta), \alpha + \beta)$$
where $z(\alpha, \beta)) \in G^1$ is represented by $((1/2)a(a\cup_1b)b + (1/4)AB^2, ab).$ Here $a$ and $b$ are any cocycle representatives for $\alpha$ and $\beta$, respectively. \end{prop}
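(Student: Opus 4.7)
The plan is to produce an explicit bijection $\Phi \colon G^1 \times H^1(\Z/2) \to G$ and then transport the group law of $G$ through $\Phi$ to recover the stated formula. Define
\[
\Phi\bigl((w,p),\alpha\bigr) = [(w,p,0)\centerdot(0,0,a)] \in G,
\]
where $a$ is any $1$-cocycle representing $\alpha$; this is well-defined because Proposition~\ref{4.6} says $[(0,0,a)]$ depends only on $[a]$ in $G$. A direct application of Definition~\ref{3.1} shows $(w,p,0)\centerdot(0,0,a)=(w,p,a)$ in $\C$, so $\Phi$ is surjective: every $[(w,p,a)]\in G$ equals $\Phi\bigl((w,p),[a]\bigr)$. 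For injectivity, suppose $\Phi((w,p),\alpha)=\Phi((w',p'),\alpha')$; the projection $G\to G/G^1\cong H^1(\Z/2)$ of Proposition~\ref{4.3} forces $\alpha=\alpha'$, so by Proposition~\ref{4.6} we may arrange $a=a'$. The resulting relation $(w',p',a)=(w,p,a)\centerdot\D'(t,x)$ in $\C$ has third coordinate $a=a+dx$, forcing $dx=0$; the same relation applied to $(w,p,0)\centerdot\D'(t,x)$ then yields $[(w,p,0)]=[(w',p',0)]$ in $G^1$. Thus $\Phi$ is a bijection of sets.

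To compute the group law through $\Phi$, I expand
\[
[(w,p,a)]\centerdot[(v,q,b)] = [(w,p,0)\centerdot(0,0,a)\centerdot(v,q,0)\centerdot(0,0,b)]
\]
in $G$. The commutation identity at the start of \S\ref{sect3.4}, applied to the pair $(v,q,0)\centerdot(0,0,a)$, has correction term $(df,0,0)$ with $f=q\cup_2 0=0$, so I may swap $(0,0,a)$ and $(v,q,0)$ exactly in $\C$ to obtain $[(w,p,0)\centerdot(v,q,0)\centerdot(0,0,a)\centerdot(0,0,b)]$. The first two factors multiply to the $G^1$-product $(w,p)\centerdot(v,q)$. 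For the remaining two, Definition~\ref{3.1} gives
\[
(0,0,a)\centerdot(0,0,b) = \bigl(\tfrac{1}{2}a(a\cup_1 b)b+\tfrac{1}{4}AB^2,\ ab,\ a+b\bigr),
\]
and another application of Definition~\ref{3.1} factors this as $(\tilde z(a,b),ab,0)\centerdot(0,0,a+b)$ in $\C$, where $\tilde z(a,b)=\tfrac{1}{2}a(a\cup_1 b)b+\tfrac{1}{4}AB^2$. Since $(0,0,a)\centerdot(0,0,b)\in\Ker(\D)$ by Proposition~\ref{3.2} and $\D$ ignores the third coordinate, $(\tilde z(a,b),ab,0)\in\Ker(\D)$, so it represents an element of $G^1$. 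Translating through $\Phi^{-1}$ recovers exactly the asserted product formula with $z(\alpha,\beta)$ represented by $(\tilde z(a,b),ab)$.

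The main remaining issue, and the real technical obstacle, is showing $z(\alpha,\beta)\in G^1$ is independent of the chosen representatives $a,b$. Suppose $a'=a+dx$ and $b'=b+dy$. By Proposition~\ref{4.6}, $(0,0,a)=(0,0,a')$ and $(0,0,b)=(0,0,b')$ in $G$; and since $(a+b)$ and $(a'+b')$ differ by $d(x+y)$, the same proposition gives $(0,0,a+b)=(0,0,a'+b')$ in $G$. Combining these with the factorization established above,
\[
(\tilde z(a,b),ab,0)\centerdot(0,0,a+b) = (0,0,a)\centerdot(0,0,b) = (0,0,a')\centerdot(0,0,b') = (\tilde z(a',b'),a'b',0)\centerdot(0,0,a'+b')
\]
in $G$, and right-cancelling by the common element $(0,0,a+b)=(0,0,a'+b')$ yields $(\tilde z(a,b),ab,0)=(\tilde z(a',b'),a'b',0)$ in $G$. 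Both sides lie in the subgroup $G^1$, so this equality holds in $G^1$, establishing the well-definedness of $z(\alpha,\beta)$ and completing the proof.
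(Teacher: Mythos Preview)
Your argument is correct and follows the same route as the paper: the paper's ``proof'' is the paragraph preceding the proposition, which records the two computations $(w,p,a)(v,q,b)=(w,p,0)(v,q,0)(0,0,a)(0,0,b)$ in $G$ and $(0,0,a)(0,0,b)=((1/2)a(a\cup_1 b)b+(1/4)AB^2,\,ab,\,0)(0,0,a+b)$, together with the observation (from Proposition~\ref{4.6}) that $(0,0,a)(0,0,b)$ depends only on $[a],[b]$. You have made the same steps explicit by naming the bijection $\Phi$, checking injectivity directly, and spelling out the cancellation argument for the well-definedness of $z(\alpha,\beta)$; the only cosmetic point is that your phrase ``the same relation applied to $(w,p,0)\centerdot\D'(t,x)$'' would read more clearly as the direct computation $(w,p,0)\centerdot\D'(t,0)=(w',p',0)$, which is what your identification of $w',p'$ actually gives once $dx=0$.
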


Proposition ~\ref{4.7} yields a method for constructing (continuous) homomorphisms $G \to K$, where $K$ is a (compact) abelian group.

\begin{prop}\label{4.8} A homomorphism $\rho\colon G \to K$ is equivalent to a pair consisting  of a homomorphism $\rho^1\colon G^1 \to K$ and a function $\lambda^1\colon H^1(\ZZ/ 2) \to K$ that satisfy $$ \lambda^1(\alpha) + \lambda^1(\beta) = \lambda^1(\alpha + \beta) + \rho^1((1/2)a(a\cup_1b)b + (1/4)AB^2, ab) \in K.$$   In the notation of Proposition~\ref{4.7}, the homomorphisms are related by $$\rho((w,p), \alpha) = \rho^1((w,p)) + \lambda^1(\alpha).$$ Map $\rho$ is continuous if and only if $\rho^1$ and $\lambda^1$ are continuous.\end{prop}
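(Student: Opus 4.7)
The plan is to recognize that this proposition is the familiar correspondence between homomorphisms out of a group extension and pairs (homomorphism on kernel, set-theoretic section satisfying a cocycle identity), applied to the explicit extension $0 \to G^1 \to G \to H^1(\ZZ/2) \to 0$ together with its extension cocycle $z(\alpha,\beta)$ identified in Proposition~\ref{4.7}. Since $K$ is abelian, the formalism simplifies considerably and the whole argument reduces to rearranging the product formula of Proposition~\ref{4.7}.

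First, suppose $\rho\colon G \to K$ is a homomorphism. Set $\rho^1 = \rho|_{G^1}$, which is automatically a homomorphism, and define $\lambda^1(\alpha) = \rho((0,0,a))$ for any cocycle representative $a$ of $\alpha$; Proposition~\ref{4.6} guarantees this is well-defined. From the identity
$$(0,0,a)\centerdot(0,0,b) = \bigl(\tfrac{1}{2}a(a\cup_1 b)b + \tfrac{1}{4}AB^2,\ ab,\ 0\bigr)\centerdot (0,0,a+b)$$
recorded just before Proposition~\ref{4.7}, applying $\rho$ and using additivity in $K$ produces exactly the claimed cocycle identity
$$\lambda^1(\alpha)+\lambda^1(\beta) = \rho^1\bigl(\tfrac{1}{2}a(a\cup_1 b)b + \tfrac{1}{4}AB^2,\ ab\bigr) + \lambda^1(\alpha+\beta).$$

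Conversely, given $(\rho^1,\lambda^1)$ satisfying the identity, define $\rho((w,p),\alpha) = \rho^1(w,p) + \lambda^1(\alpha)$. I would then verify that $\rho$ respects the product of Proposition~\ref{4.7}: expanding both sides of $\rho\bigl(((w,p),\alpha)((v,q),\beta)\bigr)$ produces $\rho^1((w,p)(v,q)) + \rho^1(z(\alpha,\beta)) + \lambda^1(\alpha+\beta)$ on the one hand and $\rho^1(w,p) + \lambda^1(\alpha) + \rho^1(v,q) + \lambda^1(\beta)$ on the other, and these agree by the cocycle identity for $\lambda^1$ together with additivity of $\rho^1$ and commutativity of $K$. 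The two constructions are visibly inverse to each other. For continuity, note that under the bijection of Proposition~\ref{4.7} the group $G$ is homeomorphic to $G^1 \times H^1(\ZZ/2)$ (the product topology coincides with the topology of $G$ since $G^1 \hookrightarrow G$ is closed and $G \to H^1(\ZZ/2)$ is a continuous surjection of compact groups with a continuous section $\alpha \mapsto (0,0,a)$ obtainable from a continuous cocycle section $H^1(\ZZ/2) \to Z^1(\ZZ/2)$), so $\rho$ is continuous if and only if its two ``coordinate components'' $\rho^1$ and $\lambda^1$ are.

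The main obstacle is the continuity statement, because it requires knowing that the set-theoretic bijection of Proposition~\ref{4.7} is actually a homeomorphism — in particular that a continuous section of $G \to H^1(\ZZ/2)$ exists. The algebraic part of the argument, by contrast, is essentially formal once the cocycle $z(\alpha,\beta)$ is in hand.
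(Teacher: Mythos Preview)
Your proof is correct and is precisely the standard extension-cocycle argument that the paper leaves implicit; the paper states Proposition~\ref{4.8} without proof, simply noting that it follows from Proposition~\ref{4.7}. One small simplification for the continuity statement: you do not actually need a continuous section $H^1(\ZZ/2) \to Z^1(\ZZ/2)$, since the continuous map $Z^1(\ZZ/2) \to G$, $a \mapsto (0,0,a)$, is constant on $B^1$-cosets by Proposition~\ref{4.6} and therefore descends to a continuous map $H^1(\ZZ/2) \to G$ by the universal property of the quotient topology on compact groups; composing with multiplication then gives the continuous bijection $G^1 \times H^1(\ZZ/2) \to G$, automatically a homeomorphism between compact Hausdorff spaces.
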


It is both interesting and useful to know when an element $[a] \in H^1(\Z/2)$ lifts to an element of $G$ of order 2 or to an element of order 4.  It is easy to compute $(0,0,a)^8 = (0,0,0)$.

\begin{prop}\label{4.9} (i)  $[a] \in H^1(\Z/2)$ lifts to an element of order 2 in $G$ if and only if $[a]^2 = 0$\\

(ii) $[a] \in H^1(\Z/2)$ lifts to an element of order 4 in $G$ if and only if $[a]^2 \not= 0$ and $P([a]^2) = 2[p]^2 \in H^4(\Z/4)$ for some $p \in SH^2(\Z/2).$  (Here $P$ is the Pontrjagin square and 2 is the coefficient morphism $\Z/2 \to \Z/4$. In particular, it is necessary but not sufficient that $[a]^4 = 0$.)\end{prop}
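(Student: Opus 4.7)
The plan is to explicitly compute $(w,p,a)^2$ and $(w,p,a)^4$ using Proposition~\ref{3.8}, absorbing the terms $(1/2)p\cup_1 p$ and $(1/2)a^2\cup_1 a^2$ as coboundaries via (SL4). In $\oo{C^3}(\R/\Z)\times Z^2(\Z/2)\times Z^1(\Z/2)$ this yields
$$
(w,p,a)^2 \;=\; \bigl(2w - \tfrac{1}{4}A^3,\ a^2,\ 0\bigr), \qquad (w,p,a)^4 \;=\; \bigl(4w - \tfrac{1}{2}A^3,\ 0,\ 0\bigr),
$$
where $4w \in Z^3(\R/\Z)$ because $2\cdot(1/2):\Z/2\to\R/\Z$ is the zero map, and similarly $(1/2)A^3 \in Z^3(\R/\Z)$ since $d((1/2)A^3) = A^4 \equiv 0$ under $\Z \to \R/\Z$, $1 \mapsto 1$.

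For part (i), the order-$\le 2$ condition $(w,p,a)^2 \in {\rm Im}(\D')$ combined with $\D'(t,x)=((1/2)tdt,\,dt,\,dx)$ forces $a^2 = dt$, hence $[a]^2 = 0$. Conversely, given $[a]^2 = 0$, choose $t$ with $dt = a^2$ and form the cocycle $\eta_0 := \tfrac14 A^3 + \tfrac12 ta^2 \in Z^3(\R/\Z)$ (the identity $(1/2)A^4 = (1/2)a^4$ makes $d\eta_0 = 0$). I claim the Bockstein $\delta_2\colon H^3(\R/\Z) \to H^4(\Z/2)$ of $0 \to \Z/2 \xrightarrow{(1/2)} \R/\Z \xrightarrow{2} \R/\Z \to 0$ satisfies $\delta_2[\eta_0] = 0$: using the half-lift $\hat\eta_0 = (1/8)A^3 + (1/4)TA^2$, where $T \in C^1(\Z)$ is the special $\{0,1\}$-valued integer lift of $t$, a direct cochain calculation (with $dT = A^2 + 2Q$) gives $\delta_2[\eta_0] = [a]^4 + [q][a]^2$ where $q = Q \bmod 2$, and both summands vanish since $[a]^2 = 0$. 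Therefore $[\eta_0] = 2[w]$ for some cocycle $w$, and $(w,0,a) \in G$ satisfies $(w,0,a)^2 = \D'(t,0)$ in $G$, so has order dividing $2$.

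For part (ii), interpret the statement as: some lift of $[a]$ has order exactly $4$. Necessity of $[a]^2 \ne 0$ is immediate from part (i). The condition $(w,p,a)^4 = 0$ in $G$, after matching $dt = dx = 0$, reduces to the equation $4[w] = (1/2)[A^3]$ in $H^3(X;\R/\Z)$. The central tool is the Bockstein $\delta_4\colon H^3(\R/\Z) \to H^4(\Z/4)$ of $0 \to \Z/4 \xrightarrow{(1/4)} \R/\Z \xrightarrow{4} \R/\Z \to 0$. The quarter-lift $(1/8)A^3$ of $(1/2)A^3$ gives $\delta_4[(1/2)A^3] = [A^4 \bmod 4] = \mathcal P([a]^2)$, since $dA^2 = 0$ collapses the cochain formula to $\mathcal P(a^2) = A^4 \bmod 4$; using $w$ itself as quarter-lift of $4w$ together with $dw = -(1/2)p^2 = (1/4)(-2p^2 \bmod 4)$ yields $\delta_4[4w] = -2[p]^2 = 2[p]^2 \in H^4(\Z/4)$ (as $-2 \equiv 2 \pmod 4$). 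Hence $4[w] = (1/2)[A^3]$ is solvable if and only if $\mathcal P([a]^2) = 2[p]^2$ in $H^4(\Z/4)$; when this holds, $[4w - (1/2)A^3] \in \ker\delta_4 = 4H^3(\R/\Z)$, say $= 4[z]$, and replacing $w$ with $w - z$ produces a lift $(w-z, p, a)$ with $(w-z,p,a)^4 = 0$ and $(w-z,p,a)^2 \ne 0$, hence of order exactly $4$. The hardest step is the Pontrjagin-square identification $\delta_4[(1/2)A^3] = \mathcal P([a]^2)$, which is precisely the mechanism bringing the Pontrjagin square into the criterion.
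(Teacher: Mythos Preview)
Your proof is correct and follows essentially the same route as the paper: compute $(w,p,a)^2=(2w-(1/4)A^3,a^2,0)$ and $(w,p,a)^4=(4w-(1/2)A^3,0,0)$, then reduce each vanishing condition to a divisibility statement in $H^3(\R/\Z)$ detected by a Bockstein. The only cosmetic difference is that you work directly with the Bocksteins $\delta_2\colon H^3(\R/\Z)\to H^4(\Z/2)$ and $\delta_4\colon H^3(\R/\Z)\to H^4(\Z/4)$, whereas the paper uses the integral Bockstein $\beta\colon H^3(\R/\Z)\to H^4(\Z)$ and then reduces modulo $2$ or $4$; these are equivalent since $\ker\delta_n = nH^3(\R/\Z)$ and $\delta_n = (\beta \bmod n)$.
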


\begin{proof}  (i) We have $(w,p,a)^2 = (2w - (1/4)A^3, a^2, 0)$.  For this element to represent $(0,0,0) \in G$, it is first necessary that $a^2 = dt$.  Conversely, if $a^2 = dt$ then $(0,0,a)^2 = ((1/2)tdt - (1/4)A^3, 0, 0) \in G$.  The element $w = (1/2)tdt - (1/4)A^3$ is an $\R/\Z$ cocycle.   If $[w] = 2[v] \in H^3(\R/\Z) $, then $(-v, 0, a)^2 = (0,0,0).$

A class $[w] \in H^3(\R/\Z)$ is divisible by 2 if and only if its integral Bockstein is divisible by 2.  Since $[a]^2 = 0$, we can write $dT = A^2 + 2Z$ for some integral 2-cocycle $Z$.   The integral Bockstein of $(1/2)tdt - (1/4)A^3$ is then represented by $(1/2)(A^2 + 2Z)^2 - (1/2)A^4 = A^2Z + ZA^2 + 2Z^2 $ which is clearly divisble by 2 in cohomology.  This competes the proof of (i).\\

(ii) $(w,p,a)^4 = (4w - (1/2)A^3, 0, 0) \in G$. Thus, given that $[a]^2\not= 0$, we can lift $[a]$ to an element of order 4 if and only if for some choice of $p \in SH^2(\Z/2)$, the integral Bockstein $\beta[2(2w) - (1/2)A^3] \in H^4(\Z)$ is divisible by 4.  This integral Bockstein is represented by $2\beta(2w) - A^4$.  The mod 2 reduction of $\beta(2w)$ is $[p]^2$. The mod 4 reduction of $2\beta(2w) - A^4$ is then seen to be $2[p]^2 - P([a]^2)$, which completes the proof of (ii).
\end{proof}

Suppose $X$ is a space with finitely generated integral homology in dimensions less than five.  It is a fair question if given enough information about low dimensional cohomology of $X$ with various coefficients, including products, is it possible to explicitly write $G(X)$ as a finite direct sum of $\R/\Z$'s and finite cyclic abelian groups. The answer is yes, but we will not carry out such an analysis, and  we emphasize that such direct  sum decompositions while explicitly computable are not natural.  The results of Propositions \ref{4.4}, \ref{4.5}, and \ref{4.7} above completely characterize all the groups considered in this section, and the extensions, in an invariant functorial manner.

\section{Functoriality Properties of the Groups $G(X)$}\label{sect5}

The construction of the group $G(X)$ is based in the singular cochain complex $S^*(X)$. The Alexander-Whitney diagonal approximation commutes with $f^*\colon S^*(Y)  \to S^*(X)$ induced by maps $f\colon X\to Y$.  All cup and $\text{cup}_i$ products also commute with $f^*$. Therefore, a simple inspection of our product operation on cochain triples and the relation map $\D'$ shows that each stage of our construction is functorial.  So we do get functorial homomorphisms $f^*\colon G(Y) \to G(X)$, for all continuous maps $f\colon X \to Y$.   Moreover, with the compact topologies induced by the compact topologies on cochain groups,  $f^*\colon G(Y) \to G(X)$ is a continuous homomorphism of compact abelian groups. Thus we have proved:

\begin{prop}\label{functorial}
$G$ is a functor from the category of topological spaces and continuous maps to the category of compact abelian groups. Furthermore, the filtration of $G(X)$ given in \S\ref{sect4} is functorial and the identifications of the associated gradeds with subgroups of cohomology given in that section are natural transformations.
\end{prop}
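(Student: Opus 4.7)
The plan is to observe that every ingredient in the construction of $G(X)$ is defined at the cochain level by operations that are natural for continuous maps, so that a continuous $f\colon X\to Y$ induces, coordinate-wise, group homomorphisms $f^*\colon \mathbf{C}(Y)\to\mathbf{C}(X)$ and $f^*\colon \mathbf{C'}(Y)\to\mathbf{C'}(X)$ intertwining $\mathbf{D}$ and $\mathbf{D'}$, whence the induced map $f^*\colon G(Y)\to G(X)$ is a well-defined continuous homomorphism. The only ``inspection'' is to check naturality at the cochain level; once this is in hand, functoriality, filtration preservation, and naturality of the graded identifications are immediate.

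First I would verify cochain-level naturality of each building block. The singular cochain functor $f^*\colon S^*(Y;M)\to S^*(X;M)$ commutes with the differential and with the Alexander--Whitney cup and $\text{cup}_i$ products, since these are all defined by formulas evaluating a cochain on faces or interleavings of the simplex on which one tests. It also commutes with coefficient morphisms such as $(1/2)\colon C^*(\Z/2)\to C^*(\R/\Z)$ and $(1/4)\colon C^*(\Z)\to C^*(\R/\Z)$, as these are applied pointwise. The one slightly delicate point is naturality of the special integral lift $A$ of a $\Z/2$-cocycle $a$: since $A$ is characterized by taking only the values $0$ and $1$ already assigned by $a$ on basis $1$-simplices, and $(f^*A)(\sigma)=A(f\circ\sigma)$ lies in $\{0,1\}$ and reduces mod $2$ to $(f^*a)(\sigma)$, we conclude $f^*A$ is the special lift of $f^*a$.

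Next I would inspect the formulas of Definition~\ref{3.1}, Definition~\ref{defn3.6}, $\mathbf{D}(w,p,a)=dw+(1/2)p^2$, and $\mathbf{D'}(t,x)=((1/2)tdt,dt,dx)$. Every term in every formula is a sum of cup, $\text{cup}_1$, coefficient, and special-lift operations applied to the input cochains and their differentials. By the previous step, $f^*$ applied coordinate-wise therefore respects the products and commutes with $\mathbf{D}$ and $\mathbf{D'}$. Thus $f^*$ carries $\mathrm{Ker}(\mathbf{D}_Y)$ into $\mathrm{Ker}(\mathbf{D}_X)$ and $\mathrm{Im}(\mathbf{D'}_Y)$ into $\mathrm{Im}(\mathbf{D'}_X)$ and descends to a homomorphism $f^*\colon G(Y)\to G(X)$. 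Continuity is inherited from the product topology on cochains, since as explained in \S\ref{sect1.2} all the cochain operations involved are continuous in this topology. Functoriality, $(gf)^*=f^*g^*$ and $\mathrm{Id}^*=\mathrm{Id}$, is inherited from the singular cochain functor.

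Finally, I would check the filtration and graded statements. The subgroups $G^1(X)$ and $G^2(X)$ are defined by triples of the form $(w,p,0)$ and $(w,0,0)$, and since $f^*$ acts coordinate-wise, it preserves both types of triples; hence $f^*(G^i(Y))\subset G^i(X)$. The identifications of Propositions~\ref{4.1}--\ref{4.3} send a triple to the cohomology class of the distinguished surviving coordinate (respectively $[w]\in H^3(\R/\Z)$, $[p]\in SH^2(\Z/2)$, $[a]\in H^1(\Z/2)$), and $f^*$ on cochains is a chain map, so it induces the standard $f^*$ on these cohomology groups. This exhibits the identifications as natural transformations. The main potential obstacle, the naturality of the special lift $A$ and of the various coefficient homomorphisms, is essentially automatic, so no substantive difficulty arises beyond the bookkeeping above.
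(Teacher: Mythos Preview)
Your proposal is correct and takes essentially the same approach as the paper: the paper's proof is the short paragraph immediately preceding the proposition, which simply observes that the Alexander--Whitney diagonal and its higher homotopies commute with $f^*$, so inspection of the product and of $\mathbf{D'}$ yields functoriality. Your version is a more detailed unpacking of that same inspection; in particular, your explicit check that the special integral lift is natural (since $f^*A$ again takes only values $0$ and $1$ on basis simplices and reduces to $f^*a$) is a point the paper passes over silently but which is indeed needed for the $(1/4)AB^2$ term in the product formula.
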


Although $S^*(X)$ is easy to define, a certain subcomplex of $S^*(X)$ is more natural for many purposes.  Specifically, there is the $reduced$ $singular$ $complex$ $S^*_r(X) \subset S^*(X)$, which consists of singular cochains that vanish on degenerate singular $n$-simplices $\Delta^n \to X$. (A degenerate singular simplex is one that factors through a linear projection $\Delta^n \to \Delta^k$ induced by a  non-decreasing surjective function $(0,1, ..., n) \to (0,1, ... k)$, with $k < n$.) The inclusion $S_r^*(X) \subset S^*(X)$ induces a functorial isomorphism of all cohomology groups because the subcomplex of the singular chain complex generated by degenerate singular simplices is acyclic. The inclusion $S_r^*(X) \subset S^*(X)$  also commutes with $\cup_i$ products.  Thus we get another functorial construction of a group $G_r(X)$ and a natural  homomorphism $G_r(X) \to G(X)$. We will see below that this homomorphism is an isomorphism.  \\

But one of our important goals was to produce a finitely computable theory.  So we also want to use some smaller cochain complexes defined for simplicial complexes $X$.  The Alexander-Whitney diagonal requires a partial ordering of vertices, with the property that the vertices of each simplex are totally ordered.  Then $n$-simplices are identified with the standard simplex $\Delta^n = (0,1,2,\cdots, n)$.  We  are calling simplicial complexes with this extra structure ordered simplicial complexes. Allowed maps $X \to Y$ are ordered simplicial maps, those such that on each simplex of $X$ the map on the vertices is non-decreasing. The first barycentric subdivision $X'$ of any simplicial complex always has this structure.  Vertices of $X'$ are barycenters of simplices of $X$, so they can be partially ordered by the face relation in $X$, or simply by labeling vertices of $X'$ by the dimension of simplices in $X$. The first barycentric subdivision $X' \to Y'$ of any simplicial map $X \to Y$ is an ordered simplicial map.\\

It is useful to have some notation for an ordered simplicial  structure on a space $X$.  We will write $[X]$ to mean an ordered simplicial complex.  Then $[X] \mapsto X$ becomes a forgetful functor.
As in the singular case, there are two chain complexes associated to an ordered simplicial complex $[X]$.  There is a large complex $C_*([X])$ with basis of $C_n$ given by all non-decreasing sequences of vertices $v_0 \leq v_1 \leq \cdots \leq v_n$ that span a simplex of $[X]$ of some dimension $k \leq n$. There is also the more standard complex $C_*^r([X])$ with basis of $C_n^r$ given by just the non-degenerate $n$-simplices.   There is an obvious surjection of chain complexes $C_*([X]) \to C_*^r([X])$ with kernel the subcomplex generated by degenerate simplices.  The degenerate complex is acyclic.  Passing to duals, there is an injection $C_r^*([X]) \subset C^*([X])$ with image consisting of cochains on the full complex that vanish on degenerate simplices.  Acyclicity of the degenerate complex implies $C_r^*([X]) \subset C^*([X])$ induces functorial isomorphisms of all cohomology groups. Note this situation is closely parallel to the singular complexes $S_r^*(X) \subset  S^*(X)$. 
$\text{Cup}_i$ products are defined in both combinatorial complexes and the maps preserve products.
All the constructions of our group $G(X)$ can be carried out in both complexes $C_r^*([X])$ and  $C^*([X])$. If we want absolute invariants on the simplicial category we can work with the first barycentric subdivision $X'$, with its natural partial vertex ordering.\\

If $[X]$ is an ordered simplicial complex there is a simple but important relation between the singular complexes and the combinatorial complexes.  Namely, a combinatorial simplex of $[X]$, viewed as a map $\Delta^n \to X$, $is$ an element of the singular complex of space $X$.  So there is a  tautologous inclusion of chain complexes $C_*([X]) \subset S_*(X)$.  This inclusion induces surjections of cochain complexes $S^*(X) \to C^*([X])$ and $S_r^*(X) \to C_r^*([X])$, which just amount to restricting singular cochains to the combinatorial simplices of $[X]$. By a slight abuse of notation, we will agree that if $c \in S^*(X)$ is a singular cochain, which is thus a function defined on all singular simplices of $X$, and if $[X]$ is an ordered simplicial structure on space $X$, then we will use the same symbol $c \in C^*([X])$ to designate the tautologous restriction of $c$ to combinatorial simplices of $[X]$\\

If $[X]$ is an ordered simplicial complex there is now a natural diagram of cochain complexes and continuous homorphisms

\begin{align}\label{diag1}
\begin{split}
S_r^*(X) \hspace{.3in}&\hookrightarrow\ \ \    S^*(X)   \\ 
\downarrow \hspace{.5in}& \hspace{.5in}  \downarrow \\
 C_r^*([X]) \hspace{.3in}& \hookrightarrow\ \ \   C^*([X]). 
\end{split}
\end{align}

The horizontal and vertical arrows have already been discussed.  The diagram commutes. All the arrows induce isomorphisms of cohomology groups and rings.  For the horizontal arrows, this follows from the acyclicity of the complexes spanned by degenerate simplices.  For the vertical arrows, this follows from the well known equivalence of simplicial and singular homology theories.  $\text{Cup}_i$ products are defined in all four complexes and commute with the maps.  The diagram is natural, that is, it commutes with the maps induced by ordered simplicial maps. We deduce from this diagram of cochain complexes a natural commutative diagram of compact groups and continuous homomorphisms, for any ordered simplicial complex $[X]$

\begin{align}\label{diag2}
\begin{split}
G_r(X) \hspace{.3in}&\rightarrow\ \ \    G(X)   \\ 
\downarrow \hspace{.5in}& \hspace{.5in}  \downarrow \\
G_r([X]) \hspace{.3in}& \rightarrow\ \ \   G([X]). 
\end{split}
\end{align}

Moreover, all the filtrations and isomorphisms in Section~\ref{sect4} become natural functorial parts of these four group structures. Although there may be quite a few things to check, this is all rather easy.\\

Let us reap some consequences.

\begin{prop}\label{5.1} For all spaces $X$, the top horizontal homomorphism in Diagram~\ref{diag2} is an isomorphism. For ordered simplicial complexes $[X]$, all four homomorphisms in Diagram~\ref{diag2} are isomorphisms of filtered topological groups.  Thus all four groups are identified with the singular complex version $G(X)$.  The entire diagram is functorial on suitable categories of spaces, complexes, and maps. \end{prop}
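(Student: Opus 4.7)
The plan is to deduce each isomorphism from the fact that all four cochain complexes in Diagram~\ref{diag1} compute the same cohomology and that every arrow in that diagram commutes with differentials, cup products, and $\cup_1$ products. First I would apply the filtration analysis of Section~\ref{sect4} uniformly to each of the four variants of our construction. Propositions~\ref{4.1}, \ref{4.2}, and~\ref{4.3} use only formal algebraic features (differential, cup products, $\cup_1$, Bockstein, and the special integral lift of $\Z/2$ cocycles) that are present in each of the four complexes and preserved by each arrow of Diagram~\ref{diag1}. Consequently every variant of $G$ carries a filtration $G \supset G^1 \supset G^2 \supset 0$, and each map in Diagram~\ref{diag2} is filtration preserving.

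Next I would identify the induced maps on associated gradeds. In every cochain theory these are $H^3(\R/\Z)$, $SH^2(\Z/2)$, and $H^1(\Z/2)$. Because each arrow of Diagram~\ref{diag1} induces an isomorphism on cohomology and preserves cup products, it also preserves the subgroup $SH^2(\Z/2) \subset H^2(\Z/2)$ cut out by the condition $(1/2)[p]^2 = 0 \in H^4(\R/\Z)$. Hence every induced map between associated gradeds of the four variants of $G$ is an isomorphism.

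To finish, I would propagate from gradeds to the full groups by two applications of the five-lemma, first to the short exact sequence $0 \to G^2 \to G^1 \to G^1/G^2 \to 0$ and then to $0 \to G^1 \to G \to G/G^1 \to 0$. Continuity of each map in Diagram~\ref{diag2} is inherited from the cochain level, and a continuous bijection between compact Hausdorff groups is automatically a homeomorphism, giving topological isomorphisms in every case. The top horizontal statement, valid for arbitrary spaces $X$, follows from the very same argument applied to the inclusion $S_r^*(X) \hookrightarrow S^*(X)$; the remaining three assertions require the ordered simplicial structure to give meaning to the two bottom entries of Diagram~\ref{diag2}.

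The main obstacle is not in any single calculation but in confirming that the entire discussion of Section~\ref{sect4} is genuinely formal and applies verbatim in each of the four cochain contexts. In particular one must check that the special integral lift of a $\Z/2$ cocycle, the identity $dA = 2A^2$, and the Bockstein identifications underlying Propositions~\ref{4.4} and~\ref{4.5} make sense in each complex and commute with each arrow of Diagram~\ref{diag1}. This compatibility is built into the cochain-level definitions, so once it is verified the remainder is a direct diagram chase.
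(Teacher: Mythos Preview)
Your proposal is correct and follows essentially the same approach as the paper: use that the maps in Diagram~\ref{diag2} are filtration preserving, identify the associated gradeds with cohomology groups via Section~\ref{sect4}, observe that the arrows of Diagram~\ref{diag1} induce cohomology isomorphisms, and then apply the five-lemma twice. Your additional remarks on continuity and on verifying that the Section~\ref{sect4} arguments are genuinely formal in all four cochain contexts are reasonable elaborations, but the core strategy matches the paper's proof.
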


\begin{proof} Just use the  fact that our filterings $G(X) \supset G^1(X) \supset G^2(X) \supset 0$ constructed from any suitable cochain complex are natural, that is, suitable maps between cochain complexes induce filtration preserving maps of the corresponding $G$ groups.  The successive quotients in the filterings are various cohomology groups, identified in Section~\ref{sect4}. The cohomology groups, in all dimensions with arbitrary coefficients,  of the four complexes in Diagram~\ref{diag1} are naturally isomorphic.  Filtration preserving maps that induce isomorphisms on successive quotients are themselves isomorphisms, by repeated application of the 5-Lemma.\end{proof}

We point out that the cochain complex $C_r^*([X])$ is by far the smallest of the cochain complexes here, hence from a direct computational point of view for finite complexes the group $G_r([X])$ is the most palatable. The bases of various cochain groups used in the construction of $G_r([X])$ correspond to simplices of $[X]$ of the appropriate dimension.\\

Another simple filtration argument proves the following.

\begin{prop}\label{5.2}  Suppose $f: X \to Y$ is a map inducing homology isomorphisms in dimensions less than or equal to  three and a surjection on $H_4(\Zee)$.  Then $f^*: G(Y)\to G(X)$ is an isomorphism.\end{prop}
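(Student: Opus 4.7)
The plan is to exploit the natural filtration $G(-)\supset G^1(-)\supset G^2(-)\supset 0$ from Section~\ref{sect4}, which is preserved by $f^*$ by Proposition~\ref{functorial}, and whose successive quotients are naturally identified with the cohomology groups $H^3(-;\Ar/\Zee)$, $SH^2(-;\Zee/2)$, and $H^1(-;\Zee/2)$. Since $G(X)$ is abelian (Corollary~\ref{3.11} together with the fact that $\text{Im}({\bf D'})$ contains the commutator subgroup of $\text{Ker}({\bf D})$), a standard induction applying the $5$-lemma to the two short exact sequences of abelian groups
$$0\to G^2\to G^1\to G^1/G^2\to 0\qquad\text{and}\qquad 0\to G^1\to G\to G/G^1\to 0$$
reduces the claim to showing that $f^*$ induces isomorphisms on each of the three cohomological graded pieces.

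Two of these are handled immediately by the universal coefficient theorem. Because $f_*\colon H_i(X;\Zee)\to H_i(Y;\Zee)$ is an isomorphism for $i\le 3$, the naturality of the UCT short exact sequence together with a $5$-lemma comparison yields that $f^*\colon H^i(Y;A)\to H^i(X;A)$ is an isomorphism for all $i\le 3$ and any coefficient group $A$. Specializing to $(i,A)=(3,\Ar/\Zee)$ handles $G^2$, and specializing to $(i,A)=(1,\Zee/2)$ handles $G/G^1$.

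The only step with real content is the $SH^2$-piece, and it is here that the $H_4$-surjection hypothesis is needed. Injectivity of $f^*$ on $SH^2(-;\Zee/2)$ is automatic from the iso $f^*\colon H^2(Y;\Zee/2)\to H^2(X;\Zee/2)$. For surjectivity, I would take $[p]\in SH^2(X;\Zee/2)$, lift it uniquely to $[\tilde p]\in H^2(Y;\Zee/2)$, and verify that $(1/2)[\tilde p]^2=0\in H^4(Y;\Ar/\Zee)$. Naturality of cup products gives $f^*\bigl((1/2)[\tilde p]^2\bigr)=(1/2)[p]^2=0$, so it suffices to show that $f^*\colon H^4(Y;\Ar/\Zee)\to H^4(X;\Ar/\Zee)$ is injective. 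Since $\Ar/\Zee$ is divisible the Ext term in the UCT vanishes, so $H^4(-;\Ar/\Zee)=\Hom(H_4(-;\Zee),\Ar/\Zee)$, and the hypothesized surjection $f_*\colon H_4(X;\Zee)\to H_4(Y;\Zee)$ dualizes to exactly the required injection.

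With all three graded pieces verified, the $5$-lemma induction closes the argument. The main (and in fact the only) substantive point is the surjectivity of $f^*$ on $SH^2$, which is precisely where the $H_4$-surjection hypothesis is consumed; everything else is routine filtration bookkeeping and UCT, so I do not anticipate any serious obstacle.
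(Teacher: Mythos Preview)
Your proof is correct and follows precisely the filtration argument the paper has in mind; the paper merely gestures at the proof with ``Another simple filtration argument proves the following,'' while you have correctly filled in the details, including the key observation that the $H_4$-surjection hypothesis is consumed exactly in showing $f^*\colon H^4(Y;\Ar/\Zee)\to H^4(X;\Ar/\Zee)$ is injective, hence $f^*$ is surjective on $SH^2$.
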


Thus,  for complexes  $X$, up to isomorphism $G(X)$ only depends on the homotopy type of the four skeleton of $X$. But we still want to know that $G(X)$ is a homotopy functor.

\begin{prop}\label{5.3}  Suppose $F = f_t\colon X\times I \to Y$ is a homotopy between maps
$$f_0, f_1\colon X \to Y.$$ Then $f_0^* = f_1^*\colon G(Y) \to G(X)$.\end{prop}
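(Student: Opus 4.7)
The plan is to reduce Proposition~\ref{5.3} to Proposition~\ref{5.2} applied to the projection $\pi\colon X\times I\to X$. Let $j_0,j_1\colon X\to X\times I$ denote the two end inclusions. Since $F\circ j_t=f_t$ for $t\in\{0,1\}$, functoriality of $G$ (Proposition~\ref{functorial}) gives $f_t^*=j_t^*\circ F^*$, so it suffices to prove the equality $j_0^*=j_1^*\colon G(X\times I)\to G(X)$.

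Next I would exploit the identities $\pi\circ j_0=\pi\circ j_1=\mathrm{id}_X$, which by functoriality yield $j_0^*\circ\pi^*=j_1^*\circ\pi^*=\mathrm{id}_{G(X)}$. The projection $\pi$ is a homotopy equivalence (with inverse $j_0$, via the straight-line contraction of $X\times I$ onto its bottom face), so the classical prism chain homotopy for singular homology gives an isomorphism $\pi_*\colon H_n(X\times I;\Zee)\to H_n(X;\Zee)$ in every degree. In particular $\pi$ satisfies the hypotheses of Proposition~\ref{5.2}, so $\pi^*\colon G(X)\to G(X\times I)$ is an isomorphism of compact groups. Cancelling $\pi^*$ in the two identities then produces $j_0^*=j_1^*$, and composing with $F^*$ gives $f_0^*=f_1^*$.

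The conceptual obstacle I wanted to avoid is a direct cochain-level argument. On singular cochains one does have a prism operator $h$ with $dh+hd=j_1^*-j_0^*$, but $h$ fails to commute with the cup and $\cup_1$ products, and it is not at all straightforward to manufacture, out of a triple $(w,p,a)$ on $X\times I$ with $dw+\tfrac12 p^2=0$, an explicit element of $\mathrm{Im}(\mathbf{D'})$ on $X$ producing the difference $j_1^*(w,p,a)-j_0^*(w,p,a)$ (in the non-abelian sense). The route above bypasses this entirely by extracting homotopy invariance formally from Proposition~\ref{5.2}. One should note that Proposition~\ref{5.2} is logically independent of the present statement: its proof uses only naturality of the filtration of $G$ built in \S\ref{sect4} together with the 5-lemma on the associated graded cohomology groups $H^1(\Zee/2)$, $SH^2(\Zee/2)$, $H^3(\Ar/\Zee)$, which transform correctly under $\pi$ by the universal coefficient theorem applied to the (already established) homology isomorphisms.
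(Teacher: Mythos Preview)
Your argument is correct and follows essentially the same route as the paper: reduce to showing $j_0^*=j_1^*$ by factoring $f_t=F\circ j_t$, invoke Proposition~\ref{5.2} on the projection $\pi\colon X\times I\to X$ to see that $\pi^*$ is an isomorphism, and then cancel $\pi^*$ in the identities $j_t^*\circ\pi^*=\mathrm{id}_{G(X)}$. The paper phrases it slightly differently (and remarks that this is the standard trick for any functor taking homotopy equivalences to isomorphisms), but the logic is identical.
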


\begin{proof} The slices $i_t\colon X = X\times \{t\} \to X\times I$, along with the projection $p\colon X \times I \to X$, all induce isomorphisms of $G$'s, by Proposition~\ref{5.2}.  The compositions $$i_t^* p^*\colon G(X) \to G(X\times I) \to G(X)$$ are all identity maps. Because $p^*$ is an isomorphism, the maps $i_t^*$ must coincide, $0\leq t \leq 1$.   By looking at the compositions $F \circ i_t\colon X \to X\times I \to Y$
the proposition now follows easily. (This is a well-known argument for any functor $H(X)$ that converts homotopy equivalences to isomorphisms.)
\end{proof}

\begin{cor}\label{homfunct}
$G$ is a functor from the homotopy category to the category of compact abelian groups.
\end{cor}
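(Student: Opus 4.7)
The plan is to observe that this corollary is a purely formal consequence of two results already established in the excerpt, namely Proposition~\ref{functorial} and Proposition~\ref{5.3}. No new computations are needed.

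First, by Proposition~\ref{functorial}, we already have a functor $G$ from the category of topological spaces and continuous maps to the category of compact abelian groups, assigning to each continuous $f\colon X\to Y$ a continuous homomorphism $f^*\colon G(Y)\to G(X)$, with $(gf)^* = f^*g^*$ and $\mathrm{id}^* = \mathrm{id}$. Second, by Proposition~\ref{5.3}, if $f_0,f_1\colon X\to Y$ are homotopic then $f_0^* = f_1^*$. Together these say precisely that $f\mapsto f^*$ factors through the quotient functor from the category of spaces and continuous maps to the homotopy category.

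Concretely, I would define the functor (still called $G$) on the homotopy category by setting $G([f]) = f^*$ for any representative $f$ of the homotopy class $[f]$. Proposition~\ref{5.3} guarantees this is well defined. Identities and compositions are preserved because $G$ already preserves them at the level of continuous maps: if $[f] = [f']$ and $[g] = [g']$ then $gf \simeq g'f'$ (compose the homotopies), and $(gf)^* = f^*g^*$ by Proposition~\ref{functorial}, so $G([g][f]) = G([gf]) = (gf)^* = f^*g^* = G([f])G([g])$, respectively $G([\mathrm{id}_X]) = \mathrm{id}_{G(X)}$. The target category is still compact abelian groups (with continuous homomorphisms), since the representing maps $f^*$ are already continuous by Proposition~\ref{functorial}.

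The only substantive content is Proposition~\ref{5.3}, already proved, and there is no genuine obstacle here; the argument is the standard formal passage from a functor that sends homotopies to equalities to a functor on the homotopy category.
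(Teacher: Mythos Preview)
Your proposal is correct and matches the paper's approach: the paper states this corollary immediately after Proposition~\ref{5.3} with no explicit proof, treating it as the evident formal consequence of Proposition~\ref{functorial} and Proposition~\ref{5.3} that you have spelled out.
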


\begin{cor}\label{cor5.4} Suppose $[X]$ and $[Y]$ are two ordered simplicial complexes and
$$f_0, f_1 \colon [X] \to [Y]$$
 are two ordered simplicial maps that are topologically homotopic. Then $$f_0^* = f_1^*\colon G_r([Y]) \to G_r([X]).$$\end{cor}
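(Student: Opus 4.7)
The plan is to reduce the statement to Proposition~\ref{5.3} via the naturality established in Proposition~\ref{5.1}. The key observation is that Proposition~\ref{5.1} gives, for each ordered simplicial complex $[Z]$, a natural isomorphism $G_r([Z]) \to G(Z)$ (where $Z$ denotes the underlying topological space), and moreover this isomorphism is functorial with respect to ordered simplicial maps.

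First, I would set up the naturality square. For each of the two ordered simplicial maps $f_i \colon [X] \to [Y]$, we may also view $f_i \colon X \to Y$ as a continuous map between the underlying topological spaces. The commutative diagram in Proposition~\ref{5.1} (extended to the map $f_i$) gives a commutative square whose vertical arrows are the isomorphisms $G_r([Y]) \xrightarrow{\cong} G(Y)$ and $G_r([X]) \xrightarrow{\cong} G(X)$, with $f_i^*$ on $G_r$-groups on top and $f_i^*$ on $G$-groups on bottom. Thus each simplicial $f_i^* \colon G_r([Y]) \to G_r([X])$ is intertwined with the continuous $f_i^* \colon G(Y) \to G(X)$.

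Second, I would invoke Proposition~\ref{5.3}. By hypothesis $f_0$ and $f_1$ are topologically homotopic as continuous maps $X \to Y$, so Proposition~\ref{5.3} directly yields $f_0^* = f_1^* \colon G(Y) \to G(X)$. Combining this with the naturality square from the previous step, and using that the vertical isomorphisms are the same in both squares (they depend only on $[X]$ and $[Y]$, not on the map), we conclude $f_0^* = f_1^* \colon G_r([Y]) \to G_r([X])$.

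There is no real obstacle here; the work was done in Propositions~\ref{5.1} and~\ref{5.3}. The only point to be careful about is verifying that an ordered simplicial map induces the ``same'' map on $G$ whether one runs it through the combinatorial cochain complex $C_r^*$ or through the singular complex $S^*$, but this is precisely the content of the naturality of Diagram~\ref{diag2} with respect to ordered simplicial maps, which is already stated in Proposition~\ref{5.1}.
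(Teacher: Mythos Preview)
Your proof is correct and follows essentially the same approach as the paper, which simply says to use Propositions~\ref{5.1} and~\ref{5.3} together with the functoriality of the isomorphisms in Diagram~\ref{diag2}. You have just spelled out the naturality-square argument in more detail than the paper does.
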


\begin{proof} Use Propositions~\ref{5.1} and~\ref{5.3} and the fact that the various group isomorphisms are functorial in $X$.
\end{proof}

We have seen that for an ordered simplicial complex $[X]$, the maps in Diagram~\ref{diag2} induce a canonical identification $G(X) = G_r([X])$. A variant of the corollary allows us to see that with respect to these identifications, any order preserving simplicial map $[X_1] \to [X_0]$ between triangulations of $X$ that is homotopic to the identity, induces the identity map $G_r([X_0]) \to G_r([X_1])$.  For example, this includes all order preserving maps between different triangulations of $X$ that are simplicial approximations of the identity.  Of course the cocycle and cochain arithmetic underlying  $G_r([X])$ for different combinatorial structures is very different, just as in the classical cohomology theory of complexes.  But they all yield the same group $G$ up to canonical isomorphisms.\\

In the family of  order preserving simplicial maps homotopic to the identity are the following interesting examples.   With $[X]$ an ordered complex, consider the first barycentric subdivision $[X']$ of $[X]$, with its natural vertex partial order.  Then there is a unique order preserving map $f\colon [X'] \to [X]$ such that each vertex  $v \in [X']$ maps to a vertex of the simplex $\sigma(v) $ of $[X]$ corresponding to $v$.  Namely, $f(v)$ must be the greatest vertex of $\sigma(v)$. It follows that the very explicit map $f^*\colon G_r([X]) \to G_r([X'])$ is the canonical identification obtained by comparing both to the singular $G(X)$. Also, the canonical identification of the groups defined by two different ordered structures on the same simplicial complex, is explicitly given by comparing both to the barycentric subdivision.\\

Suppose given two ordered simplicial structures $[X_0]$ and $[X_1]$ on the same space $X$.  It makes sense to ask when do two elements $(w_j,p_j,a_j) \in G_r([X_j])$ coincide under the canonical identification of the two groups?  The above paragraph gives an answer in terms of barycentric subdivisions and simplicial approximations of the identity.  But if the two simplicial structures have a common subdivision there is another answer.  Given two triangulations with isomorphic subdivisions, there exist triangulations of $I \times X$ extending the given triangulation on $\partial I\times X$.

\begin{prop}\label{5.5}  Fix two cochain representatives of elements $(w_j,p_j,a_j) \in G_r([X_j])$, $j = 0, 1$. Then these elements coincide under the canonical identifications of the groups $G_r([X_j])$ if and only if for every ordered simplicial structure $[I \times X]$ that extends the two structures $[X_0]$ and $[X_1]$ on $0 \times X$ and $1\times X$, respectively, there exists an element $(w,p,a) \in G_r([I \times X])$ so that at the level of representative cochains $(w,p,a)\vert_{[X_j]} = (w_j, p_j, a_j)$, $j = 0,1$. \end{prop}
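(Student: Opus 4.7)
The plan is to prove sufficiency from the homotopy invariance of $G$ (Proposition~\ref{5.3}) and necessity by first producing a preliminary extension in the appropriate $G$-class and then correcting its cochain representative using $\D'$ and a coboundary.

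For sufficiency, suppose $(w,p,a) \in G_r([I \times X])$ restricts cochain-wise to $(w_j, p_j, a_j)$ on $[X_j]$ for $j=0,1$. The continuous inclusions $i_0, i_1 \colon X \to I \times X$ of the two ends are homotopic, so Proposition~\ref{5.3} gives $i_0^* = i_1^* \colon G(I \times X) \to G(X)$. Under the canonical isomorphisms of Proposition~\ref{5.1}, these pullbacks correspond to the subcomplex restrictions $G_r([I \times X]) \to G_r([X_j])$ induced by the ordered simplicial inclusions $[X_j] \hookrightarrow [I \times X]$. Applying this common map to the class of $(w,p,a)$ shows $[(w_0, p_0, a_0)]$ and $[(w_1, p_1, a_1)]$ represent the same element of $G(X)$ under the canonical identifications of the $G_r([X_j])$.

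For necessity, fix $[I \times X]$. The inclusion $i_0 \colon X \to I \times X$ is a homotopy equivalence, so $i_0^* \colon G_r([I \times X]) \to G_r([X_0])$ is an isomorphism. Choose a class in $G_r([I \times X])$ mapping to $[(w_0, p_0, a_0)]$ and let $(W, P, A) \in {\rm Ker}(\D)$ be a cochain representative. By sufficiency applied to $(W, P, A)$, its restrictions to $[X_0]$ and $[X_1]$ coincide under canonical identifications, hence both equal $[(w_1, p_1, a_1)]$ by hypothesis. So for each $j$, the triples $(W, P, A)\vert_{[X_j]}$ and $(w_j, p_j, a_j)$ lie in ${\rm Ker}(\D)$ and represent the same element of $G_r([X_j]) = {\rm Ker}(\D)/{\rm Im}(\D')$. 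Combined with the quotient $\bar C^3 = C^3/B^3$, this gives cochains $t_j \in C^1([X_j]; \Zee/2)$, $x_j \in C^0([X_j]; \Zee/2)$, and $f_j \in C^2([X_j]; \R/\Z)$ such that, as actual cochains,
$$(w_j, p_j, a_j) \;=\; \bigl((W, P, A)\vert_{[X_j]}\bigr) \centerdot \D'(t_j, x_j) \;+\; (df_j, 0, 0),$$
where $\centerdot$ is the product of Definition~\ref{3.1} and $+(df_j, 0, 0)$ denotes addition of the coboundary $df_j$ in the first coordinate.

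The subcomplexes $[X_0]$ and $[X_1]$ of $[I \times X]$ are disjoint, so we may simultaneously extend the end corrections: choose $T \in C^1([I \times X]; \Zee/2)$, $Y \in C^0([I \times X]; \Zee/2)$, and $F \in C^2([I \times X]; \R/\Z)$ whose restrictions to $[X_j]$ equal $t_j$, $x_j$, $f_j$ respectively (extending by zero on simplices outside both ends, say). Set
$$(w, p, a) \;:=\; (W, P, A) \centerdot \D'(T, Y) \;+\; (dF, 0, 0) \;\in\; \C([I \times X]).$$
This triple lies in ${\rm Ker}(\D)$: $(W, P, A)$ does, $\D'(T, Y) \in {\rm Im}(\D') \subseteq {\rm Ker}(\D)$ by Claim~\ref{DD'claim}, and adding a first-coordinate coboundary preserves ${\rm Ker}(\D)$. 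Since restriction to a subcomplex commutes with every cochain operation appearing here---the differential, cup and $\cup_1$ products, the coefficient morphisms, and the special integral lift---the restriction of $(w, p, a)$ to $[X_j]$ equals the right-hand side of the previous displayed identity, namely $(w_j, p_j, a_j)$.

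The main obstacle is extracting the explicit cochain identity in the second paragraph from the abstract equality of classes in $G_r([X_j])$. Because $\C$ is non-abelian, one must choose a side on which to multiply by $\D'(t_j, x_j)$; Proposition~\ref{3.10} removes the ambiguity by showing ${\rm Im}(\D')$ is normal in $\C$ and contains the commutator subgroup, so either side works. The remaining ingredients---the lift via $i_0^*$, the simultaneous extension of cochains across the two disjoint ends, and the verification that the patched triple restricts correctly---are routine once that cochain identity is in hand.
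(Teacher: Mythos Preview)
Your proof is correct and follows essentially the same strategy as the paper: obtain a preliminary element of $G_r([I\times X])$ in the right class, then correct its cochain representative by multiplying by $\D'$ of cochains extended from the two ends. The only differences are minor: you produce the initial lift via the inverse of the end inclusion $i_0^*$ rather than the paper's projection $I\times X\to X$, and you handle the first-coordinate coboundary correction $(df_j,0,0)$ explicitly by extending $f_j$ to $F$, whereas the paper absorbs this into the $\overline{C^3}$ quotient.
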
 
\begin{proof} The `if' direction is trivial, since the homotopy equivalences between $[X_0], [X_1]$ and $[I \times X]$ provide the canonical identifications of $G$-groups.  For the converse, the projection $I \times X \to X$ along with comparison of the singular theory to the simplicial theory, provides an element $(w', p', a') \in  G_r([I \times X])$ that restricts to $(w_j, p_j, a_j) \in G_r([X_j])$,  $j = 0, 1$.  We don't yet have equality at the cochain level. But if at the cochain level $(w', p', a')\vert_{[X_j]} = (w_j, p_j, a_j)((1/2)t_jdt_j, dt_j, dx_j)$, choose reduced simplicial cochains $t, x$ on $ [I \times X] $ that extend $t_j, x_j$, $j = 0,1$. Clearly $(w,p,a) = (w', p', a')((1/2)tdt, dt, dx)$ restricts exactly to $(w_j, p_j, a_j)$ on the ends $[X_j]$. \end{proof}

\section{The Pairing $G(X)\times \Omega_3^{\rm Spin}(X)\to \Ar/\Zee$}\label{sectstatement}
 The goal  of the next three sections is to produce an explicit continuous bilinear pairing
 $$G(X)\times \Omega_3^{\rm Spin}(X)\to \Ar/\Zee,$$ one whose adjoint determines a
 functorial identification
 $$\rho\colon G(X)\to \DX$$ that establishes Theorem~\ref{1.2}.
 In this section we define a continuous bilinear pairing
  $$G^1(X)\times \Omega^{spin}_3(X) \to \Ar/\Zee.$$
  whose adjoint is a continuous homomorphism
  $$\rho^1\colon G^1(X)\to \DX.$$
In the next section we define a continuous pairing
$$ H^1(X;\Zee/2)\times \Omega^{spin}_3(X) \to \Ar/\Zee$$
which is linear in the second variable (but not in the first), whose adjoint is a continuous function
$$\lambda^1\colon H^1(X;\Zee/2)\to   {\rm Hom}(\Omega^{spin}_3(X),\Ar/\Zee).$$
In Section~\ref{sectproof} we show that $\rho^1$ and $\lambda^1$ satisfy the compatibility condition given  in Proposition~\ref{4.8}.
It then follows from Proposition~\ref{4.8} that
$$\rho\colon G(X)\to {\rm Hom}(\Omega^{spin}_3(X),\Ar/\Zee)$$
defined by 
\begin{equation}\label{rhoeqn}
\rho(w,p,a)=\rho^1(w,p,0)\lambda^1(a)
\end{equation}
is a continuous homomorphism.
Invoking the filtrations from \S~\ref{sect2} we show that
$\rho$ is an isomorphism of (compact) topological abelian groups.

Here is the result that we shall establish.

\begin{thm}
The pairing
$$G(X)\times \Omega_3^{spin}(X)\to \Ar/\Zee$$ whose adjoint is $\rho$ is given as follows.
Let $(w,p,a)$ represent an element of $G(X)$ and let $f\colon M\to X$ represent an element of 
$\Omega_3^{spin}(X)$. There is an ordered simplicial map
$\varphi\colon M\to S^2$, a reduced singular cocycle $u$ on $S^2$, and a $1$-cochain $t$ on $M$ such that
$f^*p+dt=\varphi^*u$. Then,
\begin{align*}
\langle (w,p,a),(M,f)\rangle =
{\rm Arf}(M,f^*a)+\frac{1}{2} [Spin(Z)]+\int_M\Bigl[f^*w+(1/2)\bigl((f^*p\cup_1dt)+tdt\bigr)\Bigr]
\end{align*}
where ${\rm Arf}(M,\alpha)\in (1/8)\Zee/\Zee$ is Arf invariant of the $Pin^-$ bordism class of a surface in $M$ Poincar\'e dual to the cohomology class of $\alpha$ and where $Z$ is the preimage under $\varphi$ of the barycenter of a $2$-simplex in $S^2$.
\end{thm}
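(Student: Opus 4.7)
The plan is to identify the right-hand side with $\rho^1(w,p,0)(M,f)+\lambda^1(a)(M,f)$ so that it matches the homomorphism $\rho$ built in equation~(\ref{rhoeqn}). By construction $\lambda^1(a)(M,f)=\mathrm{Arf}(M,f^*a)$, and the remaining terms comprise $\rho^1(w,p,0)(M,f)$; the work is then to construct $\rho^1$ and $\lambda^1$ by these formulas, verify well-definedness and Spin bordism invariance of each, and establish the compatibility hypothesis of Proposition~\ref{4.8}.

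First, I would justify existence of the data $(\varphi,u,t)$. A closed Spin $3$-manifold $M$ has $H^3(M;\Z)=\Z$, so the Bockstein $H^2(M;\Z/2)\to H^3(M;\Z)$ vanishes and $[f^*p]$ admits an integral lift; Pontrjagin--Thom realizes this lift as $\varphi^{*}[\omega]$ for some map $\varphi\colon M\to S^2$ (which can be taken ordered simplicial after subdivision), with $\omega\in H^2(S^2;\Z)$ the generator. Taking a reduced cocycle $u\in Z^2_r(S^2;\Z/2)$ representing the mod-$2$ generator, we find $t\in C^1(M;\Z/2)$ with $\varphi^{*}u=f^*p+dt$. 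Independence of the formula from the choices of $(\varphi,u,t)$ and from the cochain representatives of $(w,p,a)$ and $[M]$ is verified by Lemma~\ref{key}: any discrepancy is a natural $\R/\Z$-valued cocycle operation whose input cohomology classes agree and which, by Remark~\ref{Coops}, must vanish in cohomology; on closed $M$ the integral then vanishes.

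Spin bordism invariance of each piece is obtained by extending all the data across a Spin null-bordism $(W,F)$ of $(M,f)$. Using the defining relation $dw+(1/2)p^2=0$ together with $\varphi^{*}u=p+dt$, the cochain integrand extends to a cochain on $W$ whose differential equals $(1/2)\varphi^{*}(u^2)$; since $u^2$ is exact on $S^2$, this differential is itself exact and Stokes forces the cochain piece to vanish. The preimage $Z=\varphi^{-1}(\text{regular value})$ bounds a framed surface in $W$, so $[\mathrm{Spin}(Z)]=0\in\Omega^{spin}_1=\Z/2$. The Poincar\'e dual of $f^*a$ extends to a $\mathrm{Pin}^{-}$ null-bordism, killing the Arf summand. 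Hence $\rho^1$ and $\lambda^1$ descend to Spin bordism classes.

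The principal obstacle is the compatibility condition required by Proposition~\ref{4.8}:
$$\lambda^1(\alpha)+\lambda^1(\beta)-\lambda^1(\alpha+\beta)\;=\;\rho^1\!\bigl(\tfrac{1}{2}a(a\cup_1 b)b+\tfrac{1}{4}AB^2,\;ab\bigr),$$
matching the defect of additivity of the $\mathrm{Pin}^{-}$ Arf invariant to the evaluation of $\rho^1$ on the commutator cocycle of Propositions~\ref{3.10} and~\ref{4.7}. Proving it is the hardest step: it reduces to tracking how the $\mathrm{Pin}^{-}$ quadratic refinements for the Poincar\'e duals of $\alpha$, $\beta$, and $\alpha+\beta$ inside $M$ combine, and is precisely where the sign $+\tfrac{1}{4}AB^2$ chosen in Definition~\ref{3.1} is forced by the model computation on the tangent circle bundle of $S^2$ referenced in Remark~\ref{3.3}. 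With compatibility in hand, the (routine) verification that $\rho^1$ is a homomorphism on $G^1(X)$, using the simplified product $(w,p,0)(v,q,0)=(w+v+(1/2)p\cup_1 q,\,p+q,\,0)$ together with Lemma~\ref{key} to absorb cochain-level discrepancies, then combines via Proposition~\ref{4.8} and equation~(\ref{rhoeqn}) to produce the homomorphism $\rho\colon G(X)\to\mathrm{Hom}(\Omega^{spin}_3(X),\R/\Z)$ whose value on $(w,p,a)$ against $(M,f)$ is the stated formula.
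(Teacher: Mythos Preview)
Your overall strategy---decompose the pairing as $\rho^1(w,p,0)+\lambda^1(a)$ and invoke Proposition~\ref{4.8}---matches the paper, and your treatment of $\lambda^1$ and of the compatibility condition (including the reduction to the $T_SS^2$ computation of Remark~\ref{3.3}) is on target. The genuine gap is in how you establish well-definedness and Spin bordism invariance of $\rho^1$.

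You try to handle the $\frac{1}{2}[Spin(Z)]$ term directly, invoking Lemma~\ref{key} for independence of the choices $(\varphi,u,t)$ and asserting that when $(M,f)$ bounds $(W,F)$ the map $\varphi\colon M\to S^2$ extends so that $Z$ bounds a framed surface in $W$. Neither step goes through. Lemma~\ref{key} concerns natural operations built from cochains; $[Spin(Z)]$ depends on the Spin structure of $M$ and is not such an operation, so the lemma cannot absorb it. And $\varphi$ need not extend over $W$: the obstruction to lifting a map $(W,M)\to(\mathbb{CP}^\infty,\mathbb{CP}^1)$ back to $(\mathbb{CP}^1,\mathbb{CP}^1)$ rel boundary lies in $H^4(W,M;\pi_3(S^2))\cong\Zee$ and is typically nonzero. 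So $[Spin(Z)]$ is neither separately choice-independent nor separately bordism-invariant; only the \emph{sum} of all the terms is.

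The paper closes this gap by introducing a second, equivalent formula (Definition~\ref{defn2}): for any Spin $W^4$ with $\partial W=M$ and any cocycle $P$ on $W$ extending $f^*p$, set $(1/2)\int_WP^2+\int_Mf^*w$. Well-definedness (two choices of $(W,P)$ glue to a closed Spin $4$-manifold on which $P^2$ is even), invariance under the relations $\D'(t,0)$, and Spin bordism invariance are then short Stokes arguments. Agreement with the stated formula is the geometric content: extend $\varphi$ to a map of pairs $(W,M)\to(\mathbb{CP}^2,\mathbb{CP}^1)$, available because $\Omega_3^{spin}(\mathbb{CP}^2)=0$, and identify the relative degree simultaneously with $\int_WP^2$ and with the relative self-intersection of the preimage surface, whose mod-$2$ reduction is $[Spin(Z)]$ (Lemma~\ref{prop1} and Corollary~\ref{cor7}). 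That identity is exactly what replaces your unjustified extension of $\varphi$ to $W\to S^2$.
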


\begin{rem}  Note that using the notation of the theorem, we have in $G^1(M)$
\begin{align*}
(f^*w,f^*p,f^*a) &= (f^*w,f^*p,f^*a)((1/2)tdt,dt,0) \\
&= (f^*w+(1/2)(f^*p\cup_1 dt+tdt),\varphi^*u,f^*a).
\end{align*}
\end{rem}

\subsection{The Definition of $\rho^1$}\label{sect1.1'}

Let $M$ be a closed, Spin $3$-manifold. Then any $[p]\in H^2(M;\Zee/2)$ lifts to an integral class. This class is represented by a map  $ M\to \Cee P^2$
which for dimension reasons can be factored through a map $\varphi\colon M\to \Cee P^1$. 
Such a  map $\varphi$ represents an element in $\Omega_3^{spin}(\Cee P^1)\cong \Omega_1^{spin}=\Zee/2$. The identification sends $\varphi$ to the Spin bordism class of the pre-image, $Z$, of any generic value $z$ of any  map $\varphi'$ homotopic to $\varphi$ and smooth in a neighborhood of $Z$.  The Spin bordism class is denoted $[Spin(\varphi)]$ and also $[Spin(Z)]$. [This preimage is a compact $1$-manifold with a normal trivialization in $M$ induced from the normal trivialization of $z$ in $\Cee P^1$. The Spin structure on $M$ and this normal trivialization for $Z$ determine a Spin structure on $Z$.]
Thus,  $\varphi\colon M\to \Cee P^1$ records more than the integral cohomology class of $Z$; it also records the framing which is an extra element in $\Zee/2$, which is the Spin bordism class determined by the framing.

On the other hand, $\Omega_3^{spin}(\Cee P^2)=0$ and hence any map
$\varphi\colon M\to \Cee P^2$ bounds a compact Spin manifold $F\colon W\to \Cee P^2$. It follows that given a $2$-cocycle $p$ on $M$ there is a compact Spin $4$-manifold $W$ and a $2$-cocycle $P$ on $W$ such that $P|_M=p$. These remarks lead us to our two definitions of the value of a triple $(w,p,0)$
representing an element of $G^1(X)$ on a Spin bordism representative $f\colon M\to X$. Of course by naturality it suffices for each Spin $3$-manifold $M$, to give the value on an element in 
$G^1(M)$ on the identity map $M\to M$, considered as representing a Spin bordism class of $M$.

\begin{defn}\label{defn1} 
Let $M$ be a compact Spin $3$-manifold.
Any triple $(w,p,0)$ representing an element of $G^1(M)$ is equivalent to a triple $(w',p',0)$ with the property that $p'=\varphi^*u$ for some ordered simplicial map $\varphi\colon M\to \Cee P^1$ and a reduced singular cocycle $u$ on $\Cee P^1$ representing the non-trivial $\Zee/2$ cohomology class.
We set
$$\langle (w,p,0),(M,{\rm Id}) \rangle_1 = \frac{1}{2}[Spin(\varphi)]+\int_Mw'.$$ 
For $(w,p,0)$ representing an element of $G^1(X)$ and $f\colon M\to X$ representing an element of $\Omega_3^{spin}(X)$ we define
$$\langle (w,p,0),(M,f)\rangle_1=\langle(f^*w,f^*p,0),(M,{\rm Id})\rangle_1.$$
\end{defn}

The second definition was proposed by Kapustin.

\begin{defn} \label{defn2}
Let $M$ be a compact Spin $3$-manifold. Fix an ordered triangulation of $M$. Let $(w,p,0)$ represent an element of $G^1(M)$. Let $W$ be a compact Spin $4$-manifold with boundary $M$ and let $P$ be a $2$-cocycle on $W$ whose restriction to $M$ is $p$. Then
$$\langle (w,p,0),(M,{\rm Id})\rangle_2 =\frac{1}{2}\int_WP^2+\int_Mw.$$
To evaluate the integral of $P^2$ over $W$ we choose a relative fundamental cycle for $W$ extending the fundamental cycle of $M$ 
that is the sum of the $3$-simplices of the given ordered triangulation.

For $(w,p,0)$ representing an element of $G^1(X)$ and $f\colon M\to X$ representing an element of $\Omega_3^{spin}(X)$ we define
$$\langle(w,p,0),(M,f)\rangle_2=\langle(f^*w,f^*p,0),(M,{\rm Id})\rangle_2.$$
\end{defn}

It is not clear at this point that either definition gives a well-defined pairing of $G^1(X)$ with $\Omega_3^{spin}(X)$, nor is it clear that they give the same pairing. We shall establish all these facts.

\subsection{Preliminary Remarks}
Suppose that $W$ is a compact, oriented $4$-manifold with boundary $M$ and suppose that $F\colon (W,M)\to( \Cee P^2,\Cee P^1)$ is a continuous map.
Fix a $2$-sphere $S\subset \Cee P^2$ meeting $\Cee P^1$ transversally in a single point $z$. Deforming $F$ by a homotopy of pairs allows us to assume that the map of pairs is transverse to $S$. By this we mean that $F$ is smooth in a neighborhood of the preimage of $S$, that $F|_M$ is transverse to $z\in S^2$, and that $F$ is transverse to $S$. Let $Z\subset M$ be $F^{-1}(z)$ and let $\Sigma\subset W$ be  $F^{-1}(S)$.
Then $\Sigma$ is a compact oriented surface meeting $M$ normally in $Z$, which is its boundary. As before, the normal bundle of $Z\subset M$ is trivialized. The group $H_4(\Cee P^2,\Cee P^1;\Zee)$ is identified with $\Zee$ with generator the image of the fundamental class of $\Cee P^2$ associated with the complex orientation. 
The map $F_*\colon H_4(W,M;\Zee)\to H_4(\Cee P^2,\Cee P^1;\Zee)$ sends the relative fundamental class of $(W,M)$ to a multiple of this
generator. This multiple is the {\em relative degree} of $F$.
It is also the number of points counted with sign in the preimage under $F$ of a generic point in $\Cee P^2\setminus \Cee P^1$.

\begin{claim}\label{claim1}
Suppose that $U_1$ and $U_2$ are singular integral $2$-cocycles on $\Cee P^2$, each representing the cohomology class Poincar\'e dual to $\Cee P^1$, with the property that the cocycle $(U_1\cdot U_2)|_{\Cee P^1}$ vanishes.
Then $\int_WF^*U_1\cdot F^*U_2$ is the relative degree of $F$.
\end{claim}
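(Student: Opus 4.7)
The plan is to exploit the hypothesis $(U_1\cdot U_2)|_{\Cee P^1}=0$ to view $U_1\cdot U_2$ as a \emph{relative} $4$-cocycle on the pair $(\Cee P^2,\Cee P^1)$, pull back via $F$ to a relative $4$-cocycle on $(W,M)$, and then apply naturality of the Kronecker pairing to reduce the claim to a standard cup-product computation in $H^*(\Cee P^2)$. There is no serious obstacle; the only point of care is that the cochain-level integral coincides with the relative homological pairing, which is automatic since the hypothesis forces the integrand to vanish on $M$.

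First I would work out the cohomology of the target pair. From the long exact sequence of $(\Cee P^2,\Cee P^1)$, together with $H^3(\Cee P^1;\Zee)=H^4(\Cee P^1;\Zee)=0$, both the restriction map $H^4(\Cee P^2,\Cee P^1;\Zee)\to H^4(\Cee P^2;\Zee)$ and the natural map $H_4(\Cee P^2;\Zee)\to H_4(\Cee P^2,\Cee P^1;\Zee)$ are isomorphisms; in particular the fundamental class $[\Cee P^2]$ corresponds to a generator of $H_4(\Cee P^2,\Cee P^1;\Zee)$. Since each $[U_i]$ is Poincar\'e dual to $[\Cee P^1]$ and therefore generates $H^2(\Cee P^2;\Zee)$, the cup product $[U_1]\cup[U_2]$ generates $H^4(\Cee P^2;\Zee)$ and evaluates to $1$ on $[\Cee P^2]$ (the transverse intersection number of two hyperplanes). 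Consequently the relative class $[U_1\cdot U_2]\in H^4(\Cee P^2,\Cee P^1;\Zee)$ evaluates to $1$ on the generator of $H_4(\Cee P^2,\Cee P^1;\Zee)$.

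Second, I would apply naturality. The hypothesis ensures that $F^*(U_1\cdot U_2)$ vanishes on $M$, so it defines a relative $4$-cocycle on $(W,M)$ whose evaluation on the chosen relative fundamental cycle is precisely $\int_W F^*U_1\cdot F^*U_2$. Naturality of the Kronecker pairing under the map of pairs $F\colon (W,M)\to(\Cee P^2,\Cee P^1)$ gives
\[
\int_W F^*U_1\cdot F^*U_2 \;=\; \langle [F^*(U_1\cdot U_2)],\,[W,M]\rangle \;=\; \langle [U_1\cdot U_2],\,F_*[W,M]\rangle.
\]
By the definition of relative degree recalled just before the statement of the claim, $F_*[W,M]$ is the relative degree of $F$ times the generator of $H_4(\Cee P^2,\Cee P^1;\Zee)$. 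Combining with the previous paragraph, the integral equals the relative degree of $F$, as asserted. Different choices of relative fundamental cycle affect the integrand only by values on $M$, on which $F^*(U_1\cdot U_2)$ vanishes, so the answer is unambiguous.
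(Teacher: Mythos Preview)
Your proof is correct and follows essentially the same approach as the paper's: both recognize $U_1\cdot U_2$ as a relative cocycle on $(\Cee P^2,\Cee P^1)$ evaluating to $1$ on the fundamental class, then pull back and apply naturality of the Kronecker pairing together with the definition of relative degree. The paper's proof is a terse two sentences; you have simply spelled out the details (the long exact sequence identification of $H^4(\Cee P^2,\Cee P^1)$ with $H^4(\Cee P^2)$, and the explicit naturality computation) that the paper leaves implicit.
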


 \begin{proof}
$ U_1\cdot U_2$ is a relative cocycle on $(\Cee P^2,\Cee 
P^1)$ and its cohomology class evaluates $1$ on the fundamental cycle of  $\Cee P^2$. Thus, its pullback to $(W,M)$ evaluated on a relative fundamental class of $(W,M)$ is the relative degree of the map $F$.
 \end{proof}
 
  \begin{rem}
Claim~\ref{claim1} holds for ordered simplicial cocycles $U_1, U_2$ if $F$ is an ordered simplicial map.
 \end{rem}
 
\begin{defn} Let $F\colon (W,M)\to (\Cee P^2,\Cee P^1)$ be transverse to $S$ with preimage $\Sigma$ meeting $M$ in $Z$. We define the {\em relative self-intersection of $\Sigma$} to be the intersection number of $\Sigma$ with a  section $\Sigma'$ of the normal bundle of $\Sigma\subset W$, a section
 that is generic in the family of all smooth sections  constant over $Z$ with respect to the given trivialization of the normal bundle of $Z\subset M$.
 \end{defn}

 \begin{claim}\label{cor1}
 Fix $F\colon (W,M)\to (\Cee P^2,\Cee P^1)$. Let $F'$ be a homotopic map of pairs that is smooth and transverse to $S$ with $\Sigma=(F')^{-1}(S)$.
 The relative degree of $F$ is equal to the relative self-intersection of $\Sigma$
 \end{claim}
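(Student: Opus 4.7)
The plan is to identify both sides of the asserted equality as the relative Euler number of a single complex line bundle on $\Sigma$, and then invoke naturality of the Euler class under pullback.

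First, I will unpack the right-hand side. By the definition given just above the statement, the relative self-intersection of $\Sigma$ is the signed count of zeroes of a generic smooth section of the normal line bundle $N_{\Sigma/W}$ whose restriction to $\partial\Sigma=Z$ agrees with the prescribed nonvanishing boundary trivialization $\tau$ of $N_{Z/M}$. This count is by definition the relative Euler number $e(N_{\Sigma/W},\tau)\in\Zee$.

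Next, I will identify this pair (bundle, boundary trivialization) with a pullback from $S$. Transversality of $F$ to $S$ makes $dF$ into an isomorphism of oriented line bundles $N_{\Sigma/W}\cong(F|_\Sigma)^*N_{S/\Cee P^2}$; similarly, the boundary trivialization $\tau$ is by construction the pullback under $F|_Z\equiv z$ of the distinguished trivialization $\tau_0$ of $N_{S/\Cee P^2}|_z$ coming from $T_z\Cee P^1$ (this is exactly where transversality of $F|_M$ to $z$ and of $S$ to $\Cee P^1$ at $z$ enters). So $(N_{\Sigma/W},\tau)$ is pulled back via the map of pairs $F|_\Sigma\colon(\Sigma,Z)\to(S,z)$ from $(N_{S/\Cee P^2},\tau_0)$, and by naturality of the relative Euler number for complex line bundles,
$$e(N_{\Sigma/W},\tau)=\deg\bigl(F|_\Sigma\colon(\Sigma,Z)\to(S,z)\bigr)\cdot e(N_{S/\Cee P^2},\tau_0).$$

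Finally, I will evaluate the two factors. The factor $e(N_{S/\Cee P^2},\tau_0)$ equals the self-intersection $S\cdot S=1$ in $\Cee P^2$, since imposing a specific nonvanishing value of a section at a single point of a closed surface does not change the Euler number of a line bundle over it. And the relative degree of $F|_\Sigma$ can be read off by counting signed preimages under $F|_\Sigma$ of a generic point $z'\in S\setminus\{z\}$; choosing $z'\in S\setminus\Cee P^1$ forces $(F|_\Sigma)^{-1}(z')=F^{-1}(z')\subset W\setminus M$, so its signed count is precisely the relative degree of $F\colon(W,M)\to(\Cee P^2,\Cee P^1)$ as defined in the text. Multiplying the two factors yields the claim.

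The main obstacle is the relative Euler class formalism: one must verify both the pullback identification of $N_{\Sigma/W}$ together with its boundary trivialization, and the naturality statement under the map of pairs $F|_\Sigma\colon(\Sigma,Z)\to(S,z)$. Once this bookkeeping is in place, everything else is direct. An alternative route via Claim~\ref{claim1} would require exhibiting explicit integral cocycles $U_1,U_2$ on $\Cee P^2$ Poincar\'e dual to $\Cee P^1$ with $U_1\cup U_2$ vanishing on $\Cee P^1$ and then interpreting $\int_W F^*U_1\cdot F^*U_2$ as a geometric intersection number of $\Sigma$ with a transverse push-off; this seems more cumbersome than the Euler-class argument.
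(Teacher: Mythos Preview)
Your argument is correct and is essentially a repackaging of the paper's own proof in the language of relative Euler numbers. The paper proceeds concretely: it chooses a section $S'$ of $N_{S/\Cee P^2}$ vanishing only at a generic value $x$ of $F'|_\Sigma$, pulls it back to $\Sigma' = (F')^{-1}(S')$, and observes that the signed intersection $\Sigma \cdot \Sigma'$ counts, with local degree signs, the sheets of $(F')^{-1}$ of a small ball around $x$---which is the relative degree. Your naturality statement $e(N_{\Sigma/W},\tau)=\deg(F'|_\Sigma)\cdot e(N_{S/\Cee P^2},\tau_0)$ is proved precisely by this pull-back-a-section argument, so the two proofs differ only in level of abstraction: the paper writes out the explicit section, while you name the principle. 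One small notational slip: since $\Sigma=(F')^{-1}(S)$, the map whose degree you compute should be written $F'|_\Sigma$ rather than $F|_\Sigma$.
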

 
 \begin{proof}
Fix a point $x\in S$ which is a generic value for $F'|_\Sigma$. Then there is a small $4$-ball $B$ centered at
$x$ such that the restriction of $F'$ to $(F')^{-1}(B)=\coprod_i B_i$ is a (trivial) covering projection. We associate to each $B_i$  the sign that is the local degree of $F$ on $B_i$.
Choose $S'$ to be a generic section of the normal bundle of $S$ vanishing only at $x$.
Let $\Sigma'=(F')^{-1}(S')$. It is a generic section of the normal bundle of $\Sigma$ in $W$, a section that is constant (in the given trivialization) on the boundary. Clearly, $\Sigma\cap\Sigma'$ is contained in $(F')^{-1}(B)$ and has one point, a point of transverse intersection, in each $B_i$. The sign of the intersection at that point is the local degree of $F$ on $B_i$. It follows that $\Sigma\cdot \Sigma'$ is the relative degree of $F'$ which is the relative degree of $F$.  
\end{proof}

Now we assume that $W$ is Spin and relate the relative self-intersection to the Spin bordism class determined by the boundary.

\begin{lem}\label{prop1}
Let $W$ be a compact Spin $4$-manifold and let $F\colon (W,M)\to (\Cee P^2,\Cee P^1)$ be transverse to $S$ with pre-image $\Sigma$ with boundary $Z$. Then the relative self-intersection of $\Sigma$ reduced modulo two is equal to $[Spin(Z)]$.
\end{lem}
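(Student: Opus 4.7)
The plan is to identify the relative self-intersection of $\Sigma$ with the relative Euler number $e(\nu, \phi) \in \Zee$ of the oriented rank-$2$ normal bundle $\nu := \nu(\Sigma \subset W)$, where $\phi$ is the canonical boundary trivialization of $\nu|_Z$. This trivialization comes from the identification $\nu|_Z = F^*(\nu(S)|_z)$, and because $S$ and $\Cee P^1$ meet transversely at $z$, the fiber $\nu(S)|_z$ canonically equals $T_z \Cee P^1$; thus $\phi$ agrees with the trivialization giving the framing of $\nu(Z \subset M)$ via $F|_M$. The lemma reduces to the congruence $e(\nu, \phi) \equiv [Spin(Z)] \pmod 2$.

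Since $\Sigma$ has non-empty boundary, it deformation retracts onto a $1$-complex, so $H^2(\Sigma; \Zee) = 0$ and $\nu$ is trivializable. Pick a global trivialization $\tau$ of $\nu$ and let $\tau_Z := \tau|_Z$. Then $\tau$ is a nowhere-vanishing section of $\nu$ extending $\tau_Z$, so $e(\nu, \tau_Z) = 0$. On each component $Z_i \cong S^1$ of $Z$ the two framings $\phi$ and $\tau_Z$ differ by an integer winding number $k_i \in \pi_1(SO(2)) = \Zee$. Changing the framing by these winding numbers changes the relative Euler class by $\sum_i k_i$ on the one hand and changes the induced Spin structure on $Z$ by $\sum_i k_i \pmod 2$ on the other, since a full twist of the normal framing of an embedded circle in a Spin $3$-manifold flips the inherited Spin structure on that circle. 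Both sides of the asserted congruence thus change by the same mod-$2$ amount, and it suffices to prove the base case: when the framing on $Z$ is $\tau_Z$, the Spin bordism class of $Z$ is zero.

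For the base case, the Spin structure on $W$ restricted to $\Sigma$ endows $TW|_\Sigma = T\Sigma \oplus \nu$ with a Spin structure; combined with the trivialization $\tau$ of $\nu$, this yields a Spin structure on $T\Sigma$, making $\Sigma$ a Spin $2$-manifold with boundary $Z$. The boundary then inherits a Spin structure using the outward normal to $Z$ in $\Sigma$. I claim this boundary Spin structure on $Z$ agrees with the one induced on $Z$ from $M$ using the framing $\tau_Z$ in place of $\phi$. Both arise by restricting the Spin structure on $W$ to $TW|_Z$ and trivializing the three directions orthogonal to $TZ$: the $M$-induced one uses the decomposition $TW|_Z = TZ \oplus \nu(Z \subset M) \oplus \nu(M \subset W)$, trivializing the second summand by $\tau_Z$ and the third by the outward normal to $M$ in $W$; the $\Sigma$-induced one uses $TW|_Z = TZ \oplus \nu(Z \subset \Sigma) \oplus \nu|_Z$, trivializing $\nu(Z \subset \Sigma)$ by the outward normal to $Z$ in $\Sigma$ and $\nu|_Z$ by $\tau_Z$. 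A short local coordinate computation in a tubular neighborhood of $Z$ in $W$ (modeling $W$ as $M \times [0,1)$ and $\Sigma$ as $Z \times \{0\} \times [0,1)$) identifies the outward normal to $Z$ in $\Sigma$ with the outward normal to $M$ in $W$ along $Z$, so the two decompositions and hence the two Spin structures on $TZ$ coincide.

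The main technical obstacle is this orientation-and-stabilization bookkeeping for the comparison of the two Spin structures on $Z$; once it is settled, $\Sigma$ is a Spin null-bordism of $Z$ equipped with this common Spin structure, so $[Spin(Z)] = 0$ when computed with the framing $\tau_Z$, and the proof is complete by the reduction of the previous paragraph.
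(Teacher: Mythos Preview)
Your approach is essentially the same as the paper's: compare the given normal framing $\phi$ of $Z$ with a framing $\tau_Z$ that extends over $\Sigma$, note that with the extendable framing $Z$ is Spin-nullbordant via $\Sigma$, and then track how both the relative Euler number and the Spin bordism class change under the framing shift.

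There is one genuine gap. You write ``Since $\Sigma$ has non-empty boundary, it deformation retracts onto a $1$-complex, so $H^2(\Sigma;\Zee)=0$ and $\nu$ is trivializable.'' But $\Sigma$ may well have \emph{closed} components in addition to the components meeting $Z$; for such components $H^2\neq 0$ and $\nu$ need not be trivial. This is precisely where the Spin hypothesis on $W$ does real work and your argument does not yet invoke it for this purpose: because $W$ is Spin, every closed component of $\Sigma$ has even self-intersection, so these components contribute $0$ modulo $2$ to the relative self-intersection and can be discarded. The paper handles exactly this point by choosing the extendable trivialization $T'$ only over the union of \emph{non-closed} components of $\Sigma$ and then observing that the closed components contribute evenly. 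Once you insert this observation, your argument goes through and matches the paper's.
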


\begin{proof}
Denote by $T$ the trivialization of the normal bundle of $Z$ induced by $F$. There is another trivialization of this normal bundle, denoted $T'$,
which extends to a normal framing of the union of the non-closed components of $\Sigma$. 
The difference between  $T$ and $T'$ is a map $Z\to SO(2)$ and it represents an element in $H_1(SO(2))=\Zee$. Since $W$ is Spin every closed component of $\Sigma$ has even self-intersection. Hence, the difference  element between $T$ and $T'$ is equal modulo $2$ to the relative self-intersection of $\Sigma$ (with respect to the trivialization $T$). On the other hand, the Spin structure of $Z$ induced by the trivialization $T'$ bounds the Spin manifold consisting of the non-closed components of $\Sigma$ with Spin structure induced by the extension of $T'$. Hence, the Spin bordism class of $Z$ determined by the normal trivialization $T'$ is trivial.  Changing the normal trivialization by a map of $Z\to SO(2)$ changes the Spin bordism class of  $Z$ by the class represented by $Z\to SO(2)\subset SO(3)$ in $H_1(SO(3);\Zee)=\Zee/2$. 
Thus, the Spin bordism class of $Z$ determined by $T$ is equal to the relative self-intersection of $\Sigma$ modulo $2$.
\end{proof}

\begin{cor}\label{cor7}
1. Under the hypotheses of the Lemma~\ref{prop1}, suppose that $U$ is a fundamental singular two-dimensional cocycle for $\Cee P^2$ whose restriction to $\Cee P^1$, denoted $u$, satisfies $u^2=0$. Then $F^*U$ is a $2$-cocycle extending $(F|_M)^*u$ and
$\int_WF^*U^2$ is congruent modulo $2$ to the Spin bordism class of $Z=(F|_M)^{-1}(z)$ for generic $z\in \Cee P^1$. 

2. If $(w,p,0)$ represents an element of $G^1(X)$ and $p=\varphi^*u$ where $u$ is a reduced singular cocycle on $\Cee P^1$ representing the generator and $\varphi$ is an ordered simplicial map, then
$$\langle (w,p,0),(M,f)\rangle_1=\langle (w,p,0),(M,f)\rangle_2.$$
\end{cor}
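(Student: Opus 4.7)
The plan is to establish part (1) by chaining together the three preceding results, and then to deduce part (2) by building a particular Spin null-bordism directly from $\varphi$ and applying part (1) to it.

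\textit{Part (1).} With $U_1 = U_2 = U$, the hypothesis $(U\cdot U)|_{\Cee P^1} = u^2 = 0$ is exactly our assumption, so Claim~\ref{claim1} gives that $\int_W (F^*U)^2$ equals the relative degree of $F$. Claim~\ref{cor1} identifies this relative degree with the relative self-intersection of $\Sigma = F^{-1}(S)$, and Lemma~\ref{prop1} identifies the latter modulo $2$ with $[Spin(Z)]$. Chaining these three identifications yields the claim.

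\textit{Part (2).} Since $p = \varphi^*u$, we may take $w'=w$ and $p'=p$ in Definition~\ref{defn1}, so the two pairings reduce respectively to
\[
\tfrac{1}{2}[Spin(\varphi)] + \int_M w \qquad \text{and} \qquad \tfrac{1}{2}\int_W P^2 + \int_M w
\]
for any Spin null-bordism $(W,P)$ of $(M,p)$. It therefore suffices to exhibit one such $(W,P)$ for which part~(1) forces $\int_W P^2 \equiv [Spin(\varphi)] \pmod 2$.

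Using $\Omega_3^{\mathrm{Spin}}(\Cee P^2) = 0$, I would extend the composition $M \xrightarrow{\varphi} \Cee P^1 \hookrightarrow \Cee P^2$ to a map $F\colon W \to \Cee P^2$ on a compact Spin $4$-manifold $W$ with $\partial W = M$. Next, put compatible ordered simplicial structures on $W$ and $\Cee P^2$, refining those on $M$ and $\Cee P^1$, so that $F$ becomes an ordered simplicial map (via subdivision and simplicial approximation). Choose an integral $2$-cocycle $U$ on $\Cee P^2$ representing the generator. The crucial observation is that $\Cee P^1 \cong S^2$ admits ordered simplicial triangulations containing no $4$-simplices, so $(U|_{\Cee P^1})^2$ vanishes at the cochain level. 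This places us in the hypotheses of part~(1), which gives $\int_W (F^*U)^2 \equiv [Spin(Z)] = [Spin(\varphi)] \pmod 2$. Setting $P = F^*U \bmod 2$ then yields $\tfrac{1}{2}\int_W P^2 \equiv \tfrac{1}{2}[Spin(\varphi)] \pmod 1$, which is the desired equality of pairings.

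The main obstacle is the cochain-level matching $P|_M = p$: the given reduced singular cocycle $u$ and the simplicial restriction $U|_{\Cee P^1}$ typically differ by a coboundary $d\tau$, so naive $F^*U$ restricts to $p + d(\varphi^*\tau)$ on $M$ rather than to $p$ itself. To handle this I would invoke Proposition~\ref{5.1}'s canonical identification of the singular and ordered simplicial $G$-groups, together with the explicit $\mathbf{D}'$-equivalence $(w,p,0) \sim (w + \tfrac{1}{2}\varphi^*\tau\, d\varphi^*\tau,\ p + d\varphi^*\tau,\ 0)$ in $G^1(M)$, to replace the original triple by an equivalent one whose $p$-coordinate is exactly $\varphi^*(U|_{\Cee P^1})$. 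After this reduction, the argument above applies verbatim.
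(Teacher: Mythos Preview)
Your Part (1) is exactly the paper's argument: chain Claim~\ref{claim1}, Claim~\ref{cor1}, and Lemma~\ref{prop1}.

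For Part (2) your overall strategy matches the paper's, but the ``obstacle'' you identify is self-inflicted, and your proposed fix via $\mathbf{D}'$-equivalence is circular at this point in the argument. The difficulty is that Corollary~\ref{cor7} is proved \emph{before} either pairing has been shown to be well-defined on $G^1$-equivalence classes; that well-definedness comes only afterwards, and in fact uses Corollary~\ref{cor7} as an input (see Corollary~\ref{rho1cor}). So you cannot, at this stage, pass to an equivalent triple $(w',p',0)$ and claim both $\langle\cdot,\cdot\rangle_1$ and $\langle\cdot,\cdot\rangle_2$ are unchanged.

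The paper avoids the obstacle entirely by a better choice of $U$: rather than picking an arbitrary cocycle on $\Cee P^2$ representing the generator, it takes $U$ to be a reduced singular cocycle on $\Cee P^2$ \emph{extending the given} $u$. Then $F^*U$ is a singular cocycle on $W$ whose restriction to $M$ is exactly $\varphi^*u = p$, so $(W,\,P=F^*U)$ is a legitimate input to Definition~\ref{defn2} for the triple $(w,p,0)$ with no adjustment needed. One then passes to the induced reduced ordered simplicial cocycles (same names), where $u^2=0$ holds for the dimension reason you noted, and applies Part~(1) in its ordered simplicial form (the Remark following Claim~\ref{claim1}). This gives $\int_W (F^*U)^2 \equiv [Spin(Z)] \pmod 2$ directly, hence $\langle(w,p,0),(M,f)\rangle_1 = \langle(w,p,0),(M,f)\rangle_2$.
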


\begin{proof}
Statement 1 is  immediate from Lemma~\ref{prop1}, Claim~\ref{cor1}, and Claim~\ref{claim1}. It holds both in the singular context and in the ordered simplicial context.
As for the second statement, let $\varphi\colon M\to \Cee P^1$ be an ordered simplicial map and suppose that $p=\varphi^*u$ for a reduced singular cocycle $u$ representing the generator in cohomology.
Then we can extend $\varphi$ to a map $F\colon W\to \Cee P^2$ with $W$ being a compact $4$-dimensional Spin manifold. We can extend the ordered triangulations of $M$ and $\Cee P^1$ to ordered triangulations of $W$ and $\Cee P^2$ in such a way that $F$ can be assumed to be an ordered simplicial map. Let $U$ be a reduced singular cocycle on $\Cee P^2$ extending $u$. 
We now replace $u$ and $U$ by the reduced ordered simplicial cocycles they induce (keeping the same names for these new objects).
Since $u$ is a reduced ordered simplicial cocycle, we see that $u^2=0$. Thus, the first statement of the corollary applies to show that
$\int_WF^*U^2=[Spin(Z)]$ where $Z$ is the preimage of a barycenter of a $2$-simplex of $\Cee P^1$.  Thus, the two definitions agree in this special case.
\end{proof}

\subsection{The pairing $G^1(X)\times \Omega_3^{spin}(X)\to \Ar/\Zee$}
Now we establish that both definitions give the same pairing
$$G^1(X)\times \Omega_3^{spin}(X)\to \Ar/\Zee.$$

\begin{claim}
Fixing $(w,p,0)$, the representative $f\colon M\to X$, and an ordered triangulation of $M$, the value $\langle (w,p,0),(M,f)\rangle_2$ is independent of the choice of Spin manifold $W$ bounding $M$ and of
the cocycle $P$ extending $f^*p$. 
\end{claim}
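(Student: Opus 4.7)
The plan is to reduce the statement to the following fact: for any closed Spin $4$-manifold $X$ and any cocycle $\tilde P \in Z^2(X;\Zee/2)$, the quantity $(1/2)\int_X \tilde P^2$ vanishes in $\Ar/\Zee$. This is a consequence of Wu's formula: one has $[\tilde P]^2 = Sq^2[\tilde P] = v_2(X)\cup [\tilde P] \in H^4(X;\Zee/2)$, and $v_2(X) = w_2(X) + w_1(X)^2 = 0$ for a Spin manifold. Hence $(1/2)[\tilde P]^2 = 0 \in H^4(X;\Ar/\Zee)$, so $(1/2)\tilde P^2$ is a coboundary and integrates to $0$ on a fundamental cycle of $X$.

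First I would verify that $\int_W (1/2)P^2$ does not depend on the relative fundamental cycle $[W,M]$ chosen: two such cycles differ by an element of $Z_4(W;\Zee)$, and since $H_4(W;\Zee) = 0$ for a compact $4$-manifold with nonempty boundary, any such cycle is the boundary of some $5$-chain $c$; then $(1/2)P^2(\partial c) = 0$ since $(1/2)P^2$ is a cocycle. Next, given two choices $(W_1,P_1)$ and $(W_2,P_2)$, each bounding the fixed ordered triangulation of $M$ with $P_i|_M = p$, I would form the closed Spin $4$-manifold $X = W_1 \cup_M (-W_2)$ equipped with an ordered simplicial structure extending those on the two sides. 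The cocycles $P_1,P_2$, which agree on $M$, glue to a cocycle $\tilde P \in Z^2(X;\Zee/2)$, and the chain $[W_1,M] - [W_2,M]$ is an absolute fundamental cycle for $X$, yielding
$$\int_X (1/2)\tilde P^2 \;=\; \int_{W_1}(1/2)P_1^2 \;-\; \int_{W_2}(1/2)P_2^2 \;\in\; \Ar/\Zee.$$
By the Wu argument the left side vanishes, so the two integrals on the right agree modulo $\Zee$.

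The main obstacle is the combinatorial bookkeeping needed to glue ordered triangulations and Spin structures on $W_1$ and $-W_2$ into an ordered simplicial structure on $X$, and to verify that $\tilde P$ is indeed a cocycle on $X$ whose restrictions recover $P_1$ and $P_2$. In the ordered simplicial setting of \S\ref{sect5} this is straightforward: each simplex of $X$ lies in $W_1$ or in $W_2$, with simplices in $M$ shared and receiving the common value $p$, and the cocycle condition $d\tilde P(\tau) = 0$ on a $3$-simplex $\tau$ is inherited from the cocycle condition on whichever of $W_1, W_2, M$ contains $\tau$. Once these points are checked, the Wu-formula vanishing closes the argument without further work.
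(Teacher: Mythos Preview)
Your proposal is correct and follows essentially the same approach as the paper: glue $(W_1,P_1)$ and $(-W_2,P_2)$ along $M$ to obtain a closed Spin $4$-manifold with a glued cocycle, then use that the square of a $\Zee/2$ $2$-cocycle integrates to an even number on a closed Spin $4$-manifold. The paper states this last fact without justification, whereas you supply the Wu-formula argument and also explicitly verify independence of the relative fundamental cycle; these are welcome elaborations but not a different route.
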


\begin{proof}
Suppose we have two choices of extensions $(W,P)$ and $(W',P')$. Then the union of $W$ and $-W'$ along $M$ is a closed Spin $4$ manifold and there is a cocycle $Q$ on the union that restricts to $P$ on $W$ and $P'$ on $W'$. Hence,
$$\int_WP^2-\int_{W'}(P')^2=\int_{W\cup -W'}Q^2,$$
and since $W\cup -W'$ is Spin the right-hand side is even. 
The result follows immediately.
\end{proof}

\begin{lem}
Given a Spin bordism representative $f\colon M\to X$ and an ordered triangulation of $M$
and triples $(w,p,0)$ and $(w',p',0)$ that are equal in $G^1(X)$ we have
$$\langle (w,p,0),(M,f)\rangle_2= \langle (w',p',0),(M,f)\rangle_2$$
\end{lem}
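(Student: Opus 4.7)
The plan is to reduce to the case $f = \mathrm{Id}$ by naturality, unpack the relation $(w,p,0) = (w',p',0)$ in $G^1(M)$ as an explicit cochain identity, and then exploit a Stokes-type calculation for $\mathbb{R}/\mathbb{Z}$ cochains on the bounding Spin $4$-manifold.

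First I would unpack the equivalence. Since both triples lie in $G^1$ (third slot zero), the relation $(w,p,0) = (w',p',0)\cdot \mathbf{D'}(t,x)$ in $\mathbf{C}$ forces $dx=0$ in the third slot. Applying the product formula from Definition~\ref{3.1} with $a=0$, $b=dx=0$ yields $p = p'+dt$ and, in $\overline{C}^{3}(\mathbb{R}/\mathbb{Z})$,
$$w = w' + \tfrac{1}{2}t\,dt + \tfrac{1}{2}(p'\cup_1 dt).$$
Choose cochain representatives so that $w = w' + \tfrac{1}{2}t\,dt + \tfrac{1}{2}(p'\cup_1 dt) + df$ for some $f\in C^2(M;\mathbb{R}/\mathbb{Z})$.

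Next I would set up the extension. Pick a Spin $4$-manifold $W$ with $\partial W = M$ and a $\mathbb{Z}/2$-cocycle $P'$ on $W$ restricting to $p'$, as in Definition~\ref{defn2}. Since any cochain on $M$ extends to $W$, pick $T\in C^1(W;\mathbb{Z}/2)$ with $T|_M = t$. Then $P := P' + dT$ is a $\mathbb{Z}/2$-cocycle on $W$ extending $p = p' + dt$. The heart of the argument is the identity
$$P^2 - (P')^2 \;=\; P'\cdot dT + dT\cdot P' + (dT)^2 \;=\; d\bigl(P'\cup_1 dT + T\cdot dT\bigr)\in C^4(W;\mathbb{Z}/2),$$
using $d(P'\cup_1 dT) = P'\cdot dT + dT\cdot P'$ (the $\cup_1$ Leibniz rule, since $P'$ and $dT$ are cocycles) and $d(T\cdot dT) = (dT)^2$. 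Applying the coefficient map $\mathbb{Z}/2 \to \mathbb{R}/\mathbb{Z}$ gives
$$\tfrac{1}{2}\bigl(P^2 - (P')^2\bigr) = d\Bigl(\tfrac{1}{2}\bigl(P'\cup_1 dT + T\cdot dT\bigr)\Bigr) \in C^4(W;\mathbb{R}/\mathbb{Z}).$$

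Finally I would apply Stokes using the chosen relative fundamental chain $[W,M]$ with $\partial[W,M]=[M]$ to get
$$\tfrac{1}{2}\int_W P^2 - \tfrac{1}{2}\int_W (P')^2 = \int_M \tfrac{1}{2}\bigl(p'\cup_1 dt + t\cdot dt\bigr),$$
and combine with the elementary identity (using $\int_M df = 0$)
$$\int_M w - \int_M w' = \int_M \tfrac{1}{2}\bigl(t\cdot dt + p'\cup_1 dt\bigr).$$
Adding, the difference of the two pairings is $2\cdot\tfrac{1}{2}\int_M\!(p'\cup_1 dt + t\,dt)$. The integrand takes values in the image of $\mathbb{Z}/2\hookrightarrow\mathbb{R}/\mathbb{Z}$ (via $1\mapsto 1/2$), so doubling kills it: the difference is $0$ in $\mathbb{R}/\mathbb{Z}$.

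The main obstacle is the first step: translating the abstract equality in $G^1(X)$ into a usable cochain-level coboundary formula requires careful bookkeeping with the non-abelian product in $\mathbf{C}$ (and the fact that the $G^1$ condition forces $dx=0$, so no $x$-term survives). Once that concrete formula for $w-w'$ is in hand, the $\cup_1$ computation and Stokes argument are short; the pleasing feature is that the Stokes boundary contribution on $M$ matches exactly the contribution from $\int_M(w-w')$, so that they double-up and vanish mod $\mathbb{Z}$.
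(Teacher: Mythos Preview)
Your proposal is correct and follows essentially the same approach as the paper: both arguments extend $t$ to a cochain $T$ on a Spin $4$-manifold $W$, use $P = P' + dT$ as the cocycle extension of $p$, and then compare $\tfrac{1}{2}\int_W P^2$ with $\tfrac{1}{2}\int_W (P')^2$ via the identity $P'\,dT + dT\,P' + (dT)^2 = d(P'\cup_1 dT + T\,dT)$ and Stokes' theorem, observing that the boundary term exactly cancels the $\int_M(w-w')$ contribution. Your unpacking of the $G^1$ relation (forcing $dx=0$) and the final ``doubling kills $\tfrac{1}{2}\ZZ/2$'' step are slightly more explicit than the paper's presentation, but the substance is identical.
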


\begin{proof}
Choose a Spin $4$-manifold $W$ bounding $M$ over which the cohomology class of $f^*p$ extends. Let $P$ be a cocycle on $W$ extending $p$.
By the definition of equivalence in $G^1(X)$ there is a $1$-cochain $t$ on $X$ such that $p'=p+dt$ and $w'=w+(1/2)(tdt+p\cup_1dt)$.
There is an extension $T$ over $W$ of $f^*t$. Then
\begin{align*} 
\langle (w',p',0),(M,f)\rangle_2 &=\frac{1}{2}\int_W(P+dT)^2+\int_Mf^*w'\\
 &=\frac{1}{2}\int_W(P+dT)^2+\int_Mf^*\bigl(w+(1/2)(tdt+p\cup_1dt)\bigr)\\
 &=\frac{1}{2}\int_WP^2+\int_Mf^*w =\langle(w,p,0),(M,f)\rangle_2,
\end{align*}
where in the last step we use the fact that
$$(PdT+dTP+dT^2)|_M=d\left((1/2)tdt+p\cup_1dt\right)$$
and invoke  Stokes' Theorem.
\end{proof}

\begin{lem}
Fix an ordered triangulation of $M$.
If $f\colon M\to X$ extends to $F\colon W\to X$ where $W$ is a compact Spin $4$-manifold bounding $M$, then for any $(w,p,0)$ representing an element in
$G^1(X)$ we have
$$\langle (w,p,0),(M,f)\rangle_2=0.$$
\end{lem}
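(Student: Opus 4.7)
The plan is to exploit the invariance of $\langle\cdot,\cdot\rangle_2$ under the choice of bounding data that was just established in the preceding claim. Since $\langle\cdot,\cdot\rangle_2$ does not depend on which Spin $4$-manifold bounding $M$ or which $2$-cocycle extending $f^*p$ we use, and since by hypothesis $f$ extends to $F\colon W\to X$, we may make the tautological choice: take $W$ itself as the bounding Spin $4$-manifold, and $P=F^*p$ as the cocycle on $W$ extending $f^*p$. With these choices the definition gives
\[
\langle (w,p,0),(M,f)\rangle_2 \;=\; \tfrac{1}{2}\!\int_W (F^*p)^2 \;+\; \int_M f^*w.
\]

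The key step is then simply to pull the $G^1(X)$-defining cochain identity $dw+(1/2)p^2=0$ in $C^4(X;\Ar/\Zee)$ back along $F$, yielding
\[
d(F^*w) + \tfrac{1}{2}(F^*p)^2 \;=\; 0 \qquad\text{in } C^4(W;\Ar/\Zee).
\]
Evaluate both sides on the relative fundamental cycle $[W,M]$, which by the convention fixed in Definition~\ref{defn2} is required to have $\partial[W,M]=[M]$. The cochain-level Stokes identity
\[
d\eta\,([W,M]) \;=\; \eta\bigl(\partial[W,M]\bigr) \;=\; \eta([M])
\]
applied to $\eta=F^*w$ then gives $\tfrac{1}{2}\int_W(F^*p)^2 = -\int_M f^*w$. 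Substituting back, the two terms of the pairing cancel and $\langle(w,p,0),(M,f)\rangle_2 = 0$.

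I do not anticipate a genuine obstacle here: the content is entirely Stokes' theorem at the cochain level, and the only bookkeeping is the sign convention relating the orientation of $[W,M]$ to that of $[M]$, which is precisely the convention Definition~\ref{defn2} singles out. That choice of sign is exactly what causes the two summands to cancel rather than double up.
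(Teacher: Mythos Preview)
Your argument is correct and is essentially identical to the paper's own proof: both choose the tautological bounding data $(W,F^*p)$, pull back the relation $dw+(1/2)p^2=0$ along $F$, and apply Stokes' theorem to cancel the two summands. The paper just writes this more tersely as $\int_W\bigl((1/2)F^*p^2+dF^*w\bigr)=0$.
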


\begin{proof}
The cocycles $F^*p$ and $F^*w$ extend $f^*p$ and $f^*w$ respectively, and $dw+(1/2)p^2=0$, hence
\begin{align*}
\langle (w,p,0),(M,f)\rangle_2 &=\frac{1}{2}\int_WF^*p^2+\int_Mf^*w\\
&=\int_W\bigl(\frac{1}{2}F^*p^2+dF^*w\bigr)=0\\
\end{align*}
\end{proof}

Now that we have established bordism invariance, it follows immediately that the pairing given by Definition~\ref{defn2} is independent of the ordered triangulation of $M$ since any two such are connected by an ordered triangulation of $M\times I$.

\begin{cor}\label{rho1cor}
Both Definition~\ref{defn1} and Definition~\ref{defn2} determine the same well-defined pairing
$$G^1(X)\times \Omega_3^{spin}(X)\to \Ar/\Zee.$$
This pairing is linear in the second variable.
\end{cor}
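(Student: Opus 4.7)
The plan is to use Definition~\ref{defn2} as the anchor. The three preceding lemmas already show that $\langle\cdot,\cdot\rangle_2$ is independent of the Spin filling $(W,P)$, respects $G^1(X)$-equivalence in the first variable, and vanishes whenever $(M,f)$ is itself a Spin boundary. To upgrade the last statement to full bordism invariance, I would observe that the formula $(1/2)\int_W P^2 + \int_M w$ is manifestly additive under disjoint unions of Spin fillings and changes sign under orientation reversal of $M$ and $W$. Given a Spin bordism $(W,F)$ from $(M_0,f_0)$ to $(M_1,f_1)$, its boundary is $(M_0\sqcup -M_1, f_0\sqcup f_1)$, so the vanishing lemma together with these two formal properties forces the pairing values on $(M_0,f_0)$ and $(M_1,f_1)$ to agree. (Independence of the chosen ordered triangulation of $M$ is the special case $X=M$, $W=M\times I$, already cited in the text.) Thus $\langle\cdot,\cdot\rangle_2$ descends to a well-defined pairing on $G^1(X)\times\Omega_3^{spin}(X)$.

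To identify Definition~\ref{defn1} with Definition~\ref{defn2}, I would argue that every triple $(w,p,0)$ evaluated against $f\colon M\to X$ is $G^1$-equivalent to a triple of the special form used in Definition~\ref{defn1}. The preliminary discussion in the section shows that on the closed oriented $3$-manifold $M$ the group $H^3(M;\Zee)$ is torsion-free, so the integral Bockstein of $[f^*p]$ vanishes and $[f^*p]\in H^2(M;\Zee/2)$ lifts to an integral class. Its classifying map $M\to K(\Zee,2)=\Cee P^\infty$ is homotopic, by cellular approximation and the fact that $\dim M=3$, to a map into the $3$-skeleton $\Cee P^1$; simplicial approximation then produces an ordered simplicial $\varphi\colon M\to \Cee P^1$. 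Choosing a reduced singular cocycle $u$ on $\Cee P^1$ representing the generator of $H^2(\Cee P^1;\Zee/2)$, we have $\varphi^*u-f^*p=dt$ for some $1$-cochain $t$, and the relation $((1/2)tdt,dt,0)\in\mathrm{Im}(\mathbf{D}')$ lets us replace $(f^*w,f^*p,0)$ by an equivalent triple $(w',\varphi^*u,0)$. Corollary~\ref{cor7}.2 gives $\langle\cdot,\cdot\rangle_1=\langle\cdot,\cdot\rangle_2$ on this preferred representative, and since $\langle\cdot,\cdot\rangle_2$ already respects $G^1$-equivalence, this simultaneously establishes that Definition~\ref{defn1} is well-defined (independent of all the choices of $\varphi$, $u$, and $w'$) and that the two pairings coincide.

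Linearity in the second variable is immediate from Definition~\ref{defn2}: a disjoint union of Spin representatives can be evaluated with a disjoint union of Spin fillings, and the defining formula is additive under such disjoint unions. I expect the main obstacle to be the comparison argument in the second paragraph, because Definition~\ref{defn1} is only formally applicable after a representative-changing step, so its well-definedness must be extracted as a consequence of the Definition~\ref{defn2} theory rather than checked directly; in particular one must be careful that the representative-changing step can be performed within the ordered simplicial framework in which Corollary~\ref{cor7}.2 was proved, which is why the use of simplicial approximation to produce an ordered simplicial $\varphi$ is essential.
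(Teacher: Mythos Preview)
Your proposal is correct and follows essentially the same approach as the paper's own proof. The paper's argument is considerably terser—it simply says ``The above establishes that the pairing given by Definition~\ref{defn2} is well-defined. It is clearly linear in the second variable. Having established that Definition~\ref{defn2} determines a well-defined pairing, it follows immediately from Part 2 of Corollary~\ref{cor7} that Definition~\ref{defn1} agrees with Definition~\ref{defn2}''—but you have correctly unpacked what ``it follows immediately'' entails: one puts the representative into the special form $p'=\varphi^*u$ (which is exactly the content of the first sentence of Definition~\ref{defn1}), applies Corollary~\ref{cor7}(2) there, and then uses the already-established $G^1$-invariance of $\langle\cdot,\cdot\rangle_2$ to conclude that Definition~\ref{defn1} is both well-defined and equal to Definition~\ref{defn2}. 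Your explicit discussion of why bordism invariance follows from the vanishing lemma (via disjoint union and orientation reversal) is also just a spelling-out of what the paper leaves implicit.
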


\begin{proof}
The above establishes that the pairing given by Definition~\ref{defn2} is well-defined. It is clearly linear in the second variable. Having established that Defintion~\ref{defn2} determines a well-defined pairing, it follows immediately from Part 2 of Corollary~\ref{cor7} that Definition~\ref{defn1} agrees with Definition~\ref{defn2}.
\end{proof}

The pairing
$$G^1(X)\times \Omega_3^{spin}(X)\to \Ar/\Zee,$$
given in Corollary~\ref{rho1cor} is denoted $\langle (w,p,0),(M,f)\rangle$.
It is natural in $X$ and additive in the second variable and its adjoint is denoted
$$\rho^1\colon G^1(X)\to \DX.$$

\begin{claim}
The pairing produced by Corollary~\ref{rho1cor}  is additive in
$G^1(X)$, or equivalently, its adjoint  $\rho^1$ is a group homomorphism. 
\end{claim}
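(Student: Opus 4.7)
The cleanest route is to use Definition~\ref{defn2} of the pairing (which Corollary~\ref{rho1cor} tells us agrees with Definition~\ref{defn1}), since it turns additivity into a $4$-manifold integration computation rather than a statement about preimages of $S^2$.

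Fix $f\colon M\to X$ representing a Spin bordism class, choose an ordered triangulation of $M$, pick a bounding Spin $4$-manifold $W$ with $\partial W=M$, and pick cocycle extensions $P_1,P_2\in Z^2(W;\Zee/2)$ of $f^*p_1$ and $f^*p_2$ respectively. Then $P_1+P_2$ is a cocycle extension of $f^*(p_1+p_2)$, which is valid for evaluating the product element. Recalling that when $a=b=0$ the product of Definition~\ref{3.1} reduces to
$$(w_1,p_1,0)\centerdot (w_2,p_2,0)=\bigl(w_1+w_2+\tfrac{1}{2}p_1\cup_1p_2,\ p_1+p_2,\ 0\bigr),$$
I will compute $\langle(w_1,p_1,0)\centerdot(w_2,p_2,0),(M,f)\rangle_2$ by direct expansion and compare it to $\langle(w_1,p_1,0),(M,f)\rangle_2+\langle(w_2,p_2,0),(M,f)\rangle_2$.

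The key cochain identity, valid for $\Zee/2$ cocycles, is $P_1P_2+P_2P_1=d(P_1\cup_1 P_2)$. Expanding in $C^4(W;\Zee/2)$ gives
$$(P_1+P_2)^2=P_1^2+P_2^2+d(P_1\cup_1 P_2).$$
Applying $\tfrac12\int_W$ and using Stokes' theorem on the exact term (which contributes $\tfrac12\int_M f^*(p_1\cup_1 p_2)$) yields
$$\tfrac12\!\int_W (P_1+P_2)^2=\tfrac12\!\int_W P_1^2+\tfrac12\!\int_W P_2^2+\tfrac12\!\int_M f^*(p_1\cup_1 p_2).$$
Separately, the $w$-integral for the product contributes
$$\int_M f^*\!\bigl(w_1+w_2+\tfrac12 p_1\cup_1 p_2\bigr)=\int_M f^*w_1+\int_M f^*w_2+\tfrac12\!\int_M f^*(p_1\cup_1 p_2).$$

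Adding the two displays, the two copies of $\tfrac12\int_M f^*(p_1\cup_1 p_2)$ combine to $\int_M f^*(p_1\cup_1 p_2)$ viewed in $\Ar/\Zee$ via the coefficient map $(1/2)\colon \Zee/2\to\Ar/\Zee$. Since for any $x\in\Zee/2$ one has $(1/2)x+(1/2)x=x\in\Ar/\Zee=0$, this combined term vanishes, and the remaining pieces are exactly $\langle(w_1,p_1,0),(M,f)\rangle_2+\langle(w_2,p_2,0),(M,f)\rangle_2$.

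The only mild obstacle is bookkeeping with coefficients: the integrals of $P_i^2$ and of $p_1\cup_1p_2$ naturally live in $\Zee/2$ and are then pushed to $\Ar/\Zee$ by $(1/2)$, while $\int_M f^*w_i$ lives directly in $\Ar/\Zee$. Once this is kept straight, the cancellation of the two $(1/2)$ terms to zero in $\Ar/\Zee$ is the whole content, and additivity in the $G^1(X)$ variable follows. Naturality in $X$ is automatic from the definition, so this completes the claim that $\rho^1\colon G^1(X)\to\DX$ is a group homomorphism.
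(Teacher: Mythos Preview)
Your approach is essentially the same as the paper's: work with Definition~\ref{defn2}, expand $(P_1+P_2)^2 = P_1^2 + P_2^2 + d(P_1\cup_1 P_2)$, apply Stokes, and match against the product formula in $G^1$. Your cancellation bookkeeping is correct and more explicit than the paper's.

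There is one point you gloss over that the paper handles explicitly. You write ``pick a bounding Spin $4$-manifold $W$ with $\partial W=M$, and pick cocycle extensions $P_1,P_2\in Z^2(W;\Zee/2)$.'' For an arbitrary Spin filling $W$ of $M$ there is no reason both $f^*p_1$ and $f^*p_2$ extend as cocycles simultaneously; one needs a $W$ for which both cohomology classes lie in the image of restriction. The paper secures this by invoking $\Omega_3^{spin}(\Cee P^2\times \Cee P^2)=0$: the pair $(f^*p_1,f^*p_2)$ is classified by a map $M\to \Cee P^2\times\Cee P^2$, and vanishing of this bordism group guarantees a Spin $W$ with a map to $\Cee P^2\times\Cee P^2$ extending the one on $M$, hence cocycle extensions $P_1,P_2$ of both classes. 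You should insert this justification; otherwise the step ``pick cocycle extensions'' is not warranted.
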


\begin{proof}
We work with Definition~\ref{defn2}.
 Since $\Omega_3^{spin}(\Cee P^2\times \Cee P^2)=0$, given two cocycles $p,q$ on $M$ there is a Spin $4$-manifold $W$ bounding $M$ over which both cocycles extend, say to $P$ and $Q$. Then
$$\int_W(P+Q)^2= \int_WP^2+\int_WQ^2+\int_Wd(P\cup_1 Q).$$  From this, Stokes' Theorem, and the formula for addition in $G^1(X)$, which is
$$(w,p,0)(v,q,0)=(w+v+p\cup_1 q,p+q,0),$$
the $G^1(X)$ additivity of the pairing is clear.
\end{proof}

Let us consider continuity of the pairing, or equivalently, of $\rho^1$. 

\begin{claim}
The pairing given by Corollary~\ref{rho1cor} is continuous and its adjoint
$$\rho^1\colon G^1(X)\to \DX$$ 
is continuous when $\DX$ is given the Pontrjagin dual  topology.
\end{claim}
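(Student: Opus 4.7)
Since $\Omega_3^{spin}(X)$ is discrete and $\DX$ carries the pointwise convergence topology, continuity of $\rho^1$ is equivalent to continuity of the evaluation map $\mathrm{ev}_\omega\colon G^1(X)\to \Ar/\Zee$ at each $\omega=[f\colon M\to X]$. By Proposition~\ref{functorial}, $f^*\colon G^1(X)\to G^1(M)$ is a continuous homomorphism of compact groups, so it suffices to prove continuity of $\alpha\mapsto \langle \alpha, (M,\mathrm{Id})\rangle$ on $G^1(M)$.

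My plan is to reduce to the simplicial version of $G^1$, where the group of $2$-cocycles becomes a finite set, and then exploit Definition~\ref{defn2}. Fix an ordered triangulation $[M]$ of $M$. Proposition~\ref{5.1} gives a natural homeomorphism $G^1(M)\cong G^1_r([M])$, so I work with the latter. Since $[M]$ is finite, $Z^2_r([M];\Zee/2)$ is a finite discrete set, and the continuous projection $(w,p,0)\mapsto p$ shows $p$ is locally constant on $G^1_r([M])$. By Proposition~\ref{4.1}, $G^2_r([M]) \cong H^3([M];\Ar/\Zee) \cong \Ar/\Zee$ (using that Spin implies orientable), which is connected. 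Consequently the connected components of $G^1_r([M])$ are exactly the cosets of $G^2_r([M])$, finitely many in number, each homeomorphic to $\Ar/\Zee$.

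It remains to verify continuity on each component. Fix one on which $p\equiv p_0$, and choose once and for all a compact Spin $4$-manifold $W$ bounding $M$ together with a cocycle $P$ on $W$ with $P|_M=p_0$; existence was established just before Definition~\ref{defn1}, using $\Omega_3^{spin}(\Cee P^2)=0$. By Definition~\ref{defn2} and its well-definedness (Corollary~\ref{rho1cor}), any point of the component represented by $(w,p_0,0)$ evaluates as
$$\langle (w,p_0,0),(M,\mathrm{Id})\rangle = \frac{1}{2}\int_W P^2 + \int_M w.$$
The first summand is a fixed constant on the component, and $w\mapsto \int_M w$ is continuous from $\oo{C^3_r}([M];\Ar/\Zee)$ to $\Ar/\Zee$, since evaluation on the fundamental cycle of $[M]$ is a fixed continuous functional that descends through the quotient by coboundaries by Stokes. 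The main point demanding care is precisely this passage to the simplicial model: without it, $p$ varies continuously in the uncountable product topology and no single choice of extension data $(W,P)$ works near a given point. With the simplicial reduction, $(W,P)$ can be fixed uniformly across each component, turning the geometric formula into a manifestly continuous function of $w$.
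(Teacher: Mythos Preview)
Your argument is correct and follows essentially the same approach as the paper's proof: both exploit that on a finite triangulation of $M$ the group $Z^2(\Zee/2)$ is finite, so the $p$-coordinate is locally constant, after which a single choice of $(W,P)$ serves uniformly and the formula reduces to the manifestly continuous $w\mapsto \int_M w$. The only organizational difference is that you first pass along $f^*$ to $G^1(M)$ and then invoke Proposition~\ref{5.1} to work entirely in the simplicial group $G^1_r([M])$, whereas the paper stays in $G^1(X)$ with singular cochains and uses directly that the restriction of $f^*p'$ to simplicial cocycles on $[M]$ is locally constant in the singular topology; both routes implement the same reduction.
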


\begin{proof}
Since $\Omega_3^{spin}(X)$ is discrete, we need only consider continuity of the pairing in the first variable. Thus, we can fix $f\colon M\to X$. We work with reduced singular cochains.
Fix an element $(w,p,0)$ and  an ordered triangulation of $M$.
There is a neighborhood ${\mathcal U}$ of $(w,p,0)$ in ${\bf C}(X)$ such that for every $(w',p',0)\in {\mathcal U}$ the image of $f^*p'$ in reduced simplicial cocycles on $M$ is equal to the image of $f^*p$.
Choose a Spin $4$-manifold $W$ bounding $M$ over which the cohomology class of $p$ extends. Extend the ordered triangulation to $W$. Let $P$ be a reduced singular cocycle extending $f^*p$. Then for each $(w',p',0)\in{\mathcal U}$ we can choose a reduced singular cocycle $P'$ extending $f^*p'$ so that the image of $P'$ in reduced simplicial cocycles on $W$ is equal to that of $P$. It follows that
$$\int_W(P')^2=\int_WP^2,$$
and hence 
$$\langle(w',p',0),(M,f) =\frac{1}{2}\int_WP^2+\int_Mw'$$
which is a continuous function of $w'$.

Since the Pontrjagin dual topology is the compact-open topology the continuity of the pairing implies that  its adjoint 
$$\rho^1\colon G^1(X)\to {\rm Hom}(\Omega_3^{spin}(X),\Ar/\Zee)$$
is continuous.
\end{proof}

\subsection{Injectivity and Image of $\rho^1$}

Now we  show that $\rho^1$ is  injective and identify its image.
Recall from Section~\ref{sect2.1} that there is an increasing filtration $F_*(X)$ on $\Omega_3^{Spin}(X)$ where $F_i(X)$ is the image of
$\Omega_3^{Spin}(X^{(i)})\to \Omega_3^{Spin}(X)$, where $X^{(i)}$ is the $i$-skeleton of some CW decomposition of $X$. There is the dual filtration
$${\rm Hom}(\Omega_3^{spin}(X),\Ar/\Zee)\supset F^1(X)\supset F^2(X)\supset F^3(X)=0,$$
where $F^i(X)$ is the subgroup of homomorphisms vanishing on $F_i(X)$ for $1\le i\le 3$.

\begin{prop}\label{filterprop}
\begin{enumerate}
\item [(a)] The continuous homomorphism 
$$\rho^1\colon G^1(X)\to {\rm Hom}(\Omega_3^{Spin}(X),\Ar/\Zee)$$
is injective and its image is
$$F^1(X)={\rm Hom}(\Omega_3^{spin}(X)/F_1(X),\Ar/\Zee),$$
and hence  $\rho^1\colon G^1(X)\to F^1(X)$ is an isomorphism of compact abelian groups.
\item[(b)] The restriction of $\rho^1$ to $G^2(X)$ defines a homomorphism
$$G^2(X)\to  {\rm Hom}(\Omega_3^{Spin}(X)/F_2(X),\Ar/\Zee).$$
With the identifications of $G^2(X)$ with $H^3(X;\Ar/\Zee)$ given in Proposition~\ref{4.1} and of $\Omega_3^{Spin}(X)/F_2(X)$ with $H_3(X;\Zee)$ given in \S\ref{sect2.1}, this map is the natural homological pairing and hence is an isomorphism. 
\item[(c)] Under the identifications of $G^1(X)/G^2(X)$ with $SH^2(X;\Zee/2)$ given in Proposition~\ref{4.2} and of $F_2(X)/F_1(X)$ with $SH_2(X;\Zee/2)$ given in \S\ref{sect2.1} 
the pairing induced by $\rho^1$
$$G^1(X)/G^2(X)\to {\rm Hom}(F_2(X)/F_1(X),\Ar/\Zee)$$
 is the natural homological pairing and hence is an isomorphism.
\end{enumerate}
\end{prop}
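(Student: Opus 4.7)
The plan is to first identify the restricted pairing $\rho^1|_{G^2(X)}$ and the induced pairing on $G^1(X)/G^2(X)$ with the standard Kronecker pairings in cohomology, establishing (b) and (c) directly. Part (a) then follows from a five-lemma argument once one verifies that $\rho^1$ carries $G^j(X)$ into $F^j(X)$ for $j=1,2$. For this last point, by the naturality of $\rho^1$ (Corollary~\ref{rho1cor}) one may replace $f\colon M\to X$ representing an element of $F_j(X)$ with a factorization through the inclusion $i\colon X^{(j)}\hookrightarrow X$; then $\rho^1(w,p,0)(M,f)=\rho^1(i^*(w,p,0))(M,f')$, and in the reduced simplicial model $G^1(X^{(1)})=0$ (no non-degenerate $2$- or $3$-simplices) and $G^2(X^{(2)})\cong H^3(X^{(2)};\R/\Z)=0$. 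For (b), a triple $(w,0,0)$ has $p=0$, so applying Definition~\ref{defn2} with $P=0$ gives $\rho^1(w,0,0)(M,f)=\int_M f^*w=\langle [f^*w],[M]\rangle$. Under the isomorphisms $G^2(X)\cong H^3(X;\R/\Z)$ of Proposition~\ref{4.1} and $\Omega_3^{spin}(X)/F_2(X)\cong H_3(X;\Z)$ of \S\ref{sect2.1}, this is the Kronecker pairing; since $\R/\Z$ is injective, the universal coefficient theorem upgrades it to an isomorphism $H^3(X;\R/\Z)\cong\Hom(H_3(X;\Z),\R/\Z)=F^2(X)$.

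For (c), I would use Definition~\ref{defn1}: after passing to the quotient $G^1(X)/G^2(X)$ (where the ambiguity in $w$ is absorbed into the image of $G^2(X)\cong F^2(X)$) and restricting to $F_2(X)/F_1(X)$, the pairing simplifies to $(1/2)[\mathrm{Spin}(\varphi)]\in(1/2)\Z/\Z$, with $\varphi\colon M\to\C P^1$ chosen so that $f^*p=\varphi^*u$. The \S\ref{sect2.1} identification of $F_2(X)/F_1(X)$ with $SH_2(X;\Z/2)$ sends $[f\colon M\to X]$ to the class whose mod $2$ Kronecker pairing with $[p]$ equals the Spin bordism class $[\mathrm{Spin}(Z)]\in\Omega_1^{spin}=\Z/2$ of the framed circle $Z=\varphi^{-1}(z)$ Poincar\'e dual to $f^*p$. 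Composing with $(1/2)\colon\Z/2\hookrightarrow\R/\Z$ therefore identifies the induced map $G^1(X)/G^2(X)\to F^1(X)/F^2(X)$ with the Kronecker pairing $SH^2(X;\Z/2)\to\Hom(SH_2(X;\Z/2),\R/\Z)$, which is an isomorphism by the Pontrjagin duality between the Atiyah-Hirzebruch spectral sequence and its $\R/\Z$-dual summarized in \S\ref{sect2.1}.

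Part (a) then follows from the five-lemma applied to the commutative diagram whose rows are $0\to G^2(X)\to G^1(X)\to G^1(X)/G^2(X)\to 0$ and $0\to F^2(X)\to F^1(X)\to F^1(X)/F^2(X)\to 0$, with vertical arrows induced by $\rho^1$: the outer two verticals are isomorphisms by (b) and (c), so the middle one is an isomorphism as well. The main obstacle I anticipate is part (c), specifically matching our geometric invariant $[\mathrm{Spin}(\varphi)]$ with the image of $[f\colon M\to X]$ in $F_2(X)/F_1(X)\cong SH_2(X;\Z/2)$ under the standard Kronecker pairing with $[p]$. This requires unwinding the Atiyah-Hirzebruch filtration, lifting $f$ to the $2$-skeleton, identifying the $E^\infty_{2,1}$ representative via a relative cell decomposition of $(X^{(2)},X^{(1)})$, and checking the resulting class in $H_2(X;\Z/2)$ is indeed Poincar\'e dual to $f^*p$ in the $\Omega_1^{spin}$-twisted sense; this is routine but is the most delicate step of the argument.
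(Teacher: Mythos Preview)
Your overall strategy---check filtration compatibility, identify the pairings on associated gradeds with Kronecker pairings, then apply the five-lemma---is exactly the paper's, and your treatment of part (b) coincides with the paper's.

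The substantive difference is in part (c). You propose to directly unwind the Atiyah--Hirzebruch identification of $F_2(X)/F_1(X)$ with $SH_2(X;\ZZ/2)$ and match it against $[\mathrm{Spin}(\varphi)]$, which you correctly flag as the delicate step. The paper sidesteps this entirely by a naturality reduction: since the restriction $G^1(X)/G^2(X)\to G^1(X^{(2)})/G^2(X^{(2)})$ is injective, it suffices to verify (c) for $2$-dimensional $X$; then the surjection $G^1(X/X^{(1)})\to G^1(X)$ reduces to $X$ a wedge of $2$-spheres; additivity then reduces to the single case $X=S^2=\partial\Delta^3$, where $\Omega_3^{spin}(S^2)\cong\Omega_1^{spin}=\ZZ/2$ and the identification is immediate from Definition~\ref{defn1}. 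This buys you a one-line computation in place of the spectral-sequence bookkeeping you anticipate. Your approach would work, but the paper's reduction to the universal example is both shorter and more robust.

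One minor point: for filtration compatibility the paper argues with reduced singular cochains (so $f^*p$ and $f^*w$ literally vanish when $f$ factors through a low skeleton), whereas you pull back along $i\colon X^{(j)}\hookrightarrow X$ and observe $G^1(X^{(1)})=0$, $G^2(X^{(2)})=0$ in the reduced simplicial model. Both are fine; yours is slightly slicker.
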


\begin{proof}
Let $X$ be an ordered simplicial complex. We can assume any Spin bordism representative is  given by
an ordered simplicial map from an ordered triangulation of a Spin $3$-manifold.
We claim that the homomorphism defined by any $(w,p,0)$ vanishes on any Spin bordism element of the form
$f\colon M\to X^{(1)}\subset X$. The reason is that every element in $G^1(X)$ has a representative $(w,p,0)$ with $w$ and $p$ reduced singular cochains
of degrees $3$ and $2$, respectively.
Thus, if the image $f(M)$ lies in the $1$-skeleton,  then $f^*p$ and $f^*w$ vanish    and hence the evaluation of $(w,p,0)$ on $f\colon M\to X$ vanishes as well. This proves that $\rho^1(G^1(X))$ is contained in $F^1(X)$.

The subgroup $G^2(X)$ is identified with $H^3(X;\Ar/\Zee)$ by sending $(w,0,0)\mapsto [w]$ (note that for such elements $w$ is closed).
The evaluation of such an element on a Spin bordism class $f\colon M\to X$ is given by
$\int_Mf^*w.$ Clearly, then this map vanishes if $f(M)$ is contained in the $2$-skeleton of $X$.
Thus, $\rho^1$ induces a map
$$G^2(X)\to {\rm Hom}(\Omega_3^{spin}(X)/F_2(X)),\Ar/\Zee) =F^2(X).$$
The identification of the quotient $\Omega_3^{spin}(X)/F_2(X)$  with $H_3(X;\Zee)$ sends the class of 
$f\colon M\to X$ to $f_*([M])$.  Thus, under these identifications $G^2(X)=H^3(X;\Ar/\Zee)$ and $\Omega_3^{spin}(X)/F^2(X)=H_3(X;\Zee)$. The evaluation map is identified with the natural homological evaluation map
$$H^3(X;\Ar/\Zee)\to {\rm Hom}(H_3(X;\Zee),\Ar/\Zee),$$
which is an isomorphism by Pontrjagin duality. This proves Part (b).

It follows that $\rho^1$ induces a
 continuous homomorphism
 $$G^1(X)/G^2(X)\to F^1(X)/F^2(X).$$

Recall that 
$$F_2(X)/F_1(X)=SH_2(X;\Zee/2)=H_2(X;\Zee/2)/s(H_4(X;\Zee))$$
where the map $s$ is the composition 
$$H_4(X;\Zee)\to H_4(X;\Zee/2)\buildrel (Sq^2)^*\over \longrightarrow H_2(X;\Zee/2).$$
The quotient $G^1(X)/G^2(X)$ is identified with $SH^2(X;\Zee/2)$ by the map
$$(w,p,0)\mapsto [p]\in H^2(X;\Zee/2)$$
(since $dw+(1/2)p^2=0$, the cohomology class $[p]$ lies in $SH^2(X;\Zee/2)$.)

We claim that with these identifications the pairing
$$\bigl(G^1(X)/G^2(X)\bigr)\times \bigl(F_2(X)/F_1(X)\bigr)\to \Ar/\Zee$$
is the Pontrjagin pairing between homology and cohomology and hence its adjoint is an isomorphism of topological abelian groups.
Since the restriction map $G^1(X)/G^2(X)\to G^1(X^{(2)})/G^2(X^{(2)})$ is an injection, it sufficies to prove this result in the case when $X$ is
$2$-dimensional. In this case $X/X^{(1)}$ is a wedge of $2$-spheres and since the  map induced on $G^1$ by the projection $X\to X/X^{(1)}$ is surjective it suffices to prove the result for
$X$ a wedge of $2$-spheres. By additivity, it suffices to prove the result for $S^2=\partial \Delta^3$. 
Let $u$ be a reduced singular cocycle on $S^2$ whose cohomology class is the generator and let
$\varphi\colon M\to S^2$ represent a Spin bordism element. We can assume that $\varphi$ is an ordered simplicial map. Thus, according to Definition~\ref{defn1} the value of $(0,u,0)$ on $(M,\varphi)$ is the Spin bordism class represented by $\varphi$. Thus, the evalaution of $(0,u,0)$ is
the isomorphism between $\Omega_3^{spin}(S^2)$ and $\Omega_1^{spin}=\Zee/2$, showing that for $S^2$ the pairing is identified with the usual homological pairing.

This shows that the pairing $G^1(X)/G^2(X)\to F^1(X)/F^2(X)$ is the identity under the identifications of each of these groups $SH^2(X;\Zee/2)$.
This  proves Part (c).

This completes the proof that on the associated gradeds the induced maps are the identities under the identifications of the associated gradeds with (subgroups of) cohomology groups. Hence, $\rho^1\colon G^1(X)\to F^1(X)$ is an isomorphism of compact abelian groups. This proves Part (a) and completes the proof of the proposition. 
\end{proof}

\section{The Continuous Function $\lambda^1$}

\subsection{The Arf Invariant}\label{7.1}
The function $\lambda^1\colon H^1(X;\Zee/2)\to {\rm Hom}(\Omega_3^{Spin}(X),\Ar/\Zee)$ is based on the Arf invariant of certain quadratic forms.
Let $M$ be a closed $3$-dimensional Spin manifold and let $\alpha$ be an element of $H^1(M;\Zee/2)$. The cohomology class of $\alpha$ is equivalent to  a map $\alpha\colon M\to \Ar P^\infty$ well-defined up to homotopy.
Deform $\alpha\colon M\to \Ar P^\infty$ until it is transverse to a codimension-$1$ sub projective space. The preimage, denoted $\Sigma_\alpha$, is a surface in $M$. Let $\eta$ denote its normal bundle.
The Spin structure on $M$ determines a trivialization of $TM|_{\Sigma_\alpha}$ up to homotopy and hence a trivialization of $T\Sigma_\alpha\oplus \eta$ up to homotopy.
This trivialization allows us to define a quadratic function
$$q_\alpha\colon H^1(\Sigma_\alpha;\Zee/2)\to \frac{1}{2}\Zee/2\Zee.$$
 Let $\gamma$ be an embedded loop in $\Sigma_\alpha$. The value of $q_\alpha$ on the Poincar\'e dual $[\gamma]^*$ to the fundamental class of $\gamma$ is defined as follows. Fix a trivialization $T$ of the normal bundle of $\gamma\subset M$ with the property that the induced 
Spin structure on $\gamma$ is the trivial one. Then $q_\alpha([\gamma]^*)$ is defined to be the number of full right-hand twists with respect to $T$ of a normal  tube about $\gamma$ in $\Sigma_\alpha$.
This is an integer if $\Sigma_\alpha$ is orientable in a neighborhood of $\gamma$. Otherwise, it is a half-integer. Direct arguments show that this gives a well-defined quadratic function on $H^1(\Sigma_\alpha;\Zee/2)$, quadratic in the sense that
$$q_\alpha(x+y)=q_\alpha(x)+q_\alpha(y)+x\cdot y.$$
This quadratic function has an Arf invariant ${\rm Arf}(q_\alpha)$ in the $8^{th}$-roots of unity given by the formula
 $${\rm Arf}(q_\alpha) = \frac{1}{\sqrt{|H^1(\Sigma_\alpha;\Zee/2)|}}\sum_{x\in H^1(\Sigma_\alpha,\Zee/2)}e^{i\pi q_{\alpha}(x)}.$$
 A standard bordism argument shows that this Arf invariant only depends on $M$ and $\alpha$. We denote it ${\rm Arf}(M,\alpha)$.
 
 \begin{rem}
(a). There is another way to view this quadratic form which goes back to Brown \cite{Brown}, following a suggestion of Dennis Sullivan and related prior work of Tony Phillips on bordism of immersed surfaces. The restriction of the trivialization of $TM$ to a regular neighborhood $N_\gamma$ of $\gamma$ in $M$ induces an immersion of $N_\gamma$ into $\Ar^3$ well defined up to regular homotopy. We can assume that the immersion in fact is an embedding with the image of $\gamma$ being the unit circle. Then we count the number of right-hand twists of $\Sigma_\alpha\cap N_\gamma$ in $\Ar^3$.

(b). One can define a $Pin^-$ surface to be a surface $\Sigma$ together with a trivialization up to homotopy of 
$T\Sigma\oplus \eta$ for some line bundle $\eta$. Clearly, then $\Sigma_\alpha$ is a $Pin^-$ surface.
The same definition of the quadratic function on the first cohomology is valid for $Pin^-$-surfaces and
the Arf invariant of a $Pin^-$ surface determines an isomorphism between the $2$-dimensional $Pin^-$ bordism group and $\Zee/8$.
 \end{rem}

For $\alpha\in H^1(X;\Zee/2)$ we define the value of $\alpha$ on a Spin bordism representative $f\colon M\to X$ to be ${\rm Arf}(M,f^*\alpha)$, viewed  as an element of $\Ar/\Zee$ using the identification $\Ar/\Zee \to S^1$ given by $t\mapsto {\rm exp}(2\pi it)$.  If $f\colon M\to X$ is the Spin boundary of $F\colon W\to X$, then the surface $\Sigma_{f^*\alpha}$ bounds an embedded
 $3$-manifold $N\subset W$. Then the Spin structure on $W$ induces a $Pin^-$ structure on $N$, showing that the $Pin^-$ bordism class of $\Sigma_{f^*\alpha}$ 
 is zero, and hence that its ${\rm Arf}$ invariant vanishes. A more direct   way to think about this is that the quadratic form on $H^1(\Sigma_{f^*\alpha};\Zee/2)$ is trivial on any $1$-dimensional class in $\Sigma_{f^*\alpha}$ that extends over $N$. This produces a lagrangian subspace of $H^1(\Sigma_{f^*\alpha};\Zee/2)$ on which the quadratic form vanishes, and hence the Arf invariant of the quadratic form of $\Sigma_{f^*\alpha}$ is $0$ in $\Ar/\Zee$.
 
 The evaluation of $\alpha$ on Spin bordism representatives thus determines a function on Spin bordism classes
  $\Omega_3^{spin}(X)\to \Ar/\Zee$,
which is clearly linear and whose image lies the subgroup of order $8$.
 Its adjoint, as $\alpha$ varies, is a continuous function (but not a homomorphism) 
 $$ \lambda^1\colon H^1(X;\Zee/2)\to {\rm Hom}(\Omega_3^{spin}(X),\Ar/\Zee).$$
 It is a natural transformation between functors from the homotopy category to the category of compact topological spaces and continuous maps. 
 
 \begin{claim}\label{claim2.1}
 Under the natural identification $F_1(X)= H_1(X;\Zee/2)$ given in \S\ref{sect2.1}, the composition 
$$H^1(X;\Zee/2)\buildrel\lambda^1\over\longrightarrow {\rm Hom}(\Omega_3^{spin}(X),\Ar/\Zee)\to {\rm Hom}(F_1(X),\Ar/\Zee)$$
 is the natural continuous isomorphism
 $$H^1(X;\Zee/2)\to {\rm Hom}(H_1(X;\Zee/2),\Ar/\Zee)$$
 given by the adjoint of evaluation of cohomology class on homology classes.
 \end{claim}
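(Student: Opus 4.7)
The plan is to use naturality to reduce the claim to a single universal computation on $X = S^1$, and then verify that case directly.

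Both the map $\lambda^1$ and the natural pairing $H^1(X;\Zee/2) \to {\rm Hom}(H_1(X;\Zee/2), \Ar/\Zee)$ are natural in $X$, and the identification $F_1(X) \cong H_1(X;\Zee/2)$ from \S\ref{sect2.1} is as well. Every element of $H_1(X;\Zee/2)$ is a finite sum of classes $\gamma_*[S^1]$ for loops $\gamma\colon S^1 \to X$, and each such corresponds under the identification to $\gamma_*\xi_{S^1} \in F_1(X)$, where $\xi_{S^1}$ generates $F_1(S^1) = \Omega_3^{spin}(S^1) \cong \Zee/2$. Thus naturality applied to each $\gamma$ reduces the claim to the following verification on $X = S^1$: with $\alpha \in H^1(S^1;\Zee/2)$ the generator, the value $\lambda^1(\alpha)(\xi_{S^1})$ equals the image of $\langle \alpha, [S^1]\rangle = 1 \in \Zee/2$ under the standard inclusion $\Zee/2 \hookrightarrow \Ar/\Zee$.

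For that $S^1$ computation, the suspension isomorphism $\Omega_2^{spin} \cong \widetilde{\Omega}_3^{spin}(S^1)$ represents $\xi_{S^1}$ by the projection $f\colon T^2 \times S^1 \to S^1$, where $T^2$ carries the Lie-group (Arf-invariant-one) Spin structure. Now $f^*\alpha$ is Poincar\'e dual to $T^2\times\{pt\}$, so I take $\Sigma_{f^*\alpha} = T^2$ with its inherited Lie Spin structure and with the trivial $S^1$-direction as its normal framing. Inspecting the definition of $q_\alpha$, on a standard generating loop $\gamma \subset T^2$ the natural coordinate trivialization of the normal bundle in $T^2\times S^1$ induces the nonbounding Spin structure on $\gamma$, so the trivialization $T$ required in the definition differs from it by an odd twist, forcing $q_{f^*\alpha}([\gamma]^*)=1$; the remaining nonzero class $a+b$ satisfies $q_{f^*\alpha}(a+b) = q(a)+q(b)+a\cdot b = 1$. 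Hence
$${\rm Arf}(T^2\times S^1,\, f^*\alpha) = \frac{1}{2}\bigl(1 + e^{i\pi} + e^{i\pi} + e^{i\pi}\bigr) = -1 \in S^1,$$
which corresponds to $1/2 \in \Ar/\Zee$ under $t \mapsto e^{2\pi i t}$. This matches $\langle\alpha,[S^1]\rangle = 1 \in \Zee/2 = (1/2)\Zee/\Zee \subset \Ar/\Zee$.

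To conclude, I would note that although $\lambda^1$ is only a function on the full $\Omega_3^{spin}(X)$, its restriction to evaluation against $F_1(X)$ is a continuous group homomorphism in the $\alpha$-variable: for Spin bordism classes coming from loops in the $1$-skeleton, the Poincar\'e-dual surfaces $\Sigma_{f^*\alpha}$ split as disjoint unions compatibly with addition of $\alpha$, and ${\rm Arf}$ is additive on disjoint unions of $Pin^-$ surfaces. Once this additivity is in hand, the two natural homomorphisms $H^1(X;\Zee/2) \to {\rm Hom}(F_1(X),\Ar/\Zee)$ agree on the generators $\gamma_*[S^1]$ of $F_1(X)$ by the previous paragraph, so they coincide. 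The main obstacle is bookkeeping with conventions: confirming that the spectral-sequence identification $F_1(S^1) \to H_1(S^1;\Zee/2)$ sends $[T^2\times S^1 \to S^1]$ to $[S^1]$, and that the right-hand-twist convention used in defining $q_\alpha$ (consistent with Remark~\ref{3.3}) gives Arf invariant $+1$, i.e.\ $+1/2 \in \Ar/\Zee$, rather than $-1/2$.
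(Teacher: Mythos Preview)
Your argument is correct and gives a valid proof, but it proceeds along a different line from the paper's.

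The paper works directly with an arbitrary representative $f\colon M \to X^{(1)}$ of an element of $F_1(X)$. It picks a point $x_\tau$ in the interior of each $1$-simplex $\tau$, sets $\Sigma_\tau = f^{-1}(x_\tau)$, and observes that the AHSS identification sends $(M,f)$ to the cycle $\sum_\tau \tau\otimes[\Sigma_\tau]\in H_1(X;\Zee/2)$. The Poincar\'e dual of $f^*a$ is then $\coprod_\tau \langle a,\tau\rangle\,\Sigma_\tau$, and the proof finishes with the single general fact that for any \emph{Spin} surface $\Sigma$ one has ${\rm Arf}(\Sigma)=\tfrac{1}{2}[\Sigma]\in\Ar/\Zee$, where $[\Sigma]\in\Omega_2^{spin}=\Zee/2$ is the Spin bordism class. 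So the paper never chooses a specific $3$-manifold representative; the whole computation is packaged in this identity about Spin surfaces.

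Your approach instead uses naturality to reduce to the universal case $X=S^1$, identifies the generator of $F_1(S^1)=\Omega_3^{spin}(S^1)\cong\Zee/2$ concretely as $T^2\times S^1\to S^1$ with the Lie Spin structure on $T^2$, and then computes the quadratic form and Arf invariant of $T^2$ by hand. This is perfectly legitimate; it just trades the paper's one-line invocation of ``${\rm Arf}=\tfrac{1}{2}[\text{Spin bordism class}]$'' for a hands-on verification in a single model. Note, incidentally, that your final paragraph on additivity in $\alpha$ is not actually needed: for each fixed $\alpha$ both sides are already homomorphisms on $F_1(X)$, and your naturality reduction checks agreement on a generating set of $F_1(X)$ for every $\alpha$ simultaneously (since $\gamma^*\alpha$ is either $0$ or the generator of $H^1(S^1;\Zee/2)$). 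The bookkeeping you flag---that the AHSS identification sends $\xi_{S^1}$ to $[S^1]$---is immediate since both groups are $\Zee/2$ and the identification is an isomorphism.
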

 
 \begin{proof}
We can assume that $X$ is an ordered complex.
 An element of $F_1(X)$ is represented by an ordered simplicial map $f\colon M\to X^{(1)}$. For each $1$-simplex $\tau$ choose a point $x_\tau$ in the interior of $\tau$ and let $\Sigma_\tau$ be the pre-image of $x_\tau$. It inherits a Spin structure from that of $M$ and an orientation of $\tau$.  Denote by $[\Sigma_\tau]$ the resulting element in $\Omega_2^{spin}(pt)=\Zee/2$. The element
$$ \sum_\tau \tau\otimes [\Sigma_\tau]$$
is an ordered simplicial one-chain with $\Zee/2$ coefficients. It is a cycle and its homology class gives the identification of $F_1(X)$ with $H_1(X;\Zee/2)$.

Given a $1$-cocycle $a$, the surface $\coprod_\tau\langle a,\tau\rangle \Sigma_\tau$ is Poincar\'e dual to the cohomology class of $a$. Hence, the
value of $\lambda^1([a])$ on  $(M,f)$  is 
$$\sum_\tau\langle a,\tau\rangle {\rm Arf}(\Sigma_\tau).$$
Since $\Sigma_\tau$ is a Spin surface, ${\rm Arf}(\Sigma_\tau)$  is equal to $(1/2)[\Sigma_\tau]$, (i.e., to one-half the Spin borism class of $\Sigma_\tau$).
Thus, the value of $\lambda^1([a])$ on $(M,f)$ is
$$\frac{1}{2}\sum_\tau\langle a,\tau\rangle[\Sigma_\tau],$$
which is exactly the evaluation  of the cohomology class $[a]$ on the class in $H_1(X;\Zee/2)$ represented by  $(M,f)$ under the above identification of $F_1(X)$ with $H_1(X;\Zee/2)$ (together with the embedding $\Zee/2\subset \Ar/\Zee$ given by multiplication by $1/2$).

 \end{proof}

\subsection{An Example} 
 The tangent circle bundle of $S^2$, denoted  $T_SS^2$, is the boundary of of the tangent disk bundle $T_{D^2}S^2$. The latter has a natural orientation and consequently, because it is simply connected, there is a natural Spin structure. By the standard Spin structure on 
$T_SS^2$ we mean the one induced as the boundary of  $T_{D^2}S^2$. [Our convention is to orient the boundary  by taking
the outward pointing normal as the first vector.] 

\begin{claim}\label{RP3}
Let $\alpha \in H^1(T_SS^2;\Zee/2)$ be the non-zero class. Then
$${\rm Arf}(T_SS^2,\alpha)=1/8.$$
\end{claim}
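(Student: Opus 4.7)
The plan is to reduce the computation to explicit geometric data on a standard $\mathbb{R}P^2 \subset \mathbb{R}P^3$ and then apply the Arf formula from \S7.1.

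\textbf{Step 1: Geometric setup.} Identify $T_SS^2 \cong SO(3) \cong \mathbb{R}P^3$, so $H^1(T_SS^2;\Z/2) = \Z/2$ with generator $\alpha$. The standard Spin structure (coming from $T_{D^2}S^2$) is the one whose principal $\mathrm{Spin}(3) = S^3$-bundle is the universal double cover $S^3 \to \mathbb{R}P^3$. Take $\Sigma_\alpha$ to be a standard linear $\mathbb{R}P^2 \subset \mathbb{R}P^3$ and let $\gamma \subset \Sigma_\alpha$ be a standard $\mathbb{R}P^1$ representing the generator of $H_1(\Sigma_\alpha;\Z/2) = \Z/2$.

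\textbf{Step 2: Shape of the quadratic form.} Since the normal bundle of $\gamma$ in $\Sigma_\alpha$ is the M\"obius bundle, a tubular neighborhood of $\gamma$ in $\Sigma_\alpha$ is a M\"obius band, so $q_\alpha([\gamma]^*) \in (1/2)\Z/2\Z$ is a half-integer. The goal is to verify $q_\alpha([\gamma]^*) = 1/2$.

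\textbf{Step 3: Twist count.} Choose a trivialization $T$ of the normal bundle of $\gamma \subset \mathbb{R}P^3$ that induces the bounding Spin structure on $\gamma$. This can be made explicit: the loop $\gamma$ represents the nontrivial class in $\pi_1(\mathbb{R}P^3)$, and the bounding Spin structure on $\gamma$ is the one whose spinor frame fails to close up when lifted along $\gamma$ in $SO(3)$. With this $T$ fixed, count right-hand half-twists of a normal tube to $\gamma$ within $\Sigma_\alpha$, relative to $T$. Using the right-hand convention of Remark~\ref{3.3}, this count equals $+1/2$.

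\textbf{Step 4: Arf formula.} With $H^1(\Sigma_\alpha;\Z/2) = \Z/2$ and $q_\alpha(0) = 0$, $q_\alpha([\gamma]^*) = 1/2$, the Arf sum has two terms:
$${\rm Arf}(q_\alpha) = \frac{1}{\sqrt{2}}\bigl(1 + e^{i\pi/2}\bigr) = \frac{1+i}{\sqrt{2}} = e^{2\pi i/8},$$
so ${\rm Arf}(T_SS^2,\alpha) = 1/8 \in \R/\Z$ under the identification $t \mapsto e^{2\pi i t}$.

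The main obstacle is pinning down the sign in Step~3. The surface $\mathbb{R}P^2$ with either of its $\mathrm{Pin}^-$ structures generates $\Omega_2^{\mathrm{Pin}^-} \cong \Z/8$, so one knows a priori that ${\rm Arf}(q_\alpha) = \pm 1/8$. The delicate point is to verify that the $\mathrm{Pin}^-$ structure on $\Sigma_\alpha$ induced from the standard Spin structure on $T_SS^2$ is the one with Arf invariant $+1/8$ relative to the right-hand twist convention; as the authors note in Remark~\ref{3.3}, this is precisely the point that required the explicit triangulation-level computation of $T_SS^2$ to nail down the sign consistently with the choice of $+AB^2/4$ in Definition~\ref{3.1}.
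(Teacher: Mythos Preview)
Your approach differs from the paper's and leaves open precisely the point you flag. In Step~3 you assert that the twist count is $+1/2$ but do not verify it; your final paragraph concedes that pinning down this sign is the obstacle and appeals to the triangulation computation to resolve it.

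This misreads the paper's logic. The paper proves Claim~\ref{RP3} directly and independently via the signature formula (Equation~\ref{KirbyTaylor}):
\[
{\rm Arf}(M,\alpha)=\bigl(\sigma(W)-\sigma(W_\alpha)\bigr)/16\in \Ar/\Zee.
\]
Taking $W=T_{D^2}S^2$ gives $\sigma(W)=1$. Reversing orientation produces a Spin filling $W'$ whose boundary Spin structure is \emph{not} isomorphic to the original (since $\sigma(W')\not\equiv\sigma(W)\pmod{16}$), hence must be the twist by $\alpha$; thus $W_\alpha=W'$, $\sigma(W_\alpha)=-1$, and ${\rm Arf}(T_SS^2,\alpha)=2/16=1/8$. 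This determines the sign unambiguously with no twist-counting and no triangulation.

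The triangulation computation (Proposition~\ref{3.6}) plays a different role: it evaluates the element $((1/2)c^3+(1/4)C^3,\,c^2,\,0)\in G^1(T_SS^2)$ on the identity map, and \emph{together with} Claim~\ref{RP3} fixes the sign in Corollary~\ref{8.7}. So Claim~\ref{RP3} is logically prior to and an input for that sign determination; invoking the triangulation to prove Claim~\ref{RP3} would be circular.

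Your direct quadratic-form approach is in principle viable, but to complete it you would need an honest geometric argument that the $Pin^-$ structure on $\Ar P^2$ induced from the boundary Spin structure of $T_{D^2}S^2$ yields $q_\alpha([\gamma]^*)=+1/2$ (not $-1/2$) under the right-hand convention. The signature-formula route is cleaner exactly because it bypasses this delicate local check.
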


\begin{proof}
In \cite{Guillou, KirbyTaylor, Matsumoto} it is shown that if $M$ is a closed Spin $3$-manifold and $\alpha\in H^1(M;\Zee/2)$ then  
\begin{equation}\label{KirbyTaylor}{\rm Arf}(M,\alpha)=\bigl(\sigma(W)-\sigma(W_\alpha)\bigr)/16\in \Ar/\Zee.
\end{equation}
Here $\sigma$ is the signature, $W$ is a Spin $4$-manifold with boundary $M$ and $W_\alpha$ is a Spin $4$-manifold with boundary the Spin manifold obtained from $M$ by twisting the Spin structure by $\alpha$. [Recall that by Rochlin's theorem $\sigma(W)$ and $\sigma(W_a)$ are well-defined modulo $16$.] In case of the claim we have $W=T_{D^2}S^2$ and $\sigma(W)=1$. Let  $W'$ be $W$ with the opposite orientation. Its boundary determines a Spin structure on $T_SS^2$ which is not isomorphic to the original Spin structure
since $\sigma(W')\not= \sigma(W)\pmod {16}$. Thus the boundary of $W'$ is the twist of the given Spin structure on $T_SS^2$ by the non-zero class $\alpha\in H^1(T_SS^2;\Zee/2)$. That is to say $W'=W_\alpha$.
 Thus, $\sigma(W)-\sigma(W_\alpha)=2$ and ${\rm Arf}(T_SS^2,\alpha)=1/8$.
 \end{proof}

\section{Proof of the Compatibility}\label{sectproof}

Now to the last step: We establish that $\rho^1$ and $\lambda^1$ satisfy the condition given in Proposition~\ref{4.8}.
Let $f\colon M\to X$ represent an element of $\Omega_3^{spin}(X)$. In our situation the required compatibility is the following equation in $\Ar/\Zee$:
\begin{align*}
{\rm Arf}(M_\alpha)+{\rm Arf}(M_\beta)=
 {\rm Arf}(M_{\alpha+\beta})+\langle\bigl(\frac{1}{2}a(a\cup_1b)b+\frac{1}{4}AB^2,ab,0\bigr),(M,f)\rangle,
\end{align*}
where $a$ and $b$ are cocycles on $X$, $\alpha=[f^*a]$, and $\beta=[f^*b]$.
 According to Proposition~\ref{4.8}, if this is established then there is a continuous homomorphism 
 $$\rho\colon G(X)\to {\rm Hom}(\Omega_3^{spin}(X),\Ar/\Zee)$$
  defined by $\rho(w,p,a)=\rho^1(w,p,0)\lambda^1(a)$.
It will then be easy to show that 
$\rho$ is an isomorphism.

\subsection{Non-additivity of the Arf Invariant}

Given a space $X$ and cocycles $a,b\in Z^1(X;\Zee/2)$  define a function
$$\varphi(a,b)\in {\rm Hom}(\Omega^{spin}_3(X),\Ar/\Zee)$$
by 
$$\varphi(a,b)(M,f)={{\rm Arf}(M,f^*a)+{\rm Arf}(M,f^*b)-\rm Arf}(M,f^*(a+b)).$$
We have already observed that  $\varphi(a,b)$ depends only on the cohomology classes of $a$ and $b$, and hence it induces a natural transformation 
$$H^1(X;\Zee/2)\times H^1(X;\Zee/2)\to{\rm Hom}(\Omega^{spin}_3(X),\Ar/\Zee),$$
both functors being considered as functors on the homotopy category to the category of sets.

We shall not make use of the content of the following remark but we feel it helps clarify what is going on.
\begin{rem}\label{8.1}Let $a$ and $b$ be cocycles representing classes
$\alpha,\beta\in H^1(M;\Zee/2)$ and let $\Sigma_\alpha$ and $\Sigma_\beta$ be dual surfaces meeting transversely. Then
$$\varphi(a,b)(M,{\rm Id}))={\rm Arf}(M,\alpha)+{\rm Arf}(M,\beta)-{\rm Arf}(M,\alpha+\beta)=\frac{1}{2}q_\beta(\alpha|_{\Sigma_\beta})\in (1/4)\Zee/\Zee.$$
This can be proved with a delicate geometric argument or directly from Equation~\ref{KirbyTaylor} and the following elementary exercise. Let $M_\alpha$ denote the $3$-manifold $M$ with Spin structure twisted from the given Spin structure by $\alpha$. Then:
\begin{enumerate}
\item The quadratic form $q'$ for $\Sigma_\beta\subset M_\alpha$ is given by
$$q'(x)=q_\beta(x)+\langle x,\alpha\rangle,$$
and consequently
\item
$${\rm Arf}(q')={\rm Arf}(q_\beta)-\frac{1}{2}q_\beta(\alpha|_{\Sigma_\beta}).$$
\end{enumerate}
\end{rem}

Rather than use this description of $\varphi(a,b)$ we study the formal properties of the non-additivity and use these together with one explicit computation to
determine the non-additivity as the evaluation of a particular element of $G^1(X)$.

\begin{claim}\label{varphiclaim}
\begin{enumerate}
\item $\varphi(a,b)=0$ if either $a$ is exact or $b$ is exact. 
\item Let $c\in Z^1(T_SS^2;\Zee/2)$ represent the generator in cohomology. Then
$$\varphi(c,c)(T_SS^2,{\rm Id})=1/4.$$
\item If $f\colon M\to X^{(1)}$, then for any $a,b\in Z^1(X;\Zee/2)$ we have
$$\varphi(a,b)(M,f)=0.$$
\end{enumerate}
\end{claim}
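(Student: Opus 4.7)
The plan is to address the three parts separately, each resting on a result already in hand.

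For Part (1), the approach is to use the fact (already noted just before the claim) that $\varphi(a,b)$ depends only on the cohomology classes of $a$ and $b$. If $a$ is exact, then $[f^*a]=0\in H^1(M;\Zee/2)$, so a Poincar\'e dual surface $\Sigma_{f^*a}$ may be taken empty and ${\rm Arf}(M,f^*a)=0$. Moreover $[f^*(a+b)]=[f^*b]$, so ${\rm Arf}(M,f^*(a+b))={\rm Arf}(M,f^*b)$, and the three-term sum defining $\varphi(a,b)(M,f)$ collapses to $0+{\rm Arf}(M,f^*b)-{\rm Arf}(M,f^*b)=0$. The case of $b$ exact is symmetric.

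For Part (2), I would quote Claim~\ref{RP3} directly: with the standard Spin structure on $T_SS^2$ and $\alpha=[c]\in H^1(T_SS^2;\Zee/2)$ the nonzero class, one has ${\rm Arf}(T_SS^2,c)=1/8$. Since $c+c=0$ in $Z^1(T_SS^2;\Zee/2)$, the third term is ${\rm Arf}(T_SS^2,0)=0$, giving $\varphi(c,c)(T_SS^2,{\rm Id})=1/8+1/8-0=1/4$.

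For Part (3), the plan is to invoke Claim~\ref{claim2.1}, which identifies the restriction of $\lambda^1$ to bordism classes in $F_1(X)$ with the natural evaluation pairing $H^1(X;\Zee/2)\to {\rm Hom}(H_1(X;\Zee/2),\Ar/\Zee)$. Since an ordered simplicial map $f\colon M\to X^{(1)}$ represents an element of $F_1(X)$, and since the evaluation pairing of cohomology on homology is manifestly additive in the cohomology variable, we get ${\rm Arf}(M,f^*a)+{\rm Arf}(M,f^*b)={\rm Arf}(M,f^*(a+b))$, i.e., $\varphi(a,b)(M,f)=0$. No computation beyond reading off Claim~\ref{claim2.1} is required.

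None of the three parts poses a real obstacle given the established results. The substantive ingredient is really the explicit computation underlying Part (2), but that is already carried out in Claim~\ref{RP3} via the Kirby--Taylor signature formula; the harder conceptual work (that $\varphi$ descends to cohomology classes, and that $\lambda^1|_{F_1}$ is the natural homological pairing) is likewise already done.
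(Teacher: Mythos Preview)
Your proof is correct. Parts (1) and (2) match the paper's argument exactly: Part (1) is immediate from the fact that $\varphi$ depends only on cohomology classes, and Part (2) follows from Claim~\ref{RP3} together with ${\rm Arf}(T_SS^2,0)=0$.

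For Part (3) you take a slightly different route from the paper. The paper argues geometrically: since $f$ maps into the $1$-skeleton, the dual surfaces $\Sigma_{f^*a}$ and $\Sigma_{f^*b}$ can be taken as transverse preimages of interior points of distinct $1$-cells, hence disjoint, so $\Sigma_{f^*(a+b)}=\Sigma_{f^*a}\coprod\Sigma_{f^*b}$ and the Arf invariants add. You instead invoke Claim~\ref{claim2.1}, which packages this same geometric picture into the statement that $\lambda^1$ restricted to $F_1(X)$ is the linear homological pairing. Your approach is legitimate and arguably tidier, since Claim~\ref{claim2.1} is already established and its proof contains precisely the disjoint-surface computation the paper repeats here. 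The paper's direct argument has the minor advantage of being self-contained at this spot, but there is no real difference in content.
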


\begin{proof}
Item (1) is immediate from the definition. Item (2), follows from Claim~\ref{RP3} which says that ${\rm Arf}(T_SS^2,c)=1/8$, and the fact that since $[2c]=0$, ${\rm Arf}(T_SS^2,2c)=0$.
For Item (3), the surfaces $\Sigma_a$ and $\Sigma_b$ will be transverse pre-images of points in the interior of the $1$-cells of $X^{(1)}$, and hence can be taken to be disjoint. Thus, we can take $\Sigma_{a+b}=\Sigma_a\coprod \Sigma_b$, and Item (3) is clear.
\end{proof}

From Items (1) and (3) of Claim~\ref{varphiclaim}  we conclude:

\begin{cor}
 $\varphi$ induces a natural transformation
 $$H^1(X;\Zee/2)\times H^1(X;\Zee/2) \to G^1(X),$$
 where each of these functors is considered as a functor from the homotopy category to the category of sets.
\end{cor}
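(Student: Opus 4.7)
The plan is to reduce the statement entirely to the isomorphism $\rho^1\colon G^1(X)\xrightarrow{\sim} F^1(X)$ provided by Proposition~\ref{filterprop}(a). First I would show that $\varphi(a,b)$ already lies in the subgroup $F^1(X)\subset {\rm Hom}(\Omega^{spin}_3(X),\Ar/\Zee)$. By definition $F^1(X)$ consists of homomorphisms that vanish on $F_1(X)$, the image of $\Omega^{spin}_3(X^{(1)})\to\Omega^{spin}_3(X)$, and Item~(3) of Claim~\ref{varphiclaim} says exactly that $\varphi(a,b)(M,f)=0$ whenever $f$ factors through $X^{(1)}$. Combined with Item~(1) of the same claim, which ensures that $\varphi(a,b)$ depends only on $[a]$ and $[b]$, this produces a natural transformation
$$\bar\varphi\colon H^1(X;\Zee/2)\times H^1(X;\Zee/2)\to F^1(X).$$

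Next I would invoke Proposition~\ref{filterprop}(a), which says $\rho^1\colon G^1(X)\to F^1(X)$ is an isomorphism of compact abelian groups, natural in continuous maps $X\to Y$. The composition $(\rho^1)^{-1}\circ\bar\varphi$ is the desired map.

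To finish I would check naturality in the homotopy category. Naturality of $\varphi$ under continuous maps is immediate from its definition, since
$$\varphi(f^*a,f^*b)(M,g)=\varphi(a,b)(M,f\circ g),$$
and this descends to cohomology classes. The target $G^1$ is already a homotopy functor (by the argument proving Corollary~\ref{homfunct}, applied to the natural subfunctor cut out by the filtration of Proposition~\ref{gradeds}), and $H^1(-;\Zee/2)$ is of course a homotopy functor, so the composition descends to the homotopy category.

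I do not expect any real obstacle: both ingredients (the vanishing on the $1$-skeleton and the identification $G^1\cong F^1$) have already been established. The only small point is to note that we are lifting $\bar\varphi$ across a bijection to produce a set-valued transformation into $G^1(X)$ rather than into ${\rm Hom}(\Omega^{spin}_3(X),\Ar/\Zee)$; this is automatic once we know $\rho^1$ is an isomorphism onto $F^1(X)$.
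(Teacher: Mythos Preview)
Your proposal is correct and follows essentially the same approach as the paper: the paper simply states that the corollary follows from Items~(1) and~(3) of Claim~\ref{varphiclaim}, implicitly using the isomorphism $\rho^1\colon G^1(X)\to F^1(X)$ of Proposition~\ref{filterprop}(a) to identify $\varphi(a,b)$ with an element of $G^1(X)$. You have simply made explicit the details the paper leaves to the reader.
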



\begin{claim}
For $a,b\in Z^1(X;\Zee/2)$,
the image of $\varphi(a,b)$ under the projection $G^1(X)\to SH^2(X;\Zee/2)$ is the cohomology class of
$ab$.
\end{claim}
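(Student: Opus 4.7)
Post-composing the natural transformation $\varphi\colon H^1(-;\Zee/2)\times H^1(-;\Zee/2)\to G^1(-)$ of the preceding corollary with the quotient map $G^1\to G^1/G^2=SH^2(-;\Zee/2)$ of Proposition~\ref{4.2} gives a natural transformation of set-valued functors on the homotopy category,
\[
\Psi\colon H^1(-;\Zee/2)\times H^1(-;\Zee/2)\longrightarrow SH^2(-;\Zee/2).
\]
By Yoneda applied to the representable functor $H^1(-;\Zee/2)=[-\,,K(\Zee/2,1)]$, $\Psi$ is classified by a universal element
\[
\psi\in SH^2\bigl(K(\Zee/2,1)\times K(\Zee/2,1);\Zee/2\bigr)\subset H^2\bigl(K(\Zee/2,1)^2;\Zee/2\bigr)=\Zee/2\langle a^2, ab, b^2\rangle,
\]
where $a$ and $b$ denote the pullbacks of the fundamental one-dimensional class from the two factors.

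Claim~\ref{varphiclaim}(1) gives $\varphi(a,b)=0$ whenever either $a$ or $b$ is exact. Pulling $\psi$ back along either of the two inclusions $K(\Zee/2,1)\hookrightarrow K(\Zee/2,1)^2$ as a single factor must therefore produce $0$; these pullbacks preserve the $a^2$ (respectively $b^2$) summand and kill the $ab$ and $b^2$ (respectively $a^2$) summands, forcing the $a^2$ and $b^2$ coefficients of $\psi$ to vanish.  Hence $\psi=\lambda\cdot ab$ for a unique $\lambda\in\Zee/2$, and the claim reduces to showing $\lambda=1$.

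To evaluate $\lambda$ I would specialize to $X=T^2=S^1\times S^1$, with $a$ and $b$ the two generators of $H^1(T^2;\Zee/2)$, and apply $\Psi$ to the Spin bordism class $(T^3,\pi)\in\Omega^{spin}_3(T^2)$ in which $\pi$ is the projection onto the first two factors and $T^3$ is equipped with the non-bounding (Lie) Spin structure on all three circles.  Because $T^2$ is two-dimensional one has $F^2(T^2)=H^3(T^2;\Ar/\Zee)=0$, so $F^1(T^2)\cong SH^2(T^2;\Zee/2)=\Zee/2$ with no ambiguity. The fiber $\pi^{-1}(\text{pt})$ is a non-bounding Spin circle, so $(T^3,\pi)$ represents the nontrivial class in $SH_2(T^2;\Zee/2)=\Zee/2$; pairing with $ab\in SH^2$ gives $1\in\Zee/2$, which under the identification of Proposition~\ref{filterprop}(c) is embedded as $1/2\in\Ar/\Zee$.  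It therefore suffices to verify by direct computation that
\[
\varphi(a,b)(T^3,\pi)={\rm Arf}(T^3,\pi^*a)+{\rm Arf}(T^3,\pi^*b)-{\rm Arf}(T^3,\pi^*(a+b))=\tfrac{1}{2}\in\Ar/\Zee,
\]
which pins down $\lambda=1$; naturality of $\Psi$ then yields the claim over every $X$.

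\textbf{Main obstacle.} The only nontrivial step is this three-Arf computation on the Lie-Spin $T^3$. It can be handled either via the signature-difference formula~\eqref{KirbyTaylor} using an explicit Spin $4$-manifold bounding $T^3$ together with the three twists determined by $\pi^*a,\pi^*b,\pi^*(a+b)$, or geometrically via Remark~\ref{8.1} by identifying the quadratic refinement $q_{\pi^*b}$ on the Poincar\'e-dual surface $\Sigma_{\pi^*b}\cong T^2\subset T^3$ and evaluating it on $\pi^*a\vert_{\Sigma_{\pi^*b}}$; in either route the technical effort is confined to a single $3$-manifold calculation.
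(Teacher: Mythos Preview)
Your reduction to a universal class $\psi\in SH^2(K(\Zee/2,1)^2;\Zee/2)$ via Yoneda and your elimination of the $a^2$ and $b^2$ coefficients using Claim~\ref{varphiclaim}(1) are exactly what the paper does. The divergence is in how the remaining bit $\lambda\in\Zee/2$ is fixed. The paper restricts along the diagonal $K(\Zee/2,1)\to K(\Zee/2,1)^2$ and appeals to the already-established Claim~\ref{varphiclaim}(2), namely $\varphi(c,c)(T_SS^2,{\rm Id})=1/4$, to argue that the image of $\varphi(c,c)$ in $SH^2(T_SS^2;\Zee/2)$ is nonzero. You instead specialize to $X=T^2$ and test against the Lie-Spin $(T^3,\pi)$.

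Your choice is in one respect cleaner: because $\dim T^2=2$ you have $G^2(T^2)=H^3(T^2;\Ar/\Zee)=0$, so $G^1(T^2)\xrightarrow{\sim}SH^2(T^2;\Zee/2)=\Zee/2$ with no extension to untangle, and mere nonvanishing of $\varphi(a,b)(T^3,\pi)$ forces $\lambda=1$. The paper's route recycles a computation already on the shelf, but the passage ``since $G^2(T_SS^2)=\Zee/2$'' is delicate (in fact $G^2(T_SS^2)=H^3(\Ar P^3;\Ar/\Zee)\cong\Ar/\Zee$), and one must supply an extra word to rule out $\varphi(c,c)\in G^2$. Your $T^2$ argument sidesteps that issue entirely.

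The one thing you should not leave as a ``main obstacle'' is the Arf computation itself, which is short. Each of $\Sigma_{\pi^*a},\Sigma_{\pi^*b}$ is a coordinate sub-$2$-torus carrying the Lie Spin structure, hence has ${\rm Arf}=1/2$; a smooth dual to $\pi^*(a+b)$ is the subtorus $\{x_1+x_2=\text{const}\}$, whose tangent frame $((1,-1,0),(0,0,1))$ is constant in the ambient Lie framing, so it too is Lie-Spin with ${\rm Arf}=1/2$. Thus $\varphi(a,b)(T^3,\pi)=\tfrac12+\tfrac12-\tfrac12=\tfrac12$, exactly as required. Equivalently, via Remark~\ref{8.1}, the $x_1$-circle in $\Sigma_{\pi^*b}$ has $q_{\pi^*b}=1$ by a one-twist count against the Lie framing, giving $(1/2)\cdot 1=1/2$.
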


\begin{proof}
The image of $\varphi(a,b)$ in $SH^2(X;\Zee/2)$ is natural in $X$ and the classes $a,b$. Thus, it suffices to prove the result for $X=K(\Zee/2,1)\times K(\Zee/2,1)$ with $a,b$ representing the generators of the first cohomology groups of the factors. In this case $SH^2(X;\Zee/2)$ has a basis consisting of the cohomology classes of $a^2,ab,b^2$.
By Item (1) in Claim~\ref{varphiclaim} we see that the image of $\varphi(a,b)$ is either the class of $ab$ or is $0$.

By Item (2) of Claim~\ref{varphiclaim}, $\varphi(c,c)$ is an element of order $4$ in $G^1(T_SS^2)$. Since $G^2(T_SS^2)=\Zee/2$ if follows that
the image of $\varphi(c,c)$ in $SH^2(T_SS^2;\Zee/2)$ is non-trivial. Consequently, by naturality $\varphi(a,b)$ is non-zero, and hence is equal to the class of  $ab$.
\end{proof}

This is enough to determine $\varphi(a,b)$ up to two possibilities.

\begin{claim} For $a,b\in Z^1(X;\Zee/2)$ we have
$$\varphi(a,b)=(\frac{1}{2}a(a\cup_1 b)b\pm \frac{1}{4}AB^2,ab,0)\in G^1(X).$$
The correct sign is universal, that is to say, independent of $X$, $a$, and $b$.
\end{claim}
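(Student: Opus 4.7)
First I would verify that both candidate triples $T_\pm(a,b) := (\tfrac{1}{2}a(a\cup_1 b)b \pm \tfrac{1}{4}AB^2,\,ab,\,0)$ lie in $\Ker(\mathbf{D})$. Using $dA = 2A^2$, $dB = 2B^2$, and $d(a\cup_1 b) = ab+ba$, a direct computation gives $d(\tfrac{1}{2}a(a\cup_1 b)b) = \tfrac{1}{2}(a^2b^2 + abab)$ and $d(\tfrac{1}{4}AB^2) = \tfrac{1}{2}a^2b^2$ in $\R/\Z$ cochains, so $\mathbf{D}(T_\pm) = \tfrac{1}{2}(a^2b^2 + abab) \pm \tfrac{1}{2}a^2 b^2 + \tfrac{1}{2}(ab)^2 = 0$ in either case (using $2x = 0$ for $\Z/2$-valued classes viewed in $\R/\Z$ via $(1/2)$). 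Hence each $T_\pm(a,b)$ represents an element of $G^1(X)$.

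By the preceding claim, both $\varphi(a,b)$ and $T_\pm(a,b)$ project to $[ab] \in SH^2(X;\Z/2)$, so the differences $\Delta_\pm(a,b) := \varphi(a,b) - T_\pm(a,b)$ live in $G^2(X) \cong H^3(X;\R/\Z)$ by Proposition~\ref{4.1}. Since $\varphi$ depends only on cohomology classes and $T_\pm$ is a natural cochain construction, Lemma~\ref{key} together with Remark~\ref{Coops} shows that $\Delta_\pm$ is a natural cohomology operation $H^1(\Z/2) \times H^1(\Z/2) \to H^3(\R/\Z)$, hence classified by an element of the universal group $H^3(K(\Z/2,1)^2; \R/\Z)$. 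A direct calculation (K\"unneth plus the Bockstein from $0 \to \Z \to \R \to \R/\Z \to 0$) identifies this group as $(\Z/2)^3$ with basis $\{(1/2)[a^3],\,(1/2)[a^2 b],\,(1/2)[b^3]\}$. The crucial identity $(1/2)[a^2 b] = (1/2)[ab^2]$ holds because both classes share the same integral Bockstein, namely the unique nonzero element of $H^3(K(\Z/2,1)^2;\Z) \cong \Z/2$ (arising as a Tor term in K\"unneth), whose mod-$2$ reduction is $a^2 b + ab^2$.

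Next I would use Item~(1) of Claim~\ref{varphiclaim}, which gives $\varphi(a,b) = 0$ whenever $a$ or $b$ is exact, together with the obvious vanishing $T_\pm(0,b) = T_\pm(a,0) = 0$. This forces the classifying element of $\Delta_\pm$ to vanish when either universal class is zero, killing the $(1/2)[a^3]$ and $(1/2)[b^3]$ components; so each $\Delta_\pm$ lies in the $\Z/2$-span of $(1/2)[a^2 b]$. A short calculation in the abelian group $G^1(X)$ then shows that $T_+$ and $T_-$ differ by a class in $G^2(X) \cong H^3(X;\R/\Z)$ equal to $(1/2)[ab^2] = (1/2)[a^2 b]$ (the $\cup_1$ correction term $(1/2)(ab)\cup_1(ab)$ arising from the product is cohomologous to zero because its class equals $(1/2)[Sq^1(ab)] = (1/2)[a^2b] + (1/2)[ab^2] = 0$). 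Since this universal class is nonzero, $\Delta_+$ and $\Delta_-$ differ by it, so exactly one of them vanishes. This yields $\varphi(a,b) = T_\pm(a,b)$ in $G^1(X)$ for precisely one sign, and this sign is universal because it is determined by a single element of $H^3(K(\Z/2,1)^2; \R/\Z)$ independent of $X$, $a$, $b$.

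I expect the main obstacle to be the universal cohomology computation --- in particular the identification $(1/2)[a^2 b] = (1/2)[ab^2]$ and the careful tracking of the $(1/2)$ coefficient map $\Z/2 \to \R/\Z$ --- which requires delicate bookkeeping of Bocksteins and Tor terms. The present argument determines the claim only up to the sign ambiguity; the actual determination of the correct sign, asserted to be $+$ in Remark~\ref{3.3}, is a separate matter requiring the explicit calculation on the tangent circle bundle $T_S S^2$ together with Claim~\ref{RP3} and Item~(2) of Claim~\ref{varphiclaim}.
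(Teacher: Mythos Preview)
Your argument follows the paper's essentially verbatim: pass the difference $\Delta_\pm = \varphi - T_\pm$ into $G^2 \cong H^3(\R/\Z)$, reduce to the universal space $K(\Z/2,1)^2$ where $H^3(\R/\Z)\cong(\Z/2)^3$ with the identification $(1/2)[a^2b]=(1/2)[ab^2]$, and use vanishing at $a=0$ or $b=0$ to pin things down to the two possibilities differing by $(1/2)[ab^2]$. One small wrinkle: Lemma~\ref{key} governs cochain-to-cocycle operations and does not apply to $\Delta_\pm$ directly, since $\varphi$ is defined geometrically via Arf invariants rather than by a cochain formula; the paper instead cites Propositions~\ref{4.4} and~\ref{4.5} to see that $\xi = T_+$ depends only on the cohomology classes of $a,b$, whence the difference is a natural transformation of homotopy functors and is classified by the universal example.
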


\begin{proof}
Consider the natural transformation that associates to $a,b\in Z^1(X;\Zee/2)$ the element
$$\xi(a,b)=(\frac{1}{2}a(a\cup_1 b)b+\frac{1}{4}AB^2,ab,0)\in G^1(X).$$

By Propositions~\ref{4.4} and~\ref{4.5}, $\xi(a,b)$ depends only on the cohomology classes of $a$ and $b$. Clearly, $\xi(a,b)$ is contained in $G^1(X)$ and under the
map $G^1(X)\to SH^2(X;\Zee/2)$ $\xi(a,b)$ maps to the cohomology class of $ab$. 
Thus, the difference $\varphi-\xi$ induces a natural transformation 
$$H^1(X;\Zee/2)\times H^1(X;\Zee/2)\to G^2(X)=H^3(X;\Ar/\Zee)$$
vanishing when either $[a]=0$ or $[b]=0$.
By naturality, it suffices to consider $X=K(\Zee/2),1)\times K(\Zee/2,1)$. In this case $H^3(X;\Ar/\Zee)\equiv (\Zee/2)^3$ with $\Zee/2$-basis being the cohomology classes of $(1/2)[a^3],(1/2)[b^3],(1/2)[ab^2]=(1/2)[a^2b]$, [Since $Sq^1([ab])=[a^2b]+[ab^2]$, the elements $(1/2)[ab^2]$ and $(1/2)[a^2b]$ are equal.]
It follows that the difference $\varphi(a,b)-\xi(a,b)$ is either $0$ or $(1/2)[ab^2]$.
These lead to the two possibilities for $\varphi(a,b)$ listed in the claim. Naturality implies that the sign is universal.
\end{proof}

To determine $\varphi(a,b)$ we will make an explicit computation for $T_SS^2$.

\subsection{A Result for $T_SS^2=\Ar P^3$.}

\begin{prop}\label{3.6}
Equip $T_{S}S^2$ with the Spin structure it inherits as the boundary of the Spin manifold $T_{D}S^2$
associated with its natural orientation.
 Let $c\in Z^1(T_{S}S^2;\Zee/2)$ represent the non-trivial cohomology class. Then
  the class in $G^1(T_{S}S^2)$ represented by 
$$((1/2)c(c\cup_1c)c+(1/4)C^3,c^2,0)$$
evaluates to $1/4$ on the identity map considered as an element of $\Omega_3^{spin}(T_SS^2)$.
\end{prop}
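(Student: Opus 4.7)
The plan is to compute the pairing directly via Definition~\ref{defn1}, factoring $c^2$ through the tangent bundle projection $\pi\colon T_SS^2\to S^2=\Cee P^1$. Since the mod $2$ reduction of the Euler class of $TS^2$ vanishes, the Serre spectral sequence of $S^1\to T_SS^2\to S^2$ collapses mod $2$, so $\pi^*\colon H^2(S^2;\Zee/2)\to H^2(T_SS^2;\Zee/2)$ is an isomorphism. Choose a reduced cocycle $u$ on $S^2$ representing the generator and a $1$-cochain $t$ on $T_SS^2$ with $c^2=\pi^*u+dt$. Multiplying by $((1/2)t\,dt,dt,0)$ in $\mathbf{C}(T_SS^2)$ as in Definition~\ref{3.1} transforms our element into the equivalent representative $(w',\pi^*u,0)$ with
$$w'=\frac{1}{2}c(c\cup_1 c)c+\frac{1}{4}C^3+\frac{1}{2}\bigl(c^2\cup_1 dt+t\,dt\bigr),$$
and Definition~\ref{defn1} then evaluates the pairing as $\frac{1}{2}[Spin(\pi)]+\int_{T_SS^2}w'\in\Ar/\Zee$.

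Next I would pin down $[Spin(\pi)]\in\Omega^{spin}_1=\Zee/2$, the Spin bordism class of the fiber $S^1$ with the Spin structure inherited from $T_SS^2=\partial T_DS^2$ and the normal framing coming from $\pi$. Since the fiber bounds the fiber disk in $T_DS^2$, over which both the ambient Spin structure and the projection extend, a direct framing analysis pins down which of the two Spin structures on $S^1$ results; this determines whether $\frac{1}{2}[Spin(\pi)]$ equals $0$ or $1/2$.

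The main obstacle, and the heart of the proof, is the explicit cochain-level evaluation of $\int_{T_SS^2}w'$. I would fix a concrete ordered simplicial structure on $T_SS^2=\Ar P^3$ (for instance one coming from a $\Zee/2$-equivariant triangulation of $S^3$), a compatible ordered structure on $S^2$ rendering $\pi$ simplicial, and explicit cochain representatives of $c$, $u$, and $t$ with $c^2=\pi^*u+dt$ on the nose. The integrand then decomposes into four finite sums over the top-dimensional simplices --- from $(1/4)C^3$, $(1/2)c(c\cup_1 c)c$, $(1/2)c^2\cup_1 dt$, and $(1/2)t\,dt$ --- which must be combined and reduced mod $1$ to yield $1/4-\frac{1}{2}[Spin(\pi)]$. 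This is precisely the delicate triangulation computation the authors flag in Remark~\ref{3.3}: the sign of the resulting $\Ar/\Zee$ value is exactly what distinguishes the convention $+AB^2/4$ from $-AB^2/4$ in Definition~\ref{3.1}, so a general-nonsense shortcut appears unavailable and the computation must be carried out on a concrete model.
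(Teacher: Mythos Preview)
Your plan is essentially the paper's approach: pull back a generator of $H^2(S^2;\Zee/2)$ via the projection $\pi$, adjust $c^2$ to $\pi^*u$ by a cochain $t$, then apply Definition~\ref{defn1}. A few points where the paper sharpens the route and which you should incorporate.

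First, $c\cup_1 c = c$ for a $1$-cocycle, so $(1/2)c(c\cup_1 c)c = (1/2)c^3$; the paper works with $((1/2)c^3 + (1/4)C^3, c^2, 0)$ from the outset.

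Second, there is no need to hedge on $[Spin(\pi)]$: your own observation that the fiber disk in $T_DS^2$ bounds the fiber circle, with both the Spin structure and the projection extending, \emph{is} the argument that $[Spin(\pi)]=0$. The paper states this in one line.

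Third, and this is the real simplification you are missing, the paper chooses the triangulation and the cochain $t$ so that $tdt = pt = tp = 0$ on the nose. Then $d(p\cup_1 t) = p\cup_1 dt$, so the correction $(1/2)(c^2\cup_1 dt + tdt)$ is an exact $3$-cochain and contributes nothing to the integral. Consequently $(0,c^2,0) = (0,\pi^*u,0)$ in $G^1$, and the evaluation reduces to $\int_{T_SS^2}\bigl((1/2)c^3 + (1/4)C^3\bigr)$ alone. The paper's explicit triangulation (built from two solid tori $H^\pm\times S^1$ glued along the Euler-number-$2$ map, with a specific vertex ordering) then yields $\int C^3 = -1$ and $\int c^3 = 1$, giving $1/2 - 1/4 = 1/4$. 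Your four-term integrand is correct in principle, but arranging the vanishing of the correction terms is what makes the computation tractable by hand.
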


This result is proved by direct combinatorial computation in Section~\ref{A.3}. Notice $c^3$ evaluates to
$1$ on $[T_SS^2]$  so that changing the sign of the 
$(1/4)C^3$ term changes its value on the identity map.

\begin{cor}\label{8.7}
For $a,b\in Z^1(X;\Zee/2)$ we have
$$\varphi(a,b)=(\frac{1}{2}a(a\cup_1b)b+\frac{1}{4}AB^2,ab,0)\in G^1(X).$$
\end{cor}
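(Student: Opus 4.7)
The plan is to pin down the universal sign in
$$\varphi(a,b) = \bigl(\tfrac{1}{2}a(a\cup_1 b)b + \epsilon\cdot \tfrac{1}{4}AB^2,\ ab,\ 0\bigr) \in G^1(X), \qquad \epsilon \in \{\pm 1\},$$
 supplied by the previous claim, by evaluating both sides on a single carefully chosen test case and invoking naturality to promote the outcome to a universal statement. The natural test case is $(X,a,b) = (T_SS^2, c, c)$, where $c$ generates $H^1(T_SS^2;\Zee/2)$, because both sides are computable there.

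First I would evaluate the left side directly from the definition of $\varphi$. By Claim~\ref{RP3}, ${\rm Arf}(T_SS^2, c) = 1/8$, and since $2c$ is exact, ${\rm Arf}(T_SS^2, 2c) = 0$. Hence
$$\varphi(c,c)(T_SS^2,{\rm Id}) \;=\; 2\cdot\tfrac{1}{8} - 0 \;=\; \tfrac{1}{4} \in \Ar/\Zee.$$

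Next I would identify the right side at $(c,c)$ with the class whose evaluation is supplied by Proposition~\ref{3.6}. That proposition, proved in \S\ref{A.3} by a direct combinatorial computation on a specific triangulation, asserts that $\bigl(\tfrac12 c(c\cup_1 c)c + \tfrac14 C^3,\ c^2,\ 0\bigr)$ pairs to $1/4$ on the identity map. This matches the left side precisely when $\epsilon = +1$. To rule out $\epsilon = -1$, I would use the observation already recorded after Proposition~\ref{3.6}: flipping the sign of the $\tfrac14 C^3$ term alters the first coordinate by $-\tfrac12 C^3$, whose integral over $[T_SS^2]$ is $-\tfrac12 \langle c^3, [T_SS^2]\rangle = -\tfrac12$, so the $\epsilon=-1$ version would evaluate to $1/4 - 1/2 = -1/4 \ne 1/4$. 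Thus $\epsilon = +1$ universally, and the formula of the corollary follows.

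The main obstacle is entirely external to the corollary itself, namely Proposition~\ref{3.6}, which requires an explicit triangulation of $T_SS^2$ and a delicate combinatorial evaluation of the pairing on the identity class. Granted that calculation, together with the $\Ar P^3$ Arf computation of Claim~\ref{RP3}, the corollary reduces to the one-line naturality and sign-bookkeeping argument above; no further obstacle arises.
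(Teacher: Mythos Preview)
Your proposal is correct and follows essentially the same approach as the paper: reduce to the two universal possibilities via the previous claim, compute $\varphi(c,c)(T_SS^2,\mathrm{Id})=1/4$ from the Arf invariant, invoke Proposition~\ref{3.6} to see that the $+$ sign yields $1/4$, and observe that the $-$ sign would give a different value since $c^3$ evaluates nontrivially on $[T_SS^2]$. The only difference is cosmetic: you make the exclusion of $\epsilon=-1$ explicit by computing $-1/4$, whereas the paper simply remarks that changing the sign changes the evaluation.
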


\begin{proof} (Assuming Proposition~\ref{3.6})
According to Claim~\ref{varphiclaim} if $c\in H^1(T_SS^2;\Zee/2)$ is the non-trivial element we have
$\varphi(c,c)=1/4$. According to Proposition~\ref{3.6} the class
$\bigl((1/2)c(c\cup_1c)c+(1/4)C^3,c^2,0\bigr)$ also evaluates to $1/4$ on the identity map of $T_SS^2$ to itself.
Thus,  
$$\varphi(c,c)=\bigl((1/2)c(c\cup_1c)c+(1/4)C^3,c^2,0\bigr).$$
Of the two possibilities for the formula for $\varphi(a,b)$ only the stated one is consistent with this example.
\end{proof}

Assuming Proposition~\ref{3.6}, this completes the proof of the required compatibility.

\subsection{Isomorphism}

Since $\rho^1$ and $\lambda^1$ satisfy the condtion in Propostion~\ref{4.8} the function the resulting function $$\rho\colon G(X)\to {\rm Hom}(\Omega_3^{spin}(X),\Ar/\Zee)$$
defined by $\rho(w,p,a)=\rho^1(w,p,0)\lambda^1(a)$
is a continuous homomorphism. In Proposition~\ref{filterprop}, we established that
$$\rho^1\colon{G^1(X)}\to {\rm Hom}(\Omega^{spin}_3(X),\Ar/\Zee)$$ is injective with image $F^1(X)$.
Indeed, according to this proposition, $\rho^1$ is a filtered isomorphism in the sense that
 it induces an isomorphism on the associated gradeds
  $$G^2(X)\to F^2(X)$$
  and
  $$G^1(X)/G^2(X)\to F^1(X)/F^2(X),$$
Also, by Claim~\ref{claim2.1} the composition 
$$G(X)/G^1(X)\buildrel\lambda^1 \over\longrightarrow {\rm Hom}(\Omega^{spin}_3(X),\Ar/\Zee)\to {\rm Hom}(F_1(X),\Ar/\Zee)=F(X)/F^1(X)$$
is an isomorphism and indeed under the natural identification of the range with ${\rm Hom}(H_1(X;\Zee/2),\Ar/\Zee)$ it is the adjoint of the homological pairing.
  It follows that $\rho\colon G(X)\to {\rm Hom}(\Omega_3^{spin}(X),\Ar/\Zee)$ is compatible with the filtrations, i.e. $\rho(G^i(X))\subset F^i(X)$,
 and $\rho$ is a filtered group isomorphism. Since the domain and range are compact topological (Hausdorff) groups and $\rho$ is continuous, it follows that $\rho$ is an isomorphism of filtered topological groups, and hence that $\rho$ is a natural transformation between functors from the homotopy category to the category of compact topological abelian groups and continuous homomorphisms. 
  
  Assuming Proposition~\ref{3.6}, this completes the proof of Theorem~\ref{1.2}.
  
\section{Some Results About Spin Structures} 
\subsection{An Action of $H^1(\ZZ/2)$ on Spin Bordism Elements}
Let $b \in Z^1(X, \ZZ/2)$ act on $\Omega_3^{spin}(X)$ by $(M \xrightarrow{f} X)\mapsto (M_{\beta} \xrightarrow {f} X)$, where $\beta = [f^*(b)] \in H^1(M, \ZZ/2)$, and where $M_\beta$ means the  Spin structure on $M$ changed by $\beta$.  The action by $\beta$ defines an order 2 linear automorphism of $\Omega_3^{spin}(X)$. Dualizing gives an order 2 automorphism $\chi = \chi_b: G(X) \to G(X)$ via $\langle\chi(g),\ (M \xrightarrow{f} X) \rangle\ =\ \langle g,\ (M_{\beta} \xrightarrow{f} X) \rangle$.\\

We pose the question, what is the formula for $\chi_b(w,p,a)$?\\
\begin{prop}\label{deltaspin} 
$$\chi_b(w,p,a) =(w,p,a)((1/2)pb + (1/2)a(a\cup_1b)b - (1/4)AB^2,\ ab,\ 0).$$
Multiplying out the right hand side in $G(X)$ gives 
\begin{align*}
\lefteqn{\chi_b (w,p,a) =} \\ 
& &(w+ (1/2)pb + (1/2)p \cup_1 ab + (1/2)a(a\cup_1 b)b - (1/4)AB^2,\ p + ab,\ a).
\end{align*}
\end{prop}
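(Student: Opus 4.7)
The plan is to reduce the identity to an equality of $\RR/\ZZ$-valued pairings and verify it piece by piece. Since $\rho\colon G(X)\to \mathrm{Hom}(\Omega_3^{spin}(X),\RR/\ZZ)$ is the isomorphism established in Section~\ref{sectproof}, and $\chi_b$ is by definition the $\rho$-adjoint of the Spin-twist involution on $\Omega_3^{spin}(X)$, it suffices to show that for every $f\colon M\to X$,
\[\langle (w,p,a),(M_\beta,f)\rangle-\langle (w,p,a),(M,f)\rangle=\langle D_b,(M,f)\rangle,\]
where $\beta=[f^*b]$ and $D_b=((1/2)pb+(1/2)a(a\cup_1 b)b-(1/4)AB^2,\,ab,\,0)$.

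Using the pairing formula from Section~\ref{sectstatement}, I would split the change on the left into Spin-independent and Spin-sensitive pieces. The integrals $\int_M[f^*w+(1/2)(f^*p\cup_1 dt+tdt)]$ are manifestly Spin-independent and drop out. For the $(1/2)[\mathrm{Spin}(Z)]$ piece, the $1$-manifold $Z$ (Poincar\'e dual to $f^*p$ with framing from $\varphi$) inherits its Spin structure from $M$, and twisting $M$ by $\beta$ twists the Spin structure on each component $Z_i$ by $\beta|_{Z_i}$. Consequently $[\mathrm{Spin}(Z)]\in\ZZ/2$ shifts by $\langle\beta,[Z]\rangle=\int_M f^*(bp)\pmod 2$; since $bp\equiv pb$ modulo $d(p\cup_1 b)$, this contributes $\langle((1/2)pb,0,0),(M,f)\rangle$ to the pairing change. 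For the $\mathrm{Arf}$ term, Equation~\ref{KirbyTaylor} expresses each $\mathrm{Arf}$ invariant as a signature difference mod $16$; a brief rearrangement of $\sigma(W),\sigma(W_\alpha),\sigma(W_\beta),\sigma(W_{\alpha+\beta})$ yields
\[\mathrm{Arf}(M_\beta,\alpha)-\mathrm{Arf}(M,\alpha)=-\bigl(\mathrm{Arf}(M,\alpha)+\mathrm{Arf}(M,\beta)-\mathrm{Arf}(M,\alpha+\beta)\bigr)=-\varphi(a,b)(M,\mathrm{Id}).\]
By Corollary~\ref{8.7}, $\varphi(a,b)$ is represented in $G^1(X)$ by $((1/2)a(a\cup_1 b)b+(1/4)AB^2,\,ab,\,0)$; its inverse in $G^1$, using $(w,p,0)^{-1}=(-w,p,0)$ together with $-(1/2)=(1/2)$ in $\RR/\ZZ$, is exactly $((1/2)a(a\cup_1 b)b-(1/4)AB^2,\,ab,\,0)$.

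Combining via additivity of the pairing, the total change equals $\langle E_1\centerdot E_2,(M,f)\rangle$ with $E_1=((1/2)pb,0,0)$ and $E_2=((1/2)a(a\cup_1 b)b-(1/4)AB^2,\,ab,\,0)$; a direct application of Definition~\ref{3.1} (both have trivial third coordinate, and $E_1$ has trivial second coordinate, so only the first coordinates add) gives $E_1\centerdot E_2=D_b$, completing the pairing identity. The multiplied-out form for $\chi_b(w,p,a)=(w,p,a)\centerdot D_b$ then follows from one more application of Definition~\ref{3.1}: the only nontrivial cup-one contribution is the $(1/2)p\cup_1(ab)$ term, and the special-integral-lift correction $(1/4)A\cdot 0^{2}$ vanishes because $D_b$ has third coordinate $0$.

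\textbf{Main obstacle.} The most delicate step is the Arf identity, where the sign of the $(1/4)AB^2$ correction in $D_b$ must match the sign produced by inverting $\varphi(a,b)$ in $G^1$. Both signs ultimately trace back to the universal $T_SS^2$ computation of Proposition~\ref{3.6} and the right-hand-twist convention of Section~\ref{7.1}; all other steps reduce to routine cochain bookkeeping via Definition~\ref{3.1}.
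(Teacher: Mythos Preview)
Your proposal is correct and follows essentially the same approach as the paper's proof. Both arguments reduce to the pairing identity, split the change into the Spin-framing piece (yielding $(1/2)pb$) and the Arf piece, and then invoke Corollary~\ref{8.7} to identify the Arf change with the negative of $((1/2)a(a\cup_1b)b+(1/4)AB^2,\ ab,\ 0)$; the only minor difference is that for the Arf step the paper quotes the quadratic-form identity of Remark~\ref{8.1} directly (namely $-\tfrac{1}{2}q_\beta(\alpha|_{\Sigma_\beta})$), whereas you establish the equivalent relation $\mathrm{Arf}(M_\beta,\alpha)-\mathrm{Arf}(M,\alpha)=-\varphi(a,b)$ by the signature rearrangement via Equation~\ref{KirbyTaylor}, which is in fact one of the two proofs of Remark~\ref{8.1} that the paper itself suggests.
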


\begin{proof}  Fixing $(w,p,a)$, write  $\chi_b(w,p,a) = (w,p,a)g'$ for some $g' \in G$.  Then we must have $$\langle g', (M \xrightarrow{f} X)\rangle = \langle(w,p,a), (M_{\beta} \xrightarrow{f} X) \rangle - \langle(w,p,a), (M \xrightarrow{f} X) \rangle.$$

Without loss of generality, we can assume $(f^*w,f^*p) \in G^1(M)$ is replaced by $(w_u, p_u)$ where $p_u = z^*(u)$ for some map $z\colon M \to S^2$.  Then
$$\langle(w,p,a), (M \xrightarrow{f} X) \rangle\ =\ {\rm Arf}(M, \Sigma_{\alpha})+ (1/2)[Spin(z, M)] +\int_{[M]} w_u  $$
and $$\langle(w,p,a), (M_{\beta} \xrightarrow{f} X) \rangle\ =\ {\rm Arf}(M_{\beta}, \Sigma_a)  + (1/2)[Spin(z, M_\beta)] + \int_{[M]} w_u.$$

The Spin term changes by $(1/2)\int_{[M]} f^*(pb)  $, which accounts for the $(1/2)pb$ term in our answer.  Namely, interpret $z$ as a framed 1-manifold $Z \times D^2 \subset M$.  Change of Spin structure by $\beta$ means change a framing of the tangent bundle of $M - (point)$ by $M \xrightarrow{\beta} \RR P(3) \to SO(3)$.  Restricting to $Z \times D^2$, the framing changes by 
$$\langle\beta, [Z]\rangle = \langle pb, [M]\rangle .$$.

The Arf term changes by $${\rm Arf}(M_{\beta}, \Sigma_\alpha) - {\rm Arf}(M, \Sigma_\alpha) =  -\frac{1}{2}\ q_\beta(\alpha|_{\Sigma_{\beta}}).$$  Here $\alpha = [f^*(a)] $ and the value of the quadratic form $q$ is interpreted in $\Ar/\ZZ$.  But we know from Corollary~\ref{8.7} and Remark~\ref{8.1} that
$$\frac{1}{2}q_{\beta}(\alpha\Sigma_\beta) = \langle((1/2)a(a\cup_1b)b + (1/4)AB^2, ab, 0),\ (M \xrightarrow{f} X)\rangle \in \Ar/ \ZZ.$$
To change the sign of $q$ just replace $(1/4)AB^2$ by $(-1/4)AB^2$.  Then one sees that $$g' = ((1/2)pb + (1/2)a(a\cup_1b)b - (1/4)AB^2,\ ab,\ 0)$$ and the formula is proved.\end{proof}

\begin{rem}The function $\chi \colon H^1(X, \ZZ/2\ZZ) \to Aut(G(X))$ given above is a group homomorphism.  Both this fact and the fact that $\chi(b) = \chi_b$ is indeed an automorphism of $G(X)$ follow trivially from the interpretations on the Spin bordism side.\end{rem}

\begin{rem}The operation $(M \xrightarrow{f} X)\mapsto (M_{\beta} \xrightarrow {f} X)$ makes sense even if $\beta \in H^1(M, \ZZ/2)$ doesn't come from $X$.  If $(w,p,a) \in G(X)$ then one gets a formula for the difference$$\langle(w,p,a), (M_{\beta} \xrightarrow{f} X) \rangle - \langle(w,p,a), (M  \xrightarrow{f} X) \rangle \in \RR /\ZZ$$  as the evaluation of the element $$([(1/2)(f^*p)b + (1/2)f^*a(f^*a \cup_1 b)b - (1/4)(f^*A)B^2],\  (f^*a)b,\  0) \in G(M)$$ on $M \xrightarrow{Id} M$. Just replace $M \xrightarrow{f} X$ by $M \xrightarrow{Id} M$ and work in $G(M)$ in the proposition.\end{rem}

\subsection{Characterizing Spin Structures on 3-Manifolds}

\begin{prop}Let $M$ be an oriented connected 3-manifold. Choose an ordered simplicial structure $[M]$.   Spin structures on $M$ are in canonical bijective correspondence with functions $Q\colon Z^2([M], \ZZ/2) \to \Ar/\Zee$ that satisfy 
\begin{enumerate}
 \item $Q(dt) = (1/2)\int_{[M]}t dt$ \ \ \ and
\item $Q(p+q) = Q(p) + Q(q) + (1/2)\int_{[M]} p \cup_1 q.$
\end{enumerate}
\end{prop}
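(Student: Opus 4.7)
My plan is to define a canonical map from Spin structures to functions $Q$ using the pairing of Corollary~\ref{rho1cor}, to verify properties (1) and (2) from the cochain arithmetic of $G^1$, and to establish bijectivity by matching two natural $H^1(M;\Zee/2)$-torsor structures.

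For a Spin structure $s$ on $M$, set $Q_s(p) = \langle (0,p,0),\, (M, s, \mathrm{Id})\rangle \in \Ar/\Zee$, using the pairing $G^1(M) \times \Omega_3^{spin}(M) \to \Ar/\Zee$ of Corollary~\ref{rho1cor}. Here $(0,p,0)$ lies in $\mathrm{Ker}(\mathbf{D})$ automatically because $C^4([M]; \Ar/\Zee) = 0$. Properties (1) and (2) now follow from additivity of the pairing in the first variable together with the product and relation formulas in $G^1$. For (2): the identity $(0,p,0)\centerdot(0,q,0) = ((1/2)\, p \cup_1 q,\, p+q,\, 0)$ combined with additivity yields $Q_s(p) + Q_s(q) = Q_s(p+q) + (1/2)\int_{[M]} p \cup_1 q$. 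For (1): since $\mathbf{D}'(t,0) = ((1/2)\, t\, dt,\, dt,\, 0)$ is trivial in $G$, we have $(0, dt, 0) \equiv (-(1/2)\, t\, dt,\, 0,\, 0) \in G^2(M)$, and evaluating the latter via Definition~\ref{defn2} (with $W$ bounding $M$ and the zero cocycle extending $0$) gives $Q_s(dt) = -(1/2)\int_{[M]} t\, dt = (1/2)\int_{[M]} t\, dt$ in $\Ar/\Zee$.

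The essential step is the equivariance of $s \mapsto Q_s$ under change of Spin structure. For $b \in Z^1(M; \Zee/2)$ with class $\beta$, specializing Proposition~\ref{deltaspin} to $w = a = 0$ gives $\chi_b(0,p,0) = ((1/2)\, pb,\, p,\, 0)$, so
$$Q_{s_\beta}(p) - Q_s(p) = \langle ((1/2)\, pb,\, p,\, 0),\, (M, s, \mathrm{Id})\rangle - Q_s(p) = (1/2)\int_{[M]} p \cup b \quad \in \Ar/\Zee.$$
This identifies the difference as the pairing against the Poincar\'e dual of $[b]$.

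For bijectivity, let $\mathcal{Q}$ denote the set of functions $Q$ satisfying (1) and (2). For any $Q, Q' \in \mathcal{Q}$, the difference $Q - Q'$ is additive by (2) and vanishes on all coboundaries by (1), hence factors through a homomorphism $H^2(M;\Zee/2) \to \Ar/\Zee$, which necessarily lands in $(1/2)\Zee/\Zee$. Thus $\mathcal{Q}$ is a torsor over $\mathrm{Hom}(H^2(M;\Zee/2),\, \Zee/2)$, which Poincar\'e duality for closed oriented connected $3$-manifolds identifies with $H^1(M;\Zee/2)$ via $[b] \mapsto (1/2)\langle \cdot \cup b,\, [M]\rangle$. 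The set of Spin structures on $M$ is also an $H^1(M;\Zee/2)$-torsor, and the equivariance formula above shows $s \mapsto Q_s$ is torsor-equivariant, hence a bijection. The main obstacle is the equivariance computation, since once Proposition~\ref{deltaspin} is applied correctly the identification with Poincar\'e duality is automatic and the rest is formal.
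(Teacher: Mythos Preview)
Your proof is correct and follows essentially the same approach as the paper's: define $Q_s$ via the pairing on $G^1(M)$, derive conditions (1) and (2) from the product formula and the relation $\mathbf{D}'(t,0)$, invoke Proposition~\ref{deltaspin} (with $w=a=0$) to get $Q_{s_\beta}(p)-Q_s(p)=(1/2)\int_{[M]}pb$, and conclude by matching the $H^1(M;\ZZ/2)$-torsor structures via Poincar\'e duality. The organization differs only cosmetically---the paper first characterizes all homomorphisms $G^1\to\RR/\ZZ$ as $I(w)+Q(p)$ and then identifies which ones arise from Spin structures, while you go directly to the torsor comparison---but the substance is the same.
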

\begin{proof} Note that Condition 2 implies that $Q(0)=0$ and, since $p\cup_1p=0$,  $2Q(p)=0$ all $p\in Z^2([M],\Zee/2)$. We exploit the duality between $G_r([M])$ and $\Omega^{spin}_3(M)$.  A Spin structure on $M$ defines a homomorphism  $G = G_r([M]) \to \R / \Z$, by evaluating $(w,p,a)$ on the identity map $M = M$.  Of course we have formulas for this evaluation. But we are more interested in comparing with other  evaluations of $(w,p,a)$ on maps $M' \to M$, where $M'$ is another Spin structure on $M$.  We will only need to consider the evaluations of elements $(w,p,0) \in G^1 = G^1([M])$.  \\

Since $M$ is an oriented connected 3-manifold, the group $G^1=G^1_{r}([M])$ is simple to describe.  All degree 3 cochains are cocycles, and $H^3([M]; \R / \Z) = \R / \Z$.
The group $G^1$ is the quotient  $$\R / \Z \times Z^2([M]; \Z /2) / \{((1/2)tdt, dt)\},$$ where the group product is $(w,p)(v,q) = (w+v+(1/2)p\cup_1q, p+q)$.  Therefore homomorphisms $G^1 \to \R / \Z$ are all of the form $I(w) + Q(p)$, where $I$ is continuous and linear and $Q$ satisfies the two conditions of the proposition. So $I$ is just multiplication by some integer.   If one $Q$ is given, then all other $Q$ are obtained by adding a linear function of $p$ that vanishes on boundaries $dt$.  But this just means a linear function  $H^2(M; \Z /2) \to \Z /2$, which is the same thing as multiplying $p$ by a cohomology class $b \in H^1([M]; \Z/2)$.\\

The evaluation of $(w,p,0)$ on the identity $M = M$ is given by $I_0 + Q_0$, where $I_0(w) = \int_{[M]} w$ and $Q_0(p) = (1/2)\int_{[W]} P^2$.  Here, $[W]$ is an ordered simplicial 4-manifold with boundary $[M]$ and $P$ is a cocycle lift of $p$ to $[W]$. Or, write $p + dt = p_u$ where $p_u = z^*(u)$ for some ordered simplicial map $z\colon M \to S^2$.  Then $Q_0(p) = (1/2)[Spin(z)] + (1/2)\int_{[M]} (tdt + p\cup_1 dt)$.\\ 

Consider the evaluations of $(w,p,0)$ on $M' = M$, where $M'$ is another Spin structure. Since the map $M' = M$ is just the identity, $I(w) = \int_{[M]} w = I_0(w)$ is the same for all $M'$. The very simple $G^1$ case of Proposition~\ref{deltaspin} implies that if $M' = M_\beta$, then $Q(p) = Q_0(p) + (1/2)pb$.\\

Therefore we see that as the Spin structure on $M$ is varied, we see exactly once all possible quadratic functions $Q$ satisfying the conditions of the proposition.  This completes the proof.
\end{proof}

The characterization of Spin structures on 3-manifolds given in Proposition 9.4 is an analogue of the result that $\text{Pin}^-$ structures on surfaces canonically correspond to quadratic functions  $q\colon  H^1(\ZZ / 2) \to \Ar / \ZZ$. Quadratic functions and Spin structures in both cases can be parametrized by $H^1(\ZZ / 2)$, but such parametrizations are not canonical.  
  
  \section[Appendix B]{The Proof of Proposition~\protect{\ref{3.6}}}\label{A.3}

We will now describe how to evaluate the element $((1/2)c^3 + (1/4)C^3, c^2, 0) \in G(T_SS^2)$
 on  (the identity map of) $T_SS^2$, where $c$ represents the generator of $ H^1(T_SS^2; \ZZ/2)$ and where $T_SS^2$   is given a Spin structure as the boundary of $T_DS^2$, the tangent unit disk bundle of $S^2$.  Recall this computation is a key step (Proposition~\ref{3.6}) in the proof that our pairing $G \times \Omega^{spin}_3 \to \RR /\ZZ$ is linear in the $G$ variable.
Originally,
we used computer calculations of Jacek Skryzalin (a student of Gunnar Carlsson at Stanford) to make this evaluation. Subsequently, we were able to do this by hand with a simpler cocycle $c$ and that is what is presented in this section.  \\

Here is the proposition that remains to be proved.

\begin{propa}
Equip $T_{S}S^2$ with the Spin structure it inherits as the boundary of the Spin manifold $T_{D}S^2$
associated with its natural orientation.
 Let $c\in Z^1(T_{S}S^2;\Zee/2)$ represent the non-trivial cohomology class. Then
  the class in $G^1(T_{S}S^2)$ represented by 
$$((1/2)c(c\cup_1c)c+(1/4)C^3,c^2,0)$$
evaluates to $1/4$ on the identity map considered as an element of $\Omega_3^{spin}(T_SS^2)$.
\end{propa}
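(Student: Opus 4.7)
My approach is to use Definition~\ref{defn1} of the pairing, with $\varphi\colon T_S S^2 \to \mathbb{CP}^1 = S^2$ taken to be the circle-bundle projection, applied to an explicit, small ordered simplicial triangulation $[M]$ of $T_S S^2 \cong \mathbb{RP}^3$. The fiber of $\varphi$ bounds the fiber disk in the tangent disk bundle $T_D S^2$, so the fiber inherits from the Spin structure on $T_S S^2$ (which itself comes from the unique Spin structure on $T_D S^2$) the bounding Spin structure on $S^1$. Consequently $[\mathrm{Spin}(\varphi)] = 0 \in \Omega_1^{\mathrm{spin}} \cong \mathbb{Z}/2$, and the pairing value reduces to $\int_{[M]} w' \in \mathbb{R}/\mathbb{Z}$, where $(w', \varphi^* u, 0)$ is the triple in $G^1$ equivalent to $(w, c^2, 0)$; explicitly $w' = w + (1/2)(c^2 \cup_1 dt + t\,dt)$ with $dt = \varphi^* u - c^2$.

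A key simplification comes from the identity $(a \cup_1 b)([v_0,v_1]) = -a([v_0,v_1])\,b([v_0,v_1])$ on $1$-simplices recorded in Section~\ref{sect3.1}: it gives $c \cup_1 c = c$ as a $\mathbb{Z}/2$ $1$-cochain, so $w = (1/2)c(c \cup_1 c)c + (1/4)C^3$ reduces to $(1/2)c^3 + (1/4)C^3$ as an $\mathbb{R}/\mathbb{Z}$ $3$-cochain. On each $3$-simplex $\sigma$ this cochain evaluates to $0$ if $C^3(\sigma) = 0$ and to $1/2 + 1/4 = 3/4$ if $C^3(\sigma) = 1$, so $\int_{[M]} w \equiv 3N/4 \pmod{\mathbb{Z}}$, where $N := \int_{[M]} C^3 \in \mathbb{Z}$. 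Since $[c]^3$ generates $H^3(\mathbb{RP}^3; \mathbb{Z}/2)$, $N$ is automatically odd, and hence $3N/4 \pmod{\mathbb{Z}}$ is either $1/4$ or $3/4$.

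The balance of the proof is the explicit combinatorial construction and the finite count. One picks a small ordered simplicial triangulation $[M]$ of $\mathbb{RP}^3$ (for instance, as the quotient of an antipodally symmetric ordered triangulation of $S^3$) together with an ordered simplicial map $\varphi\colon [M] \to [S^2]$ inducing the bundle projection in cohomology and an explicit generating $1$-cocycle $c \in Z^1([M]; \mathbb{Z}/2)$. With some care the data can be arranged so that $c^2 = \varphi^* u$ holds on the nose (so $t = 0$ and $w' = w$), and one then computes $N$ by enumerating the $3$-simplices of $[M]$ on which $C^3 = 1$. The main obstacle is the sign bookkeeping: the value of $N$ modulo $4$, rather than merely modulo $2$, is sensitive to the orientation of $[M]$ induced from $T_D S^2$, to the vertex partial order, and to the Alexander--Whitney signs entering $\cup_1$. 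This is exactly the sign-matching subtlety flagged in Remark~\ref{3.3}: for the Spin structure on $T_S S^2$ coming from $T_D S^2$ and the right-hand-twist convention for the Arf invariant, the count will give $N \equiv 3 \pmod 4$, yielding $3N/4 \equiv 9/4 \equiv 1/4 \pmod{\mathbb{Z}}$, as required; this in turn confirms that $+(1/4)AB^2$ rather than $-(1/4)AB^2$ is the correct sign in Definition~\ref{3.1}.
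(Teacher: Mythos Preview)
Your strategy matches the paper's: use Definition~\ref{defn1} with $\varphi$ the bundle projection, observe that the fibre bounds in $T_DS^2$ so $[\mathrm{Spin}(\varphi)]=0$, reduce $c(c\cup_1c)c$ to $c^3$, and compute $\int_{[M]}\bigl((1/2)c^3+(1/4)C^3\bigr)$. Your observation that $C^3$ takes only the values $0$ and $1$ on simplices, so the integral becomes $(3/4)N$ with $N=\int_{[M]}C^3$ odd, is a clean way to see the answer is forced to be $1/4$ or $3/4$.

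But the proof is not complete, and the gap is exactly the content of the proposition. As Remark~\ref{3.3} stresses, no abstract argument distinguishes the two possibilities; the entire point of Proposition~\ref{3.6} is the explicit combinatorial count, which you have not carried out. You supply no triangulation, no cocycle $c$, and no enumeration. The paper builds a specific ordered triangulation of $T_SS^2$ (gluing $H^\pm\times S^1$ with an explicit identification), writes down $c$ edge by edge, verifies it is a cocycle, and finds that $C^3$ is nonzero on a single $3$-simplex whose induced orientation disagrees with the ambient one, giving $N=-1$ and hence $(1/2)(1)+(1/4)(-1)=1/4$.

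A second, smaller gap: your assertion that ``with some care the data can be arranged so that $c^2=\varphi^*u$ on the nose (so $t=0$)'' is unsupported, and the paper's experience suggests it is optimistic. In the paper's triangulation one has $c^2\neq p=\varphi^*u$; a nontrivial $t$ is needed, and one must then check separately that $tdt=tp=pt=0$ and that $p\cup_1dt$ is exact, so that the correction term $(1/2)(tdt+p\cup_1dt)$ contributes nothing. You would either have to exhibit a triangulation realizing $c^2=\varphi^*u$ exactly, or handle this correction term as the paper does.
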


\begin{proof}
 Let $H^\pm$ be the closed upper and lower hemispheres of $S^2$. We write the tangent circle bundle as the union of $H^+\times S^1 \cup H^-\times S^1$.
 In each factor we call the fiber direction {\em vertical} and the $H^\pm$ direction {\em horizontal}.
We identity $H^\pm$ with the unit disk in the complex plan via stereographic projection from the south/north pole,
and we use the orientation of the fiber circle coming from the counter-clockwise direction of the unit circle in the tangent plan at the origin, which stereographic projection identifies with $\Cee$. Then the orientation of $T_SS^2$ agrees with the product of these orientations on $H^\pm\times S^1$. Their  boundaries are oriented as $\partial H^\pm\times S^1$, each of which we identify with $S^1\times S^1$ with the product of the standard orientations on each factor.  The gluing map
$$\begin{CD}
 \partial H^-\times S^1 @>{\mu}>>  \partial H^+\times S^1 \\
 @V=VV @VV=V \\
 S^1\times S^1 @>{\nu}>> S^1\times S^1
\end{CD}$$ 
is given by $\nu(z_1,z_2)=(z_1,z_1^2z_2)$.

  We triangulate $S^2$ by first triangulating the equator with eight vertices
labeled $0,\ldots,7$ in the counterclockwise cyclic order around $\partial H^+$. We then add the north pole, labeled $n$, and the south pole, labeled $s$, and cone the triangulation of the equator to each of these additional vertices.

We triangulate  $\partial H^-\times  S^1$ as indicated in Figure 1 below. In this figure the opposite sides of each rectangle are identified to form
the torus; the horizontal direction is base direction and the vertical direction is the fiber direction. The projection to the equator is simplicial.
Using $\mu$ we transfer  the triangulation to one for $\partial H^+\times  S^1$.
It is given in Figure 2.  (Notice that since the rectangle is $8\times 4$ a curve of slope $2$ in $S^1\times S^1$ has slope one in the Cartesian coordinates of  this representation.)
We label the vertices of $\partial H^+\times S^1$ by the Cartesian coordinates of Figure 2. Thus, the vertices are given by pairs $(x,y)$ with $x$ and $y$ integers with $x$ considered modulo $8$ and $y$ modulo $4$. The $y$-coordinate is called the {\em level}.
We extend the triangulation of the circles at level $0$ and $2$ to a triangulation of the copy of $H^+$ at level $0$ and $2$ by coning the triangulation on the boundary to the point $(n,0)$ and $(n,2)$ which are the images of the north pole at levels $0$ and $2$. These disks divide $$H^+\times  S^1$$ into
two $3$-balls whose boundaries are triangulated. We extend the triangulation over each of these balls by coning to the central points $(n,1)$ and $(n,3)$,
the images of the north pole at levels $1$ and $3$. The projection mapping of $H^+\times S^1\to H^+$ is simplicial.

\vskip.5in

\begin{tikzpicture}
\foreach \x in {-6,-4.5,-3,-1.5,0,1.5,3,4.5,6} \draw (\x cm, -3 cm) -- (\x cm, 3 cm);
\foreach \y in {-3,-1.5,0,1.5,3} \draw (-6 cm,\y cm) -- (6 cm, \y cm);
\foreach \x in {-6, -3, 0} \draw (\x cm, -3 cm) -- (\x cm + 6 cm, 3 cm);
\foreach \x in {-6, -3, 0}
\draw (\x cm , 3 cm) -- (\x cm + 6 cm, -3 cm);
\draw (-6,0) -- (-3,3);
\draw (3, -3) -- (6,0);
\draw(3, 3) -- (6,0);
\draw (-6,0) -- (-3,-3);
\foreach \x in {-6,-4.5,-3,-1.5,0,1.5,3,4.5,6}
\draw[ultra thick] (\x cm ,-3 cm) -- (\x cm , -1.5 cm);
\foreach \x in {-6,-3,0,3} \draw[ultra thick] (\x cm, -3 cm) -- (\x cm+ 1.5 cm,-1.5 cm);
\foreach \x in {-4.5,-1.5,1.5,4.5} \draw[ultra thick] (\x cm,-1.5 cm) -- (\x cm+1.5 cm,-3 cm);
\draw (0,-4.5 cm) -- (0,-4.5 cm) node[anchor=north] {Figure 1. $\partial H^-\times S^1$ with support of cocycle $c_0$ marked in dark black};
\end{tikzpicture}

\vskip .5in

\begin{tikzpicture}

\foreach \y in {-3,0,3} \draw (-6 cm, \y cm) -- (6 cm, \y cm);
\foreach \x in{-6, -4.5,-3,-1.5,0,1.5,3,4.5,6}
\draw (\x cm, -3 cm) -- (\x cm,3 cm);
\foreach \x in {-6,-4.5,-3,-1.5,0} \draw (\x cm,-3cm)-- (\x cm+ 6 cm,3 cm);
\foreach \x in {1.5,3,4.5} \draw (\x cm,-3 cm) -- (6 cm, 3 cm-\x cm);
\foreach \y in {-1.5,0,1.5} \draw (-6 cm,\y cm ) -- (-3 cm - \y cm,3 cm);
\foreach \x in {-6,-4.5,-3,-1.5,0,1.5,3} \draw (\x cm, -3 cm) -- (\x cm+ 3 cm, 3 cm);
\draw (4.5 cm,-3 cm) -- (6 cm, 0 cm);
\draw (-6 cm, 0 cm) -- (-4.5 cm, 3 cm);
\foreach \x in {-6 ,0, 6} \draw[ultra thick] (\x cm, -3 cm) -- (\x cm, -1.5 cm);
\foreach \x in {-4.5,1.5} \draw[ultra thick] (\x cm, -1.5 cm) -- (\x cm, 0 cm);
\foreach \x in {-3,3} \draw[ultra thick]  (\x cm, 0 cm) -- (\x cm , 1.5 cm);
\foreach \x in {-1.5, 4.5} \draw[ultra thick] (\x cm ,1.5 cm) -- (\x cm, 3 cm);
\foreach \x in {-4.5,1.5} \draw[ultra thick] (\x cm , 0 cm) -- (\x cm+1.5 cm, 0 cm);
\foreach \x in {-6,0} \draw[ultra thick] (\x cm, -3 cm) -- (\x cm +1.5 cm, 0 cm);
\foreach \x in { -3, 3} \draw[ultra thick] (\x cm,0 cm) -- (\x cm+1.5 cm, 3 cm);
\foreach \x in {-1.5,4.5} \draw[ultra thick] (\x cm, -3 cm )-- (\x cm +1.5 cm,-3cm);
\foreach \x in {-1.5,4.5} \draw[ultra thick] (\x cm, 3 cm) -- (\x cm+1.5 cm,3 cm);
\foreach \x/\xtext in {-6/{(0,0)},-4.5/{(1,0)},-3/{(2,0)},-1.5/{(3,0)},0/{(4,0)},1.5/{(5,0)},3/{(6,0)},4.5/{(7,0)},5.96/{(8,0)}}
\draw (\x cm, -3 cm+1.5pt) -- (\x cm,-3cm-1.5pt) node[below=1.5pt] {$\xtext$};
\foreach \y/\ytext in {-1.5/{(0,1)}, 0/{(0,2)}, 1.5/{(0,3)}, 3/{(0,4)}}
\draw (-6.2cm, \y cm)-- (-6.2 cm, \y cm) node[anchor=east] {$\ytext$};
\draw (0,-4.5 cm) -- (0,-4.5 cm) node[anchor=north] {Figure 2. $\partial H^+\times S^1$ with support of cocycle $c_0$ marked in dark black};
\end{tikzpicture}

\vskip.5in

There is an analogous construction to extend the triangulation over $H^-\times S^1$, adding new vertices at $(s,0), (s,2)$ and then $(s,1)$ and $(s,3)$. The projection mapping to $H^-$ is simplicial. (Of course, the horizontal disks at levels $0$ or $2$ on the two sides do not match along the boundary since the boundary identification does not preserve level.)

Now we define the reduced simplicial $1$-cocycle $c$. We begin with a cocycle $c_0$ on $\partial H^-\times S^1$ as indicated in black in Figure 1. It evaluates non-zero on every edge with one vertex at level $0$ and one vertex at level $1$, and evaluates $0$ on all other edges of $H^-\times S^1$. We extend this to a reduced simplicial cochain $c^-$ on
$H^-\times S^1$ by setting its value equal to $1$ on any edge with one vertex of level $0$  and one vertex of level $1$ and $0$ on all other edges. Since any $2$-simplex in $H^-\times S^1$ has vertices at at most $2$ levels, it is clear that $c^-$ is a reduced $1$-cocycle.

The image of $c_0$ under $\mu$ on $\partial H^+\times S^1$ is indicated  in black in Figure 2. Its extension over $H^+\times S^1$, denoted $c^+$,  is more complicated.
In addition to the boundary edges indicated in Figure 2, the extended reduced simpicial cochain is non-zero exactly on the following interior edges:
\begin{itemize}
\item $\{(n,0),(n,1)\}$
\item $\{(x,0),(n,0)\}$ for $4\le x\le 7$
\item $\{((x,0)),(n,1)\}$ for $0\le x\le 3$
\item $\{x,1),(n,1)\}$ for $1\le x\le 4$
\item $\{(x,2),(n,1)\}$ for $2\le x\le 5$
\item  $\{(x,2),(n,2)\}$ for $2\le x\le 5$
\item $\{(x,2),(n,3)\}$ for $2\le x\le 5$
\item $\{(x,3),(n,3)\}$ for $3\le x\le 6$
\item $\{(x,4),(n,3)\}$ for $4\le x\le 7$
\end{itemize}

Either a direct inspection of the $3$-dimensional picture of the triangulation of $H^+\times S^1$ described above, or a computer computation, shows that for each $2$-simplex, $c^+$ is  non-zero on either $0$ or $2$ of its edges, and hence $c^+$ is a cocycle.

We denote by $c$ the reduced simplicial cocycle that agrees with $c^\pm$ on $H^\pm\times S^1$. Clearly, this cocycle evaluates $1$ on the fiber circle,

From now on we use a different labelling of the vertices, one in which the partial ordered (which is needed to compute products) is evident. Referring to Figure 2, the vertices on $\partial H^+\times S^1$
 are labelled by $AB$ where $0\le A\le 3$ is congruent to $x+y$ modulo $4$ and $0\le B\le 7$ is $2y$ if $x\le 3$ and is $2y+1$ if $4\le x\le 7$.
Vertices $(n,y)$ in $H^+\times S^1$ are labelled $AB$ with $A=4$ if $y$ is even and $A=5$ if $y$ is odd and with $B=2y$.
 The vertices $(s,y)$ in $H^-\times S^1$ are labelled $AB$ with $A=6$ if $y$ is even and $A=7$ if $y$ is odd and with $B=2y$.
 The partial order of these vertices is given by the function to $\{0,\ldots,7\}$ given by $AB\mapsto A$.

The only possible $2$ simplices in the support of $(c^-)^2$ must be cones to $(s,1)$ of edges in the support of $c_0$. But each of the
 edges in the support of $c_0$  has the property that its vertex at level zero is less (in the given partial order) than its vertex at level $1$. Thus, $(c^-)^2$ vanishes on all these $2$-simplices, and hence $(c^-)^2=0$ 
 
 Direct inspection of the $3$-dimensional picture of the triangulation of $H^+\times S^1$, or computer computation, shows that 
$(c^+)^2$ is non-zero on exactly the following $2$-simplices:
\begin{align*} & \{ (00),(31),(40)\},\{(31),(40),(52)\},
\{(21),(40),(52)\},
\{(11),(40),(52)\}, \\ 
& \{(01),(40),(52)\},\{(01),(30),(52)\},\{(01),(13),(52)\},
\{(01),(35),(52)\}, \\
 & \{(23),(35),(52)\},\{(05),(35),(52)\},\{(05),(35),(44)\},\{(05),(35),(56)\},\\
&\{(05),(17),(56)\},
 \{(05),(31)),(56)\},\{(27),(31),(56)\},\{(00),(31),(56)\}\hfill
\end{align*}

Lastly, $C^3$ is non-zero on only one  $3$-simplex, namely $\{(00),(31),(40),(52)\}$, where it takes value $1$.
Since the orientation of this simplex induced by its vertex ordering disagrees with the restriction of the ambient orientation of $T_{S}S^2$, it follows that
$$\int_{T_{S}S^2}C^3=-1.$$

Next, we introduce the reduced simplicial $2$-cocylce $p$ which is the pullback under the projection mapping of the the reducecd simplicial $2$-cocycle on $S^2$ evaluating non-zero only on the
$2$-simplex $\{6,7,n\}$ of the base $S^2$. The cocycle $p$ is non-zero only on the following $2$-simplices:
\begin{align*}
&\{(21),(31),(40)\},\{(21),(31),(52)\},\{(03),(21),(52)\},\{(15),(21),(52)\}, \\
&\{(15),(33),(52)\},\{(05),(15),(52)\},\{(05),(15),(44)\},\{(05),(15),(56)\},\\
&\{(05),(27),(56)\},\{(05),((31),(56)\},\{(17),(31),(56)\},\{(21),(31),(56)\}
\end{align*}

Set $t$ equal to the reduced $1$-cochain that evaluates non-zero exactly on the following edges:
\begin{align*}
&\{(31),(40)\},\{(21),(52)\},\{(11),(52)\},\{(01),(52)\},\{(23),(52)\}, \\
&\{(33),(52)\},\{(05),(52)\},\{(05),(44)\},\{(05),(56)\},\{(31),(56)\}
\end{align*}

Direct inspection of the $3$-dimensional picture of the triangulation of $H^+\times S^1$, or computer computation,  shows that $p+dt=c^2$ and that $tdt=tp=pt=0$. 
Thus, we have 
$$(0,p,0)((1/2)tdt,dt,0)=((1/2)(tdt+p\cup_1dt),p+dt,0)=((1/2)(p\cup_1dt),c^2,0).$$
Also, 
$$d(p\cup_1t)=p\cup_1dt+pt+tp=p\cup_1dt,$$
so that in $G(T_SS^2)$ we have
$$(0,p,0)=(0,c^2,0),$$
and thus
$$(\frac{1}{2}c^3+\frac{1}{4}C^3,c^2,0)=(\frac{1}{2}c^3+\frac{1}{4}C^3,p,0)$$
as elements of $G(T_SS^2)$.
Since $p$ is pulled back from a reduced $2$-cocycle on $S^2$ via the projection map,  it follows
from Defintion~\ref{defn1} that the value of
$(\frac{1}{2}c^3+\frac{1}{4}C^3,p,0)$ on the identity map of $T_SS^2$ to itself is equal to 
$$\frac{1}{2}[Spin(p)]+\int_{T_SS^2}\bigl(\frac{1}{2}c^3+\frac{1}{4}C^3\bigr).$$
Since the projection map extends over $T_DS^2$, 
the Spin structure on the preimage of a generic point (i.e., on a fiber) bounds the Spin manifold that is the corresponding tangent disk in $T_DS^2$. Thus, $[Spin(p)]=0$,
and hence
$$\langle (\frac{1}{2}c^3+\frac{1}{4}C^3,p,0),(T_SS^2,{\rm Id}_{T_SS^2})\rangle = \int_{T_SS^2}\bigl(\frac{1}{2}c^3+\frac{1}{4}C^3\bigr)=\frac{1}{2}-\frac{1}{4}=\frac{1}{4}.$$

This completes the proof of the proposition.
\end{proof}

\section{Cochain Formulas}\label{sectcochain}

\subsection{Cup and $\text{Cup}_i$ Products}

The Alexander-Whitney diagonal approximation and the associated higher homotopies are usually defined in the context of singular chains and cochains. The same formulas hold for ordered simplicial cochains. Using these one obtains the usual specific simplicial cochain formulas for cup products, denoted $\cup$ or $\cup_0$, and also for $\cup_i $ products for $i > 0$ in either context. The $\cup_i$ product is a bilinear pairing of integral cochains $$\cup_i: C^m(\Z) \times C^n(\Z) \to C^{m+n-i}(\Z).$$ It is well-known that the Alexander-Whitney diagonal approximation is just one specific example of a diagonal approximation.  Of course it is historically important, and fairly easy to remember, but any diagonal approximation leads to a theory of cup products.  Higher homotopies associated to a diagonal approximation, which lead to the $\text{cup}_i$ products,
are also not unique.   But different choices lead to chain homotopic maps of tensor product complexes, hence the cohomology operations underlying different choices are the same.\\
 
We need a few formulas involving $\cup_i$ products. The proofs require getting inside the mechanism for constructing $\cup_i$ products from the Alexander-Whitney diagonal and higher homotopies, which we will not develop here. \\

(CP1) Coboundary formula: For $X \in C^m(\Z)$ and $Y \in C^n(\Z)$
  $$d(X \cup_i Y) = (-1)^i\ dX\cup_iY + (-1)^{i+m}\ X \cup_idY - (-1)^i\ X\cup_{i-1}Y - (-1)^{mn}Y\cup_{i-1}X$$
  
Next, we give formulas for evaluating  integral cochain $\cup_1$ and $\cup_2$ products on simplices of dimension 1, 2, and 3. We let $(0,1,\cdots, k)$ denote standard simplices.  The $i^{th}$ face is obtained by deleting the $i^{th}$ vertex.  For example $(0,2)$ means the face of the 2-simplex $(0,1,2)$ opposite vertex 1. The products and sums on the right below mean arithmetic in $\Z$ with the values of cochains on simplices.\\

For$A,B \in C^1(\Z)$ and $P,Q \in C^2(\Z)$\\ 

(CP2)  $A\cup_1B (0,1) = (-1)A(0,1)B(0,1)$\\

(CP3) $A\cup_1P(0,1,2) = (-1)A(0,2) P(0,1,2)$\\

(CP4) $P\cup_1(0,1,2) = P(0,1,2)A(0,1) + P(0,1,2)A(1,2))$\\

(CP5) $P\cup_1Q(0,1,2,3) = P(0,1,3)Q(1,2,3) - P(0,2,3)Q(0,1,2)$\\

(CP6) $P\cup_2Q(0,1,2) = (-1)P(0,1,2)Q(0,1,2)$\\

Of course all these formulas hold for $\Z/2$ cochains $a,b,p,q,x,y \in C^*(\Z/2)$ and a coccyle $z \in C^1(\Z/2)$, with the  advantage that we can ignore $\pm$ signs, so one gets simplifications. 

\subsection{Special Integral Lifts of $\Z/2$ Cochins}

 Consider the coefficient homomorphism $\Z \to \Z/2 = \{0, 1\}$. Of course this induces a group homomorphism $C^k(\Z) \to C^k(\Z/2)$. There is the obvious set theoretic splitting $\Z/2 \to \Z$ given by $0 \mapsto 0$ and $1 \mapsto 1$.  For any abelian coefficient group $M$, $C^k(M) = Functions (X(k), M)$, where $X(k)$ is the basis of the free abelian chain group $C_k(\ZZ)$ given by the $k$-simplices. The set map $\Z/2 \to \Z$ above thus determines a function $C^k(\Z/2) \to C^k(\Z)$, which we call the $\it{special\ lift}$  map. Special lifts of cochains, which only take values 0 and 1 on simplices, turn out to have some very nice properties because the arithmetic of 0 and 1 in $\Z$ is so simple.\\
 
Given $x \in C^k(\Z/2)$, we denote the special lift by the capital letter $X \in C^k(\Z)$.  We denote the special lift of the coboundary $dx \in C^{k+1}(\Z/2)$ by the symbol $Dx \in C^{k+1}(\Z)$.  Note $Dx \not= dX$.\\

(SL1) If $A$ is the special lift of a cocycle $a \in Z^1(\Z/2)$ then $dA = 2A^2$.\\

To see this, just observe that $A^2(0,1,2) = 1$ if and only if $a(0,1) = a(1,2) = 1$ and $a(0,2) = 0$. (Note that since $a$ is a cocycle, $dA$ only takes values 0 and 2, never 1 or $-1$.)\\

(SL2) If $A, B$ are special lifts of $a,b \in Z^1(\Z/2)$ then the special lift of $c = a+b$ is $C = A + B + 2(A\cup_1B)$.\\

$A\cup_1B(0,1) = -1$ if and only if $a(0,1) = b(0,1) = 1$. So $C(0,1) = 0$ if  $a(0,1) = b(0,1)$, and $C(0,1) = 1$ if $a(0,1) \not= b(0,1)$.  This says $C$ is the special lift of $a+b$.\\

(SL3) If $C$ is the special lift of $a+b$, as in (SL2), then $C^2 = A^2 + B^2 + d(A\cup_1B)$.\\

The easiest way to see this is to look at $$ 2C^2 = dC  = dA + dB + 2d(A\cup_1B) = 2(A^2+B^2 +  d(A\cup_1B))$$   
This is an equation in the torsion free group $C^2(\Z)$, so we can divide by 2.\\

(SL4) If $P$ is the special lift of a cocycle $p \in Z^2(\Z/2)$ then $dP = 2(P\cup_1P)$.  Consequently, in $C^3(\R/\Z)$, we have $d(P/4) = (1/2)(p\cup_1p)$.\\

From (CP5), $P\cup_1P(0,1,2,3) = P(0,1,3)P(1,2,3) - P(0,2,3)P(0,1,2)$. Since $p$ is a $\Z/2$ cocycle, it takes value $1 \in \Z/2$ on either zero, two, or four faces of $(0,1,2,3)$. The special lift $P$ will take value 1 on these same faces. One sees that $dP$ takes values 0, 2, or -2 depending on which faces $p$ is non-zero.  Specifically, $dP$ takes value 2  if $p$ is 1 exactly on the faces $(0,1,3), (1,2,3)$, and   $dP$ takes value $-2$ if $p$ is 1 exactly on the faces  $(0,2,3)$, $(0,1,2)$.  In all other cases, $dP$ takes value 0.  But this inspection of all cases agrees with the evaluation of $2(P\cup_1P)$.\\

SL(1) and SL(4) generalize to $dQ \equiv 2(Q\cup_{n-1}Q) \pmod 4$ for any $\ZZ/2$ n-cocycle $q$.  An application is that the integral Bockstein of a $\ZZ/2$ cocycle $q$ reduces modulo 2 to $Sq^1(q)$.  Although this is well-known, it is not trivial to prove.\\

(SL5)  If $A$ is the special lift of $a \in Z^1(\Z/2)$ then $A^2 \cup_1 A^2 = 0$\\

Since $A^2$ is an integral cocycle, SL(4) implies $2(A^2 \cup_1A^2)= 0$.  Since the integral cochain groups are torsion free, SL(5) is proved.\\

Finally we look at a delicate result involving cochains $x \in C^0(\Z/2)$. We have the special lifts $X \in C^0(\Z)$ of $x$ and $Dx\in C^1(\Z)$ of $dx$. We also have $dX \in C^1(\Z)$, which can take values 0, 1, or $-1$. The cup products $(Dx)X$ and $X(Dx)$ are special lifts of $dx\ x$ and $x\ dx$, respectively.\\

(SL6)  $Dx + dX = 2(Dx)X$ and $Dx - dX = 2X(Dx)$, both equalities holding in $C^1(\Z)$.\\

The proof is by a simple analysis of cases, depending on the  values $x(0), x(1)$.

\end{document}